\documentclass[12pt,reqno]{amsart}
\usepackage[margin=1in]{geometry}
\usepackage{graphicx}
\usepackage{amsmath,amsthm,amsfonts,mathrsfs,amssymb,float,color}
\usepackage{graphicx}
\usepackage{enumerate,enumitem}
\usepackage{textcomp}
\usepackage{verbatim}
\usepackage{ upgreek }
\usepackage[T1]{fontenc}
\usepackage[utf8]{inputenc}
\usepackage{hyperref}
\hypersetup{colorlinks=true,citecolor=red,linkcolor=blue}
\usepackage{epsfig,subfigure,fancybox,balance}
\usepackage{diagbox}
\usepackage{youngtab}
\usepackage[numbers]{natbib}
\usepackage{ dsfont }
\usepackage{enumitem}
\newcommand*{\rom}[1]{\expandafter\@slowromancap\romannumeral #1@}

\newcommand{\beq}[1]{\begin{equation} \label{#1}}
\newcommand{\eeq}{\end{equation}}
\newcommand{\bea}{\bed\begin{array}{rl}}
\newcommand{\eea}{\end{array}\eed}
\newcommand{\bed}{\begin{displaymath}}
\newcommand{\eed}{\end{displaymath}}
\newcommand{\barray}{\begin{array}{ll}}
\newcommand{\earray}{\end{array}}
\newcommand{\disp}{\displaystyle}
\newcommand{\ad}{&\!\disp}
\newcommand{\aad}{&\disp}
\newcommand{\beqa}[1]{\begin{equation}\label{#1}\barray}
\newcommand{\eeqa}{\earray\end{equation}}
\newcommand{\al}{\alpha}
\newcommand{\e}{\varepsilon}

\newcommand{\la}{\lambda}

\newcommand{\sg}{\sigma}
\newcommand{\Sg}{\Sigma}

\newcommand{\dl}{\delta}
\newcommand{\Dl}{\Delta}
\newcommand{\cd}{(\cdot)}

\newcommand{\sqe}{\sqrt{\e}}

\def\phi{\varphi}
\def\indi{{\bf 1}}
\def\half{\frac{1}{2}}

\newcommand{\CA}{{\mathcal A}}
\newcommand{\CF}{{\mathcal F}}

\newcommand{\CU}{\mathcal{U}}
\newcommand{\CX}{\mathcal{X}}

\newcommand{\CL}{\mathcal{L}}
\newcommand{\CP}{\mathcal{P}}
\newcommand{\CE}{\mathcal{E}}
\newcommand{\CK}{\mathcal{K}}

\newcommand{\CS}{\mathcal{S}}
\newcommand{\CV}{\mathcal{V}}

\newcommand{\CH}{\mathcal{H}}
\newcommand{\CN}{\mathcal{N}}

\newcommand{\EE}{{\mathbb E}}
\newcommand{\PP}{{\mathbb P}}

\newcommand{\NN}{{\mathbb N}}
\newcommand{\rr}{{\mathbb R}}

\newcommand{\TT}{{\mathbb T}}
\newcommand{\ZZ}{{\mathbb Z}}

\newcommand{\MM}{{\mathbb M}}

\newcommand{\lbar}{\overline}
\newcommand{\wdt}{\widetilde}
\newcommand{\wdh}{\widehat}

\newcommand{\qv}[1]{\langle #1 \rangle}
\newcommand{\bqv}[1]{\big\langle #1 \big\rangle}

\numberwithin{equation}{section}

\newtheorem{thm}{Theorem}[section]
\newtheorem{lem}[thm]{Lemma}

\newtheorem{rem}[thm]{Remark}

\newtheorem{ass}[thm]{Assumption}

\newcommand{\thmref}[1]{Theorem~{\rm \ref{#1}}}
\newcommand{\lemref}[1]{Lemma~{\rm \ref{#1}}}

\newcommand{\remref}[1]{Remark~{\rm \ref{#1}}}

\newcommand{\figref}[1]{Figure~{\rm \ref{#1}}}
\newcommand{\assmref}[1]{Assumption~{\rm \ref{#1}}}
\newcommand{\secref}[1]{Section~{\rm \ref{#1}}}

\newcommand{\appref}[1]{Appendix~{\rm \ref{#1}}}

\definecolor{gray}{rgb}{0.75, 0.75, 0.75}

\newcommand{\gronwall}{Gr\"{o}nwall }

\begin{document}
\title[SK approximation meets Khasminskii averaging principle]{
Smoluchowski-Kramers Approximation Meets Khasminskii Averaging Principles in Nonequilibrium Random Environments I
}
\author{Hongjiang Qian}
\address{Department of Mathematics and Statistics, Auburn University, Auburn, AL, 36832}
\email{hjqian.math@gmail.com}

\subjclass[2020]{60H10, 60F99, 60F10, 60J27, 60K37.}
\keywords{Smoluchowski-Kramers approximation, Khasminskii averaging principle, Langevin equations, slow-fast second-order stochastic differential equations, Poisson equations, Neumann series expansion, nonequilibrium random environment.}

\begin{abstract}
This work establishes a simultaneous Smoluchowski-Kramers approximation and Khasminskii averaging principle for a class of second-order stochastic differential equations (SDEs) in nonequilibrium random environments. The system describes the motion of a particle of mass $m>0$ subject to external forces, friction, and noise, all of which depend on a fluctuating environment  such as a stochastic heat bath. The environment is modeled by a fast-varying first-order SDE, where a parameter $0<\e\ll 1$ encodes the time-scale separation. Under the scaling $m=\e^2$, the slow process converges in probability to an effective dynamics with averaged drift and noise-induced coefficients. Our analysis utilizes a pathwise integration-by-parts formula and Poisson equations associated with the fast dynamics. Finally, numerical experiments are provided for demonstration.
\end{abstract}
\maketitle

\section{Introduction}
Let $\CX \subseteq \rr^{d_1}$ be a non-empty open set and $(\Omega, \CF,\{\CF_t\}, \PP)$ be a filtered probability space satisfying the usual condition. By Newton's second law, the motion of a particle with mass $m$ evolving in $\CX$, subject to external forces and a friction term proportional to its velocity, can be modeled by the following second-order stochastic differential equations (SDEs):
\beq{lang}
m \ddot{x}_t^{m} = b(x_t^m)- \la(x_t^m)\dot{x}_t^m + \sg(x_t^m)\dot{W}_t,\quad x_0^\e =x_0 \in \CX, \; \dot{x}^\e_0 = v_0 \in \rr^{d_1},
\eeq
where $b,\la$, and $\sg$ are suitable measurable functions. The $W_t$ is an $n_1$-dimensional standard Brownian motion. The above second-order SDE provides a stochastic model for a broad class of physical and engineering systems, including the motion of colloidal particles in fluids \cite{Nel20}, turbulence diffusion and stochastic accelerations model \cite{KP79,KP80}, and tracking problems such as camera-based  object tracking \cite{Pap10}. Moreover, many classical second-order differential equations arising in physics and engineering, such as Airy's equation in quantum mechanic and optics, the Duffing equation in nonlinear oscillations, or the Li\'{e}nard and Rayleigh equations in electrical circuits and control system, lead to second-order SDEs of the form \eqref{lang} when subjected to random perturbations.

A fundamental question associated with the second-order SDE \eqref{lang} concerns the small-mass limit: \textit{Can the second-order stochastic dynamics be effectively approximated by first-order stochastic models as $m \to 0$?} Under suitable assumptions, the answer is affirmative. The limiting equation is known as the \textit{Smoluchowski-Kramers approximation} (or \textit{small-mass limit}), dating back to the seminal works of Smoluchowski \cite{Smo16} and Kramers \cite{Kra40}. This approximation provides a rigorous link between the Newtonian and overdamped stochastic dynamics, describing the asymptotic transition from second-order to first-order stochastic differential equations as the particle mass vanishes. The small-mass limit captures the effective motion of microscopic particles in viscous environment and plays a central role in nonequilibrium statistical mechanics. We refer to \cite{CF05,FH11,Fre04,HHV16,HMVW15} and reference therein for comprehensive treatment and further developments.

In this paper, we investigate the Smoluchowski-Kramers approximation of such second-order stochastic systems in \textit{nonequilibrium random environment}, which is modeled by a fast-varying first-order SDE. More precisely, we study the  small-mass limit of the following \textit{fully-coupled slow-fast} stochastic system:
\beq{fast-slow}\left\{\barray
m \ddot{x}_t^m  & = b(x_t^m, y_t^\e) -\la(x_t^m, y_t^\e) \dot{x}_t^m+ \sg(x_t^m, y_t^\e)\dot{W_t}, \quad x_0^m = x_0\in \CX,\, \dot{x}_0^m=v_0 \in \rr^{d_1}, \\
\disp dy_t^\e & = \disp \frac{1}{\e} f(x_t^\e, y_t^\e)dt + \frac{1}{\sqe} g(x_t^\e, y_t^\e) d B_t,\quad y_0^\e= y_0\in \rr^{d_2}, 
\earray\right.\eeq
where $0<\e < 1$ is a small parameter encoding the time-scale separation.
Here $b: \CX \times \rr^{d_2} \to \rr^{d_1}$, $\la: \CX \times \rr^{d_2} \to \rr^{d_1\times d_1}$, and $\sg: \CX \times \rr^{d_2} \to \rr^{d_1 \times n_1}$ denote the force field, friction, and diffusion coefficient for the slow process $x_t^m$, respectively. The functions $f: \rr^{d_1} \times \rr^{d_2} \to \rr^{d_2}$ and $g: \rr^{d_1} \times \rr^{d_2} \to \rr^{d_2 \times n_2}$ are the drift and diffusion coefficients for the fast process $y_t^\e$. The processes $W_t$ and $B_t$ are independent standard Brownian motions of dimensions $n_1$ and $n_2$, respectively, defined on the probability space $(\Omega, \CF,\{\CF_t\}, \PP)$. Throughout the paper, we focus on $\CX=\rr^{d_1}$, in contrast to the bounded setting explored in \cite{HMVW15}.

For the investigation of particle dynamics in complex random media, it is essential to introduce a random environment variable $y_t^\e$ to encode microscopic fluctuations and thermodynamic heterogeneity. A representative example from statistical physics is the motion of Brownian particles in a fluid whose thermodynamic properties, such as temperature, viscosity, and chemical composition, fluctuate rapidly due to molecular-scale interactions. In such nonequilibrium settings, the environment evolves on a time scale much faster than that of the particle's position. Due to the presence of the scaling factor $1/\e$, the fast component $y_t^\e$ captures rapidly evolving environmental degrees of freedom and induces an intrinsic multiscale structure. This separation of time scales furnishes a natural setting for averaging and homogenization, thereby connecting microscopic stochastic fluctuations with effective macroscopic transport dynamics. As a concrete example, consider one dimensional Brownian particles in \textit{stochastic} heat baths with
$\la(x,y)=\kappa_B\TT(y)/D(x)$ and $\sg(x,y)= \sqrt{2}\kappa_B \TT(y)/\sqrt{D(x)}$ in \eqref{fast-slow}. Here $\kappa_B$ is the Boltzmann constant and $D(x)$ is a hydrodynamic diffusion coefficient due to effects of particle wall interactions. The $\TT(y)$ denotes the instantaneous temperature of the surrounding heat bath, which is essentially not assumed to be in thermal equilibrium. This formulation extends the classical Langevin dynamics in equilibrium thermal environment to nonequilibrium surroundings and thus leads to fast-slow second-order stochastic dynamics. A detailed discussion and numerical experiment are presented in \secref{sec:BM-shb}. For more practical applications, we refer to the examples in \cite{HHV16,HMVW15}.

The principal regime of interest in this work corresponds to the scaling
\beq{scale}
m=\e^2
\eeq
under which the inertial relaxation and the environmental fluctuations evolve on comparable asymptotic time scales. This coupling naturally links the small-mass limit with the Khasminskii averaging principle and eventually leads to averaged drift and noised-induced coefficients. Our goal is to understand how the fast-varying random environment influences the Smoluchowski-Kramers approximation when both effects occur simultaneously. While one may alternatively treat $m$ and $\e$ as independent parameters by first letting $m\to 0$ to obtain the small-mass limit and then letting $\e\to 0$ to obtain the averaging principle. The present work focuses on the joint limit $m=m(\e)\to 0$, with particular emphasis on the scaling \eqref{scale}. A related problem was studied in \cite{HLL26} under the setting $\sigma = g = 1$ and $\epsilon = m^2$, where strong convergence was established. In contrast, our system is fully coupled, and our analysis focuses on convergence in probability.

Heuristically, since $m=\e^2$ tends to zero faster than $\e$ as $\e\to 0$, the dynamics $x_t^m$ can be approximated by a first-order slow-fast stochastic system:
\beq{wdt-x}\left\{\barray
d\wdt{x}_t^\e = [\la^{-1}(\wdt{x}_t^\e, \wdt y_t^\e)b(\wdt{x}_t^\e, \wdt y_t^\e) + S(\wdt{x}_t^\e, \wdt y_t^\e)]dt + \la^{-1}(\wdt{x}_t^\e, y_t^\e)\sg(\wdt{x}_t^\e, y_t^\e) dW_t,\quad \wdt x_0^\e=x_0 \\
\disp d\wdt y^\e = \frac{1}{\e} f(\wdt x_t^\e, \wdt y_t^\e)dt + \frac{1}{\sqe} g(\wdt x_t^\e, \wdt y_t^\e) dB_t, \quad \wdt y_0^\e = y_0. 
\earray\right.\eeq
Here $\la^{-1}$ denotes the inverse of the matrix $\la$, whose existence  is assumed throughout the paper.  
We denote by $\wdt x^\e_t$ the solution of \eqref{wdt-x}, emphasizing its dependence on the fast process $\wdt y_t^\e$ and hence on the parameter $\e$. The above SDE can be viewed as the classical Smoluchowski-Kramers approximation in \cite{HMVW15} when the random environment $y_t^\e$ is frozen as a parameter. Importantly, $S(x,y)$ denotes the \textit{noise-induced drift} arising from state-dependent friction, defined by
\beq{S-xy}
S_i(x,y): = \frac{\partial}{\partial x_\ell}\big[\la^{-1}_{ij}(x,y)\big] J_{j\ell}(x,y),
\eeq
where $J$ is the solution to the Lyapunov equation $J \la^\top + \la J = \sg \sg^\top $
with $\sg^\top$ denotes the transpose of a matrix. In \eqref{S-xy}, we adopt the Einstein summation convention for repeated indices, and this convention will be used consistently throughout the paper.

Since the process $\wdt y_t^\e$ evolves on a fast time scale as $\e\to 0$, it is natural to expect that $\wdt x^\e$ \textit{converges weakly} to an averaged process $\bar x$, which satisfies the following  effective SDE:
\beq{barx}
d\bar x_t = \big[(\lbar{\la^{-1}b})(\bar x_t)+ \bar S(\bar x_t)\big]dt + (\lbar{\la^{-1}\sg})(\bar x_t)dW'_t, \quad \bar x_0 =x_0 \in \rr^{d_1},
\eeq
for some Brownian motion $W'$, where above averaged coefficients are defined as 
\beqa{bar-b-sg}
(\lbar{\la^{-1}b})(x) \ad : = \int_{\rr^{d_2}} (\la^{-1}b)(x,y) \mu^x(dy) \\
(\lbar{\la^{-1}\sg})(x) \ad :=\sqrt{\int_{\rr^{d_2}}[(\la^{-1}\sg)(\la^{-1}\sg)^\top] (x,y) \mu^x(dy)},
\eeqa
with $(\la^{-1}\ell)(x,y):=\la^{-1}(x,y)\ell(x,y)$ for $\ell=b,\sg$, and
\beq{bar-S}
\bar{S}_i(x):=\int_{\rr^{d_2}} \frac{\partial}{\partial x_\ell} \Big[\la^{-1}_{ij}(x,y)\Big] J_{j\ell}(x,y) \mu^x(dy).
\eeq
Here $\mu^x(dy)$ denotes the unique invariant measure of $y_t^x$ with frozen slow variable $x$: 
\beq{yt-x}
dy_t^x = f(x, y_t^x)dt + g(x, y_t^x) dB_t, \quad y_0^x = y_0 \in \rr^{d_2}.
\eeq

Our main contribution is the establishment of a simultaneous Smoluchowski--Kramers approximation and Khasminskii averaging principle, yielding a rigorous derivation of the effective small-mass limit for the multiscale system \eqref{fast-slow} under the critical scaling $m=\e^2$.

In this paper, we assume that $\la^{-1}\sg$ is independent of the fast variable $y$, a condition satisfied in many physical models, including the example in \secref{sec:BM-shb}. Under this assumption, the averaged diffusion coefficient $\lbar{\la^{-1}\sg}$ in \eqref{barx} reduces to $\la^{-1}\sg$ (i.e., no averaging with respect to $y$ is required), and weak convergence strengthens to convergence in probability. More precisely, we prove that the process $x_t^{m}$ converges in probability, in $C([0,T];\mathbb{R}^{d_1})$, to $\bar x$, the solution of the averaged equation \eqref{barx} with $\lbar{\la^{-1}\sg}$ replaced by $\la^{-1}\sg$, as $\e \to 0$; see \thmref{thm:SK-avg}. Notably, even under this structural simplification, the system \eqref{fast-slow} remains fully coupled. A key ingredient in our analysis is a novel representation formula for $x_t^m$, derived via integration-by-parts arguments combined with solutions to Poisson equations associated with the fast dynamics. We further exploit the Kurtz-Protter framework \cite{KP91} to establish the small-mass limit, while the Poisson equation approach is used to capture the influence of fast environmental fluctuations. For the general weak convergence to \eqref{barx} when the effective diffusion coefficient $\lambda^{-1}\sigma$ depends on the fast variable, we refer the reader to \cite{QY26}, where the corresponding martingale problem is analyzed in detail.

To the best of our knowledge, this work provides the first rigorous analysis of the small-mass limit in nonequilibrium random environments for fully coupled second-order fast-slow systems. It opens the door to the study of small-mass limits for a broader class of stochastic dynamics in random environments, including interacting particle systems with mean-field interactions and stochastic partial differential equations.

The remainder of the paper is organized as follows. \secref{sec:method} outlines the proof strategy, highlights the main technical challenges, and reviews related literature. \secref{sec:pre} introduces the notation, assumptions, preliminaries, and main results. \secref{sec:SK} establishes the Smoluchowski–Kramers approximation in nonequilibrium random environments. \secref{sec:exm-num} presents a motivating example from physical applications, along with detailed explanations and numerical illustrations. \secref{sec:rem} concludes with remarks and directions for future research. Technical proofs are deferred to \appref{app:lem-proof}.

\section{Methodology and Literature Review}\label{sec:method}
\subsection{Methodology of the Proof} This subsection presents methodologies for  proving the Smoluchowski-Kramers approximation in nonequilibrium random environments with an emphasis on main technical challenges.

Define $v_t^m =\dot{x}_t^m$ as the velocity. The slow-fast system \eqref{fast-slow} can be written as 
\beq{xvy}\left\{\barray
d x_t^m \ad = v_t^m dt  \\
d v_t^m \ad = \bigg\{ \frac{
b(x_t^m, y_t^\e)}{m} -\frac{\la(x_t^m, y_t^\e)}{m} v_t^m \bigg\} dt + \frac{\sg(x_t^m, y_t^\e)}{m} dW_t\\
dy_t^\e \ad = \frac{1}{\e} f(x_t^\e, y_t^\e)dt + \frac{1}{\sqe} g(x_t^\e, y_t^\e) d B_t.
\earray\right.\eeq 

To rigorously analyze the limit of the system \eqref{fast-slow} as $\e \to 0$, we adopt the framework of \cite{HMVW15}, relying on an integration-by-parts representation of $x_t^m$ in the form 
\bea\ad 
x_t^m = x_0 + \wdh \CU_t^m +\int_0^t F(x_s^m) d\CH_s^m,
\eea
where $\wdh \CU_t^m \to 0$ and $\CH_s^m \to \CH_s$ as $m \to 0$. In our case, the coupling with the fast process yields that both $\wdh \CU_t^m$ and $F(x_s^m)$ depend on $y^\e$, yielding the abstract  representation
\beq{xtm-FH}
x_t^m =x_0 + \wdh \CU_t^m(x_t^m, y_t^\e) + \int_0^t F(x_s^m, y_s^\e)d \CH_s^m.
\eeq 

We employ the stochastic integral convergence framework developed by Kurtz and Protter \cite{KP91} to study the convergence of \eqref{xtm-FH}. The fast-oscillatory nature of the integrand necessitates a modification of this approach. Drawing on homogenization results for stochastic integrals \cite{Lej02} and limit theorems for first-order multiscale SDEs \cite{PV01,PV03,PV05,RX21,RX21a}, we therefore introduce the following averaged integrand
\bea\ad 
\bar{F}(x)= \int_{\rr^{d_2}} F(x,y)\mu^x(dy), \forall\,  x\in \rr^{d_1}, y\in \rr^{d_2},
\eea
where $\mu^x$ is the invariant measure of process $y^x$ defined in \eqref{yt-x} and rewrite \eqref{xtm-FH} as 
\beq{xtm-barF}\barray
x_t^m \ad = x_0 + \underbrace{\wdh{\CU}_t^m(x_t^m, y_t^\e) + \int_0^t F(x_s^m, y_s^\e) -\bar F(x_s^m) d \CH_s^m}_{\text{new } \wdh \CU_t^m(x_t^m, y_t^\e)\ =:\  \CU_t^m(x_t^m, y_t^\e)} + \int_0^t \bar{F}(x_s^m) d\CH_s^m \\
\ad = x_0 + \CU_t^m(x_t^m, y_t^\e) + \int_0^t \bar F(x_s^m) d \CH_s^m.
\earray\eeq

For $\CH_s^m=s$, the first integral on the right-hand side of the first line of \eqref{xtm-barF} gives the deterministic integral 
\beq{Fs}
\int_0^t F(x_s^m, y_s^\e) -\bar F(x_s^m)ds.
\eeq
To handle \eqref{Fs}, we apply the Poisson equation method, rewriting the integral \eqref{Fs} in terms of the solution to an associated Poisson equation; see \secref{sec:Poi}.

One of the main analytical difficulties is that this rewriting generates a term that prevents \eqref{Fs} from vanishing in $L^2(\Omega; C([0,T]; \rr^{d_1}))$ as $\e \to 0$. More precisely, it introduces a contribution involving the time derivative of the slow variable $x$, with coefficient $\e=\sqrt{m}$, namely $\sqrt{m} v_t^m$, whose second moment is only bounded (see \lemref{lem:mv}) and thus does not yield sufficient decay. To overcome this issue, we work at the differential level and rewrite the deterministic integral in terms of $dx_t^m$, using the identity $v_t^m dt = dx_t^m$. By collecting all terms involving $dx_t^m$ from the right-hand side of \eqref{xtm-barF} and moving them to the left-hand side, we factor out the associated matrix coefficient of $dx_t^m$. This leads to a new representation of the solution $x_t^m$ obtained by inverting this matrix coefficient; see \eqref{xt-j}. To analyze the resulting expression, we employ a Neumann series expansion for the inverse matrix. This step is essential to show that the term $\CU_t^m(x_t^m, y_t^\e)$ in \eqref{xtm-barF} converges to zero in $L^2(\Omega; C([0,T]; \rr^{d_1}))$.

In light of the above observation and difficulty, the convergence of the stochastic integral
\beq{FW}
\int_0^t F(x_s^m, y_s^\e) - \bar F(x_s^m)\, dW_s, 
\eeq
with $\CH_s^m = W_s$, requires careful treatment. Under our structural assumption that $\lambda^{-1}\sigma$ is independent of the fast variable $y$, the stochastic integral \eqref{FW} does not involve any fast-scale fluctuations in the integrand, which substantially simplifies the analysis. In contrast, when the diffusion coefficient depends on the fast process, the averaging literature typically yields only weak convergence. In our setting, since $\lambda^{-1}\sigma$ acts as the effective diffusion coefficient and is independent of $y$, we are able to strengthen the result to convergence in probability.

Another major challenge is to establish moment bounds for $x^m$ in $C([0,T]; \rr^{d_1})$ when $\CX = \rr^{d_1}$, which is also crucial for proving tightness. To address this issue, existing works typically rely on a mild representation of $x^m$ obtained via the variation-of-constants formula. Specifically, for any $0 \leq s \leq t \leq T$ and $m, \e > 0$, we define
\beq{def-Am}
A^m(t,s) := \frac{1}{m} \int_s^t \la(x_s^m, y_s^\e)ds, \quad A^m(t):= A^m(t,0).
\eeq
From variants of constant formula to \eqref{xvy}, we have
\beq{vm} 
v_t^m = v_0 e^{-A^m(t)} +\frac{1}{m} \int_0^t e^{-A^m(t,s)} b(x_s^m, y_s^\e) ds + \frac{1}{m} e^{-A^m(t)} \int_0^t e^{A^m(s)}\sg(x_s^m, y_s^\e)dW_s.
\eeq
Taking integration from $0$ to $t$ for \eqref{vm}, we can obtain the mild representation form of $x_t^m$:
\beq{xm-mild}\barray
x_t^m \ad = x_0 + v_0 \int_0^t e^{-A^m(s)}ds + \frac{1}{m} \int_0^t \int_0^s e^{-A^m(s,r)} b(x_r^m, y_r^\e)drds  \\
\aad\quad + \frac{1}{m} \int_0^t \int_0^s e^{-A^m(s,r)} \sg(x_r^m,y_r^\e)dW_r ds.
\earray\eeq
In \eqref{vm}, the essential difficulty lies in following stochastic integral term:
\beq{eA-intW}
e^{-A^m(t)} \int_0^t e^{A^m(s)} \sg(x_s^m, y_s^\e)dW_s.
\eeq

It is important to emphasize that we \textit{cannot directly} move the factor $e^{-A^m(t)}$ inside the stochastic integral in \eqref{eA-intW} and interpret the resulting expression as an It\^{o} integral. The reason is that $e^{-A^m(t)}$ is \textit{not adapted}, which creates a fundamental obstacle to using the mild formulation for establishing moment bounds and tightness of $x_t^m$. At the same time, the presence of $e^{-A^m(t)}$ is crucial for compensating the large factor $e^{A^m(s)}$ in the integrand of \eqref{eA-intW}, which arises from the scaling $1/m = 1/\e^2 \to \infty$ as $\e \to 0$. Resolving this tension constitutes a central technical challenge.

In the special case $\sg(x,y)=\sg(x)$, Sandra Cerrai and Mark Freidlin \cite{CF15} interpret the stochastic integral \eqref{eA-intW} as a \textit{pathwise integral} for each $\omega \in \Omega$, and then apply an integration-by-parts argument to derive the required estimates. However, this approach breaks down in the fully coupled setting where $\sg(x,y)$ depends on the fast variable. In this case, the presence of the fast process necessitates the use of It\^{o}'s formula in the integration-by-parts procedure, which in turn produces a stochastic integral of the same type as \eqref{eA-intW}. This leads to a circular structure that prevents closing the estimates; see \cite{Qia26a}.

Finally, we point out that the arguments in \cite[p.~8]{WSW24} and \cite{SWLW24} for establishing moment boundedness and tightness of the mild solution has a gap, as they rely on the use of It\^{o} calculus in handling \eqref{eA-intW}; see also the recent discussion in \cite{Bla26}.

The other natural approach to address the \textit{non-adaptedness} of the integrand in \eqref{eA-intW} is to employ Malliavin calculus. Using Malliavin calculus together with integration-by-parts formula in \cite[Proposition 1.3.3]{Nua06}, one can rewrite \eqref{eA-intW} as 
\beq{eA-intW1}\barray\ad
e^{-A^m(t)} \int_0^t e^{A^m(s)} \sg(x_s^m, y_s^\e)dW_s \\
\aad = \int_0^t e^{-A^m(t,s)} \sg(x_s^m, y_s^\e)dW_s + \int_0^t  e^{A^m(s)} \sg(x_s^m, y_s^\e)D_s e^{-A^m(t)}ds =: V_1^m(t)+ V_2^m(t),
\earray\eeq
where $D_{\cdot}$ denotes the Malliavin derivative. This interpretation first appeared in \cite{Ngu22}, where the author studied the exponential tightness of \eqref{eA-intW} with $t=T$, $m=
\e^2$, and with a small coefficient $\sqe$ multiplying $\sg$. In this setting, the term  $V_1^m$ can be interpreted as a Skorokhod integral with anticipating integrand. Although we have newly observed that the $e^{-A^m(t,s)}$ in $V_1^m$ can be viewed as a \textit{random evolution system} in the sense of \cite[Definition 3.1, p. 158]{LN98} and wish to apply maximal inequality for Skorokhod integral, this approach does not appear to be sufficient to establish tightness of $x^m$. The main obstacle arises from the presence of the singular coefficients $1/m$ in the computation of Malliavin derivative $D_s e^{-A^m(t)}$  of $V_2^m$, which prevents us obtaining uniform moment bounds with repsect to $m$ or $\e$.

For these reasons, we first establish a general Smoluchowski--Kramers approximation for the fully coupled slow--fast stochastic system \eqref{fast-slow} under \assmref{ass:mom-bdd}, which guarantees moment boundedness of $x^m$. The condition \assmref{ass:mom-bdd} can be verified when the diffusion coefficient of the slow component is independent of the fast variable, i.e., $\sg(x,y)=\sg(x)$. In this setting, the stochastic integral in \eqref{eA-intW} admits a pathwise interpretation, which allows the factor $e^{-A^m(t)}$ to be moved inside the integrand. For the general case of $y$-dependent coefficients $\sg(x,y)$, verifying \assmref{ass:mom-bdd} or establishing tightness of $x^m$ is deferred to our subsequent work. In that setting, where $\la^{-1}\sg$ may depend on both $x$ and $y$, one expects to obtain weak convergence of $x_t^m$ to $\bar x$, the solution of \eqref{barx}.

\subsection{Literature Review}

The Smoluchowski-Kramers (SK) approximation provides a rigorous framework for approximating second-order stochastic differential equations with small mass by first-order SDEs. This reduction significantly simplifies the analysis and numerical simulation of complex systems. The fundamental idea dates back to the seminal works of Smoluchowski \cite{Smo16} and  Kramers \cite{Kra40}. 

Substantial progress has been made in establishing rigorous small-mass limit under various assumptions. In particular, Freidlin \cite{Fre04} proved convergence in probability of the small-mass limit for systems with constant friction $\la$ and position-dependent noise $\sg(x)$. This result was extended in \cite{CF05,FH11}, and further generalized to include state-dependent friction and noise in the work of Hottovy et al. \cite{HMVW15}, where a noise-induced drift appears in the limiting dynamics. More recent developments include results for unbounded coefficients \cite{HHV16}, system driven by fractional Brownian motion \cite{Son20}, and McKean-Vlasov (mean-field) equation with state-dependent friction \cite{SWLW24,SLD24}, and among others. 

Parallel research has also focused heavily on the Smoluchowski-Kramers approximation for stochastic partial differential equations, particularly stochastic damped wave equations. We refer to works  \cite{CF06,CF06a,Han24} for constant damping coefficients, and \cite{CFS17,CX22,CX23,CX24,CX25,CD25} for state-dependent friction coefficients.

To the best of our knowledge, no rigorous results exist for the Smoluchowski-Kramers approximation in nonequilibrium random environments concerning fully coupled slow-fast second-order SDEs. Although works of \cite{NY20,NY21,QY22,NY24} address Langevin dynamics in random environment, their analyses focus primarily on large and moderate deviation principles under at least one of the following assumptions: (i) the diffusion coefficient $\sg$ is absent in equation \eqref{fast-slow}; see \cite{NY24}; (ii) $\sg$ has small intensity of order $\sqe$; (iii) the fast process $y^\e$ and the state variable $x_t^m$ are not fully coupled. In the first part of our work, we address the small-mass limit in random environment under a general setting, without imposing assumptions (i)-(iii).

\section{Preliminaries, Assumptions, and Main Results}\label{sec:pre}

\subsection{Notation}
Let $T>0$ be a fixed terminal time. For a vector $a\in \rr^{d_1}$, $|a|$ denotes the Euclidean norm, and for a matrix $A\in \rr^{d_1\times d_1}$, $|A|$ denotes the induced operator norm. Throughout the paper, $C$ (with and without subscripts) denotes a generic positive constant independent of $m$ and $\e$, whose value may change from line to line.

For $\ell=b,\sg$, we use the notations $[\la^{-1}\ell](x,y)$ and $\ell_\la(x,y)$ interchangeably to denote the product of $\la^{-1}(x,y) \ell(x,y)$. Similarly, we denote the partial derivative of $\ell$ with respect to $x$ (or $y$) as $\nabla_x \ell$ ($\nabla_y \ell$) or $\ell_{x}$ ($\ell_y$), with the choice between them depending on convenience.

For a function $u\in C(\rr^{d_1})$, we define $|\cdot|_\infty := \sup_{x\in \rr^{d_1}} |u(x)|$. Let $\CS$ be a Polish space and denote by $\CP(\CS)$ the space of probability measures on $\CS$, equipped with the topology of weak convergence. The space of Borel measurable maps from $[0,T]$ into $\CS$ is denoted by $\MM([0,T]:\CS)$.

For function spaces, the subscript $b$ indicates boundedness in both variables $(x,y)$, while the subscript $p$ indicates polynomial growth in $y$-variable. In particular, for $Z\in L_p^\infty(\rr^{d_1}\times \rr^{d_2})$, there exists a constant $C, \wdh n>0$ such that
\bea
|Z(x,y)| \leq C(1+|y|^{\wdh n}),\quad \forall x\in \rr^{d_1}, y\in \rr^{n_2}.
\eea
For $0<\dl\leq 1$, we define $C_{p}^{0,\dl}:=C_p^{0,\dl}(\rr^{d_1}\times \rr^{d_2})$ as the space of functions that are locally H\"{o}lder continuous in $y$ and have at most polynomial growth in $y$. That is, there exists a constant $C, \wdh n>0$ such that for all $y_1, y_2\in \rr^{d_2}$, 
\bea
|Z(x,y_1)-Z(x,y_2)| \leq C(|y_1-y_2|^\dl \wedge 1) (1+|y_1|^{\wdh n}+|y_2|^{\wdh n}).
\eea
The associated semi-norm is given by 
\bea\ad
[Z]_{C_p^{0,\dl}}:=\sup_{x\in \rr^{d_1}}\sup_{|y_1|\leq 1,|y_2|\leq 1, |y_1-y_2|\leq 1} \frac{|Z(x,y_1)-Z(x,y_2)|}{|y_1-y_2|^\dl}.
\eea
For $0<\vartheta<1$, we define $C_p^{\vartheta,\dl}:= C_p^{\vartheta,\dl}(\rr^{d_1}\times \rr^{d_2})$ as  the space of all functions that are local $\vartheta$-local H\"{o}lder continuous in $x$, $\dl$-local H\"{o}lder continuous with polynomial growth in $y$. That is, for any $x_1,x_2 \in \rr^{d_1}, y_1, y_2 \in \rr^{d_2}$,
\bea
|Z(x_1, y_1)-Z(x_2, y_2)| \leq C \big[(|x_1-x_2|^\vartheta\wedge 1 )+(|y_1-y_2|^\dl \wedge 1)\big](1+|y_1|^{\wdh n}+|y_2|^{\wdh n}).
\eea
The corresponding quasi-norm for $C_p^{\vartheta,\dl}$ is defined as 
\bea\ad 
[Z]_{C_p^{\vartheta,\sg}}:=\sup_{|x_1-x_2|\leq 1} \sup_{|y_1|,|y_2|\leq 1, |y_1-y_2|\leq 1} \frac{|Z(x_1, y_1)- Z(x_2, y_2)|}{|x_1-x_2|^\vartheta+|y_1-y_2|^\dl}.
\eea
When $\vartheta,\dl\geq 1$, we use $C_p^{\vartheta,\dl}:=C_p^{\vartheta,\dl}(\rr^{d_1}\times \rr^{d_2})$ to denote the space of all functions $Z$ satisfying $\partial_x^{[\vartheta]} \partial_y^{[\dl]}Z \in C_p^{\vartheta-[\vartheta], \dl-[\dl]}$.

Throughout the paper, we use the notation $x_t^m$ and $v_t^m$ in place of $x_t^\e$ and $v_t^\e$ to emphasize the scaling $m=\e^2$, whose rationale is discussed in \remref{rem:me2}.

We assume the following moment boundedess of $x^m$.
\begin{ass}\label{ass:mom-bdd}\rm{ We assume there exists a constant $m_0>0$ (i.e. $\e_0>0$) such that for all $m\in (0,m_0)$ (i.e. $\e\in (0,\e_0)$), there exists a constant $C$ independent of $m$ such that 
\beq{mom-bdd}\sup_{t\in [0,T]}\EE |x_t^m|^k \leq C  \text{ and }
\EE\bigg[\sup_{t\in [0,T]}|x_t^m|^k \bigg] \leq C, \quad \forall\; k\in \NN.
\eeq
}
\end{ass}
In addition to \assmref{ass:mom-bdd}, we impose the following explicit conditions on coefficients of the slow-fast stochastic system \eqref{fast-slow}.

\begin{ass}\label{ass:gd-lip}\rm{
We assume
\begin{itemize}
\item[(A1)] The functions $b:\rr^{d_1} \times \rr^{d_2} \to \rr^{d_1}$, $\la: \rr^{d_1} \times \rr^{d_2} \to \rr^{d_1 \times d_1}$, and $\sg: \rr^{d_1} \times \rr^{d_2} \to \rr^{d_1 \times n_1}$ are continuously differentiable and globally Lipschitz continuous. More precisely, there exists a constant $C>0$ such that for all $x_1,x_2\in \rr^{d_1}$, $y_1, y_2\in \rr^{d_2}$ and for $\ell=b,\la$, and $\sg$,
\bea\ad 
|\ell(x_1,y_1)-\ell(x_2,y_2)| \leq C (|x_1-x_2|+ |y_1 -y_2|),
\eea
Moreover, $b$ satisfies a linear growth condition $
|b(x, y)| \leq C(1+|x|)$, uniformly in $y$.

\item[(A2)] The mapping $\la: \rr^{d_1} \times \rr^{d_2} \to \rr^{d_1}\times \rr^{d_1}$ belongs to $C_b^{1,2}(\rr^{d_1}\times \rr^{d_2})$ and has bounded partial derivatives. Moreover, there exists a constant $\la_0>0$ such that for all $\xi\in\rr^{d_1}$,
\bea\ad
\inf_{(x,y)\in \rr^{d_1}\times \rr^{d_2}}\qv{\la(x,y)\xi, \xi}_{\rr^{d_1}} \geq \la_0 |\xi|^2.
\eea

\item[(A3)] The matrix $\sg:\rr^{d_1} \times \rr^{d_2} \to \CL(\rr^{n_1}, \rr^{d_1})$ is invertible, bounded, and continuously differentiable with bounded partial derivatives.

\item[(B1)] The drift coefficient $f: \rr^{d_1} \times \rr^{d_2} \to \rr^{d_2}$ is bounded and continuous. Moreover,
\beq{prod-yf}
\lim_{|y|\to \infty} \sup_{x\in \rr^{d_1}}\qv{y,f(x,y)}=-\infty.
\eeq
\item[(B2)] The diffusion matrix $G:= gg^\top$ is non-degenerate in $y$ (uniformly with respect to $x$), bounded, and uniformly continuous. Besides, there exist constants $\beta_1, \beta_2>0$ such that
\bea\ad 
0 < \beta_1 \leq \frac{\qv{(gg^\top)(x,y)y, y}_{\rr^{d_2}}}{|y|^2} \leq \beta_2.
\eea
\end{itemize}
}
\end{ass}

\begin{rem}\label{rem:ass-cond}\rm{ We note that
\begin{itemize}
\item[(i)] Under assumptions (A1)--(A3) and (B1)--(B2), the slow-fast stochastic system \eqref{fast-slow} admits a  unique strong solution $(x^m, y^\e)\in C([0,T];\rr^{d_1}) \times C([0,T];\rr^{d_2})$.
\item[(ii)] The  conditions (B1) and (B2) guarantee the existence of a unique invariant measure $\mu^x(dy)$ for the frozen process $y_t^x$ defined in \eqref{yt-x}; see \cite{Ver97,Kha11} for details.
\item[(iii)] The assumption (A1)--(A3) imply that the function $\la^{-1}b$ is locally Lipschitz in $(x,y)$ and has linear growth in $x$, uniformly in $y$. More precisely, there exits a constant $C>0$ such that for any $x,x_1,x_2\in \rr^{d_1}$ and $y,y_1,y_2\in \rr^{d_2}$, we have 
\begin{equation}\label{lip-la-b}\begin{aligned}
|(\la^{-1}b)(x_1,y_1)-(\la^{-1}b)(x_2, y_2)|   \ad 
\leq C(1+|x_1|+|x_2|)\big(|x_1 -x_2|+ |y_1-y_2|\big) \\[-2pt]
|(\la^{-1}b)(x,y)| \ad \leq C(1+|x|)
\end{aligned}
\end{equation}
Similarly, we obtain that ${\la^{-1}\sg}$ is bounded and Lipschitz in $x$ and $y$. 

Moreover, the function $S(x,y)$ defined in \eqref{S-xy} is  uniformly bounded and globally Lipschitz in $(x,y)$. That is, there exists a constant $C>0$ such that 
\begin{equation}\label{lip-S}\begin{aligned}
|S(x_1,y_1)-S(x_2,y_2)| \ad  \leq C(|x_1-x_2|+|y_1-y_2|)\\
|S(x,y)| \ad \leq C.
\end{aligned}
\end{equation}
Consequently, the averaged limiting equation \eqref{barx} has a unique strong solution $\bar x$.

\item[(iv)] Define 
\beq{Z12}\barray
Z_1(x,y) :=(\la^{-1}b)(x,y)-(\lbar{\la^{-1}b})(x,y)\quad 
Z_2(x,y) :=S(x,y)-\lbar{S}(x)
\earray\eeq
where $\lbar{\la^{-1}b}$ and $\bar{S}$ are defined in \eqref{bar-b-sg} and \eqref{bar-S}, respectively. Then the functions $Z_1$ and $Z_2$ satisfy the Lipschitz continuity and growth conditions stated in \eqref{lip-la-b} and \eqref{lip-S}, respectively.
\end{itemize}
}
\end{rem}

The main result of this work is summarized below.

\begin{thm}\label{thm:SK-avg} Suppose that \assmref{ass:mom-bdd} and \assmref{ass:gd-lip} hold. Let $(x^m,y^\e)$ be the solution of slow-fast stochastic system \eqref{fast-slow}. Suppose that $\la^{-1}\sg$ is independent of the fast variable $y$. Under the scaling $m=\e^2$, as $\e\to 0$, the process $x^m$ converges in probability to $\bar x$, where $\bar x$ is the unique strong solution of the following averaged equation:
\bea
d\bar x_t = \big[(\lbar{\la^{-1}b})(\bar x_t)+ \bar S(\bar x_t) \big]dt + (\la^{-1}\sg)(\bar x_t)dW_t, \quad 
\bar x_0 =x_0,
\eea
with effective coefficients defined in \eqref{bar-b-sg} and \eqref{bar-S}.
\end{thm}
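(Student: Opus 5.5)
We follow the strategy outlined in \secref{sec:method}: derive an integration-by-parts representation of $x_t^m$ of the form \eqref{xtm-barF}, show that the remainder $\CU^m$ vanishes, and then pass to the limit in the stochastic integral.

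\textbf{Step 1: representation via integration by parts.} Write the velocity equation in \eqref{xvy} as $\la(x_t^m,y_t^\e)v_t^m\,dt = b\,dt+\sg\,dW_t - m\,dv_t^m$. Multiplying by $\la^{-1}(x_t^m,y_t^\e)$ and integrating gives
\[
x_t^m = x_0+\int_0^t(\la^{-1}b)(x_s^m,y_s^\e)\,ds+\int_0^t(\la^{-1}\sg)(x_s^m)\,dW_s-\int_0^t m\,\la^{-1}(x_s^m,y_s^\e)\,dv_s^m .
\]
In the last term we integrate by parts componentwise, as in \cite{HMVW15}. Since $\la^{-1}$ depends on $y^\e$, Itô's formula produces three kinds of terms:
\begin{itemize}
\item a boundary term $m\la^{-1}v_t^m$;
\item a term $m\,\partial_{x_\ell}\la^{-1}_{ij}\,v^j_s v^\ell_s\,ds$;
\item terms $\frac{m}{\e}(\mathcal L_y\la^{-1})v\,ds$ and $\frac{m}{\sqe}(\nabla_y\la^{-1}g)v\,dB_s$ coming from the fast generator $\mathcal L_y$.
\end{itemize}
The quadratic term $m\,v\otimes v$ is handled by applying Itô's formula to $m^2 v\otimes v$. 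This yields the Lyapunov equation $\la(m\,v\otimes v)+(m\,v\otimes v)\la^\top=\sg\sg^\top+\text{(small)}$, so that $m\,v\otimes v\,ds\approx J\,ds$, which produces the noise-induced drift $S$ of \eqref{S-xy}. Under $m=\e^2$, the fast-generator terms carry factors $\e\,v=\sqrt m\,v/1$ and $\e^{3/2}v$. By \lemref{lem:mv}, $\EE|\sqrt m v|^2$ is only bounded, so these terms are kept in differential form.

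\textbf{Step 2: averaging via Poisson equations.} Next we replace $\la^{-1}b$ and $S$ by their averages. We write $\int_0^t Z_i(x_s^m,y_s^\e)\,ds$ with $Z_1,Z_2$ from \eqref{Z12}. Let $\Phi_i$ solve the Poisson equation $\mathcal L_y^x\Phi_i=-Z_i$ (\secref{sec:Poi}), with the regularity and polynomial-growth bounds given there. Itô's formula applied to $\e\,\Phi_i(x_s^m,y_s^\e)$ turns this integral into the following pieces:
\begin{itemize}
\item an $O(\e)$ boundary term;
\item an $O(\sqe)$ martingale in $dB$;
\item the term $\e\,\nabla_x\Phi_i\,v_s^m\,ds=\e\,\nabla_x\Phi_i\,dx_s^m$.
\end{itemize}
All $dx^m$-contributions from Steps 1 and 2 are collected on the left-hand side, giving
\[
\big(I+\e M(x_s^m,y_s^\e)\big)\,dx_s^m=\dots,
\]
where $M$ collects $\nabla_x\Phi_i$ and the $\mathcal L_y\la^{-1}$ terms, which are bounded by polynomial growth in $y$. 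We invert $I+\e M$ by a Neumann series $\sum_k(-\e M)^k$. This is valid on the event where $\e|M|<1/2$, whose complement has probability tending to zero by moment bounds on $\sup_t|y_t^\e|$ (in the form $\e^{\delta}$-growth, which holds under (B1)--(B2)). Expanding, every correction carries a positive power of $\e$ multiplied by bounded integrands. The result is the representation \eqref{xtm-barF} with
\[
\CU^m\to0 \quad\text{in probability in } C([0,T];\rr^{d_1}),
\]
using \assmref{ass:mom-bdd}, Burkholder--Davis--Gundy, and Lipschitz estimates.

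\textbf{Step 3: identification of the limit.} We now have
\[
x_t^m=x_0+\CU_t^m+\int_0^t(\lbar{\la^{-1}b}+\bar S)(x_s^m)\,ds+\int_0^t(\la^{-1}\sg)(x_s^m)\,dW_s .
\]
The integrands do not depend on $y$ and are locally Lipschitz; the drift is locally Lipschitz by \eqref{lip-la-b} and \eqref{lip-S}, after averaging against $\mu^x$, whose Lipschitz dependence on $x$ follows from the Poisson-equation estimates. We subtract the equation for $\bar x$, localize with the stopping time $\tau_R=\inf\{t:|x_t^m|\vee|\bar x_t|>R\}$, and apply Gronwall's inequality. This gives $\EE\sup_{t\le T\wedge\tau_R}|x_t^m-\bar x_t|^2\wedge1\to0$. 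Finally, \assmref{ass:mom-bdd} makes $\PP(\tau_R<T)$ uniformly small, which yields convergence in probability. Alternatively, one may invoke Kurtz--Protter \cite{KP91} together with pathwise uniqueness.

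\textbf{Main obstacle.} The hardest part is Step 2: controlling the terms of order $\e v^m=\sqrt m\,v^m$ that arise from the $y$-dependence, namely in $\la^{-1}$ and in the Poisson correctors. These terms do not vanish in $L^2$ directly, and the Neumann-series inversion must be justified uniformly despite the unbounded $y^\e$. If the truncation argument proves too delicate, I would first try to assume that the correctors and $\nabla_y\la^{-1}$ are bounded, and then remove that assumption by localization.
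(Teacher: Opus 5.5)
Your proposal is correct in outline and follows essentially the same route as the paper. Both arguments integrate $\int_0^t m\la^{-1}\,dv_s^m$ by parts and use the Lyapunov identity to produce $J$ and $S$, introduce Poisson correctors for $\la^{-1}b-\lbar{\la^{-1}b}$ and $S-\bar S$, collect every $dx^m$-term into $I-\sqrt m\,\Dl_{R,n}$, invert it by a Neumann series, and then pass to the limit. The paper does the last step with Kurtz--Protter, keeping $\int_0^t \bar D(x_s^m)\,d[(mv_s^m)(mv_s^m)^\top]$ as an integral against $\CH^m$.

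The differences are technical. The paper truncates $Z_1$ by $\indi_{\{|x|\le R\}}$, mollifies in $x$ (limits $\e\to0$, then $n\to\infty$, then $R\to\infty$), and takes $\Dl_{R,n}$ uniformly bounded; your high-probability localization is, if anything, more careful about the $y$-growth of the correctors. If you use Gronwall rather than Kurtz--Protter, note that the $d[(mv^m)(mv^m)^\top]$ term you absorb into $\CU^m$ vanishes only by integration by parts (using $\sup_t|mv_t^m|\to0$), not by BDG and Lipschitz bounds alone.
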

\begin{rem}\rm{
   We emphasize that the above result yields convergence in probability, due to the independence of $\lambda^{-1}\sigma$ from the fast variable $y$. When $\lambda^{-1}\sigma$ depends on $y$, one must instead analyze the associated martingale problem to establish weak convergence. This, together with the verification of \assmref{ass:mom-bdd}, is deferred to our subsequent  work.}
\end{rem}

\subsection{Preliminaries} In what follows, 
we recall essential tools that will be used in \secref{sec:SK} for  \thmref{thm:SK-avg} and present some preliminary results.
\subsubsection{Convergence of stochastic integrals}\label{sec:con-sint}
We recall the convergence of stochastic integrals below as our main idea for the study of Smoluchowski-Kramers approximation in nonequilibrium random environment in \secref{sec:SK}; see Kurtz and Protter \cite{KP91} and \cite{HMVW15} for details.

For $n\in \NN$, consider $(\CU^m, \CH^m)\in C([0,T]; \rr^{d_1} \times \rr^n)$ adapted to the filtration $\CF_t$, where $\CH_t^m$ is martingale with respect to $\CF_t$. Let $\CH_t^m = \CN_t^m + \CA_t^m$ be the Doob-Meyer decomposition. Let $F: \rr^{d_1} \to \rr^{d_1 \times n}$ be a continuous matrix-valued function, and suppose that $x^m \in C([0,T]; \rr^{d_1})$ satisfies the following integral equation:
\beq{xtm-fH}
x_t^m = x_0 + \CU_t^m + \int_0^t F(x_s^m) d\CH_s^m.
\eeq
Let $x\in C([0,T];\rr^{d_1})$ be the solution of the following equation:
\beq{xt-fH}
x_t =x_0 + \int_0^t F(x_s) d\CH_s.
\eeq

\begin{thm}[\rm{\cite[Theorem 5.10]{KP91}}]\label{thm:KP}
Assume that the following conditions hold:
\begin{itemize}
\item[(KP1)] (Convergence condition): $(\CU_t^m, \CH_t^m) \to (0,\CH)$ in probability with respect to the sup-norm in $C([0,T]; \rr^{d_1} \times \rr^n)$, i.e., for all $\dl>0$,
\beq{cond-1}
\PP\bigg[\sup_{s\in [0,T]}\Big(|\CU_s^m-0|+ |\CH_s^m - \CH_s|\Big) > \dl \bigg] \to 0, \quad \text{as } m \to 0.
\eeq
\item[(KP2)] (Tightness condition): the total variations $\{\CV_t(\CA^m)\}$ are stochastically bounded for each $t>0$, i.e., $\PP[\CV_t(\CA^m) > K] \to 0$ as $K \to \infty$, uniformly in $m$.
\end{itemize}
Suppose that there exists a unique global solution $x$ to \eqref{xt-fH}. Then as $m\to 0$, $x^m$ converges to $x$ in probability with respect to $C([0,T];\rr^{d_1})$.
\end{thm}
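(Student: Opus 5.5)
The plan is to go through weak convergence and then upgrade to convergence in probability, using the fact that the limit is a strong solution driven by $\CH$. \textbf{First}, I would verify that $\{\CH^m\}$ is a \emph{good} sequence of semimartingales, i.e.\ that it satisfies the uniform tightness (UT) condition of Kurtz and Protter. Write $\CH^m=\CN^m+\CA^m$. (KP2) gives stochastic boundedness of the total variation of $\CA^m$, which handles the finite-variation part. For the local martingale part $\CN^m$, I would argue as follows. Since $\CH^m\to\CH$ uniformly in probability by (KP1) and $\CH$ is continuous, the jumps of $\CN^m$ are controlled; stopping at $\inf\{t:|\CN^m_t|\ge K\}$ then bounds the quadratic variation in $L^1$ via the Burkholder--Davis--Gundy inequality. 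Together these yield that, for every predictable simple integrand $Z$ with $|Z|\le1$, the integrals $\int_0^t Z\,d\CH^m$ are stochastically bounded uniformly in $m$.

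\textbf{Second}, I would establish tightness of $\{x^m\}$ in $C([0,T];\rr^{d_1})$ by localization. Fix $R>0$ and a bounded continuous $F_R$ agreeing with $F$ on $\{|x|\le R\}$, and let $\tau^m_R$ be the exit time of $x^m$ from that ball. On $[0,\tau^m_R]$, the UT property together with the boundedness of $F_R$ gives stochastic boundedness of $\int_0^{\cdot} F_R(x^m)\,d\CH^m$ and control of its modulus of continuity. For the modulus I would use the convergence $\CH^m\to\CH$ in the sup norm and the continuity of $\CH$ to bound oscillations. Since $\CU^m\to0$, the stopped processes $x^m_{\cdot\wedge\tau^m_R}$ are tight.

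\textbf{Third}, I would identify the limit. By Skorokhod representation, take a subsequence with
\[
\big(x^m_{\cdot\wedge\tau_R^m},\CU^m,\CH^m\big)\;\Rightarrow\;(X,0,\CH).
\]
The Kurtz--Protter continuity theorem for good sequences shows that $\int F_R(x^m_{s-})\,d\CH^m_s$ converges jointly to $\int F_R(X_{s-})\,d\CH_s$, so $X$ solves the truncated equation up to exit from the ball. Uniqueness of the global solution $x$ of \eqref{xt-fH}, combined with the standard localization argument that lets $R\to\infty$ using $\sup_{t\le T}|x_t|<\infty$ a.s., gives $\PP[\tau_R^m<T]\to0$ in the limit $R\to\infty$ uniformly in small $m$. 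It also identifies the full limit law as that of $(x,\CH)$, where $x$ is a measurable functional of $\CH$.

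Finally, I would convert this to convergence in probability. The pair $(x^m,\CH^m)$ converges in law to $(\Phi(\CH),\CH)$ for a measurable $\Phi$, and $\CH^m\to\CH$ in probability on the original space. A standard lemma (approximate $\Phi$ by continuous functionals in $L^0$ via Lusin's theorem) then yields $x^m\to\Phi(\CH)=x$ in probability. I expect the main obstacle to be the UT verification for the martingale part $\CN^m$ under only (KP1) and (KP2). I would handle it by a stopping-time truncation that exploits the continuity of the paths in $C([0,T])$, as in Kurtz--Protter's Theorem 5.10 and its use in \cite{HMVW15}.
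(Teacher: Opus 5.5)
The paper gives no proof of this statement. It is quoted from Kurtz and Protter (Theorem 5.10 there), so your outline has to stand on its own. Your Step 1 is fine: (KP1) and (KP2) make $\sup_{t\le T}|\CN^m_t|$ stochastically bounded, and for continuous $\CN^m$, stopping at level $K$ plus BDG gives uniformly controlled variations, hence UT.

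\textbf{The gap is in Step 2.} UT, a bounded integrand and sup-norm convergence of $\CH^m$ to a continuous $\CH$ do \emph{not} control the modulus of continuity of $\int G^m\,d\CH^m$ for bounded predictable $G^m$. Here is a counterexample.
\begin{itemize}
\item Let $\CH^m=\CA^m$ be deterministic: a continuous sawtooth with about $1/(2m)$ teeth of height $m$ packed into $[\tfrac12,\tfrac12+m]$, and $0$ elsewhere.
\item Then $\sup_t|\CA^m_t|\le m$ and $\CV_T(\CA^m)\le 1$, so (KP1) and (KP2) hold with $\CH=0$.
\item Take $G^m=\operatorname{sign}(\dot\CA^m)$. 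The integral $\int_0^t G^m\,d\CA^m=\CV_t(\CA^m)$ rises by about $1$ over an interval of length $m$, so it is not tight in $C([0,T])$.
\end{itemize}
So oscillations of $\CH^m$ alone cannot give the modulus bound, and you must use that the integrand is $F_R(x^m)$.

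The missing idea is a discretization, as in Kurtz and Protter's relative-compactness argument for SDE solutions.
\begin{enumerate}
\item Set $\tau_0=0$ and $\tau_{k+1}=\inf\{t>\tau_k:|x^m_t-x^m_{\tau_k}|\ge\eta\}$, and freeze the integrand at these times.
\item The error $\int (F_R(x^m)-F_R(x^m_{\tau_k}))\,d\CH^m$ is $O_{\PP}(\omega(\eta))$ uniformly in $m$, by UT. Here $\omega$ is the modulus of continuity of $F$ on a ball.
\item On any window of length $\delta$, the frozen sum oscillates by at most $(N^m_\eta+1)\,|F_R|_\infty\,w(\CH^m,\delta)$, where $N^m_\eta$ is the number of $\tau_k\le T$.
\item $N^m_\eta$ is stochastically bounded. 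Expand $\eta^2 N^m_\eta\le\sum_k|x^m_{\tau_{k+1}}-x^m_{\tau_k}|^2$ by summation by parts. Control the $\CH^m$-part by UT (the integrand $x^{m}_{\tau_k}F_R(x^m)$ is bounded after localization), and the $\CU^m$-part by Abel summation together with $\sup|\CU^m|\to 0$.
\end{enumerate}
Only at that point does $w(\CH^m,\delta)\to w(\CH,\delta)$ close the tightness argument.

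\textbf{A lesser point concerns Steps 3 and 4.} Uniqueness of the solution of \eqref{xt-fH} on the original space does not by itself give that a Skorokhod-representation limit $(X,\CH')$ has the law of $(x,\CH)$. For that you need pathwise uniqueness on the limiting space, with respect to the filtration generated by the limit; Kurtz and Protter show $\CH'$ is a semimartingale for that filtration. To obtain your functional $\Phi$, you would also need a Yamada--Watanabe-type argument with its compatibility conditions.

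It is cleaner to carry $x$ along and take limits of $(x^m,x,\CU^m,\CH^m)$. Any limit point $(X^1,X^2,0,\CH')$ has $X^1$ and $X^2$ solving the same equation driven by $\CH'$ on one filtered space. Pathwise uniqueness then gives $X^1=X^2$, so $x^m-x\Rightarrow 0$, which is convergence in probability, and no Lusin approximation is needed. In the paper's application $\CH=(t,W,t,0)$, so the required uniqueness is just pathwise uniqueness for a Brownian SDE.
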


\subsubsection{Poisson equations}\label{sec:Poi}
The Poisson equation will play an important role to deal with the fast-varying random environment $y^\e$.  We refer to the work of \cite{PV01,PV03,PV05,RX21,SX25,Qia25,CDGOS26} and references therein on the development for Poisson equation and its applications in functional central limit theorem, large and/or moderate deviations, etc. Let us define
\beq{CL-x}
\CL_y^x := \sum_{i=1}^{d_2} f_i(x,y) \frac{\partial}{\partial y_i}+ \half \sum_{i,j=1}^{d_2} g_{i}(x,y)g_j(x,y) \frac{\partial^2}{\partial y_i \partial y_j},
\eeq
which is the generator of $y_t^x$ given by \eqref{yt-x}. Consider the following Poisson equation in $\rr^{d_2}$:
\beq{Poi-eq}
\CL_y^x u(x,y) = - Z(x, y)
\eeq
where the slow variable $x$ is regarded as a parameter. To guarantee the well-posedness of \eqref{Poi-eq}, we make the following ``centering condition'' of $Z$:
\beq{cen-cond}
\int_{\rr^{d_2}} Z(x, y)\mu^x(dy) =0,\quad \forall\, x\in \rr^{d_1},
\eeq
where $\mu^x(dy)$ is the invariant measure for $y_t^x$. 

The following result was proved in \cite[Theorem 2.1]{RX21} which will be used frequently. 
\begin{thm}\label{thm:Poi}
Suppose the coefficient $G:=gg^\top/2$ is non-degenerate in $y$ uniformly with respect to $x$ and the drift $f$ satisfies the weak recurrence condition 
\bea\ad
\lim_{|y| \to \infty} \sup_{x\in \rr^{d_1}} \bqv{y,f(x,y)}=-\infty.
\eea 
Assume that $f,g\in C_b^{\vartheta, \dl}$ with $0< \dl \leq 1$ and $\vartheta \geq 0$. Then for every $Z\in C_p^{\vartheta, \dl}$ satisfying \eqref{cen-cond}, there exists a unique solution $u\in C_p^{\vartheta,2+\dl}$ to Poisson equation \eqref{Poi-eq} satisfying \eqref{cen-cond} given by 
\bea\ad 
u(x, y)= \int_0^\infty \EE Z(x, y_t^x)dt.
\eea
Moreover, there exists a constant $\wdh n>0$ such that
\begin{itemize}
\item[(i)] (Case $\vartheta=0$) for any $x\in \rr^{d_1}$ and $y\in \rr^{d_2}$,
\beq{u-der-y}
|u(x, y)|+|\nabla_y u(x, y)|+ |\nabla_y^2 u(x, y)| \leq C_0 (1+|y|^{\wdh n}),
\eeq
where $C_0> 0$ depends only on $d_1, d_2$ and $\|f\|_{C_b^{0,\dl}}$, $\|g\|_{C_b^{0,\dl}}$ and $[Z]_{C_p^{0,\dl}}$;
\item[(ii)] (Case $\vartheta>0$) for any $y \in \rr^{d_2}$,
\bea
\|u(\cdot,y)\|_{C_b^{\vartheta}} \leq C_0 C_\vartheta (1+|y|^{\wdh n}),
\eea
where $C_\vartheta>0$ depends on $d_1, d_2$ and $\|f\|_{C_b^{\vartheta,\dl}}$, $\|g\|_{C_b^{\vartheta,\dl}}$ and $[Z]_{C_p^{\vartheta,\dl}}$.
\end{itemize}
\end{thm}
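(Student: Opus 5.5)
The plan is to build $u$ from the representation $u(x,y)=\int_0^\infty \EE Z(x,y_t^x)\,dt$. Everything then rests on one quantitative input: polynomially weighted exponential ergodicity of the frozen process \eqref{yt-x}, \emph{uniformly} in the parameter $x$.

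\textbf{Step 1: uniform ergodicity.} First I would take $V(y)=(1+|y|^2)^{k/2}$ as a Lyapunov function for $\CL_y^x$ in \eqref{CL-x}. The weak recurrence condition together with boundedness of $g$ should give
\[
\CL_y^x V\le -c_kV+C_k\indi_{\{|y|\le R_k\}},
\]
with constants independent of $x$. This yields $\sup_{t\ge0}\EE|y_t^x|^k\le C_k(1+|y_0|^k)$ and $\int|y|^k\mu^x(dy)\le C_k$. Uniform non-degeneracy and boundedness of $G$, combined with Harnack or Aronson bounds for the transition density, give a Doeblin minorization on the ball $\{|y|\le R\}$ that is uniform in $x$. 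Harris' theorem in the weighted norm then gives
\[
\Big|\EE Z(x,y_t^x)-\int Z(x,\cdot)\,d\mu^x\Big|\le Ce^{-ct}(1+|y|^{\wdh n}).
\]
Under the centering condition \eqref{cen-cond} the mean term vanishes. Hence the integral defining $u$ converges absolutely and $|u(x,y)|\le C(1+|y|^{\wdh n})$.

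\textbf{Step 2: $u$ solves the equation, and uniqueness.} With $P_t^x$ the semigroup, the identity $P_T^xZ-Z=\CL_y^x\int_0^TP_t^xZ\,dt$ holds in the distributional sense in $y$. Letting $T\to\infty$, and using $P_T^xZ\to0$ locally uniformly by Step 1, gives $\CL_y^xu=-Z$ in the distributional sense. Interior Schauder estimates then upgrade this to a classical solution: on $B_1(y)$ one has
\[
\|u\|_{C^{2+\dl}(B_1(y))}\le C\big(\|u\|_{L^\infty(B_2(y))}+\|Z\|_{C^\dl(B_2(y))}\big),
\]
with constant depending only on the $C_b^{0,\dl}$ norms of $f,g$ and the ellipticity. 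Feeding in the polynomial bound on $u$ and on $[Z]_{C^{0,\dl}_p}$ gives \eqref{u-der-y}. Centering of $u$ follows by Fubini and invariance of $\mu^x$. For uniqueness, let $w$ be the difference of two centered solutions of polynomial growth. Then $\CL_y^xw=0$, and It\^o's formula gives $w(y)=\EE w(y_t^x)$. Letting $t\to\infty$ via Step 1 gives $w(y)=\int w\,d\mu^x=0$.

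\textbf{Step 3: regularity in $x$ (case $\vartheta>0$).} For $x_1,x_2$ set $\delta u=u(x_1,\cdot)-u(x_2,\cdot)$. It satisfies
\[
\CL_y^{x_1}\delta u=-\big[Z(x_1,\cdot)-Z(x_2,\cdot)\big]-\big(\CL_y^{x_1}-\CL_y^{x_2}\big)u(x_2,\cdot).
\]
The right-hand side is $O(|x_1-x_2|^\vartheta(1+|y|^{\wdh n}))$ by the $C^{\vartheta,\dl}$ bounds on $f,g,Z$ and Step 2. It is not $\mu^{x_1}$-centered, so I would subtract its $\mu^{x_1}$-mean and represent $\delta u$ as the Step 1 integral plus a constant. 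That constant is fixed by the requirement $\int u(x_2,\cdot)\,d\mu^{x_2}=0$, and is controlled through $|\int\phi\,d\mu^{x_1}-\int\phi\,d\mu^{x_2}|\le C|x_1-x_2|^\vartheta$ for polynomially weighted $\phi$. That last bound comes from the same perturbation identity applied to the Poisson equation for $\phi$. This gives the $C^\vartheta$ bound for $\vartheta<1$. For $\vartheta\ge1$ I would differentiate the equation in $x$ and iterate, recentering at each order.

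\textbf{Main obstacle.} I expect Step 1, specifically the Doeblin constant and Lyapunov constants being uniform in $x$, to be the main obstacle, since every later bound inherits it. I would secure this first via the uniform Aronson-type lower bound for the transition density on a fixed ball. The recentering in Step 3, together with the Hölder dependence of $\mu^x$, is the second delicate point.
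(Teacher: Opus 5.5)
The paper gives no proof of this statement; it quotes it from \cite[Theorem 2.1]{RX21}. Your architecture is the standard one for such results: representation formula, mixing uniform in $x$, interior Schauder estimates in $y$, and a perturbation identity in $x$ with a recentring constant. However, Step 1, which you rightly call load-bearing, is false as stated.

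The hypotheses include $f\in C_b^{\vartheta,\delta}$, so $f$ is bounded and $|\langle y,f(x,y)\rangle|\le\|f\|_\infty|y|$. For $V_k(y)=(1+|y|^2)^{k/2}$ with $k\ge 2$,
\[
\CL_y^xV_k(y)=k(1+|y|^2)^{\frac k2-1}\Big[\langle y,f(x,y)\rangle+\tfrac12\mathrm{Tr}(gg^\top)+\tfrac{k-2}{2}\,\tfrac{\langle gg^\top y,y\rangle}{1+|y|^2}\Big]\ge -C_k|y|^{k-1},\qquad |y|\ge 1.
\]
Hence $\CL_y^xV_k\le -c_kV_k+C_k\indi_{\{|y|\le R_k\}}$ fails for every $k$. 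Weak recurrence only gives the sub-geometric drift $\CL_y^xV_k\le -c_kV_{k-2}+C_k\indi_{\{|y|\le R_k\}}$.

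This is not an artefact of your choice of $V$. Take $d_2=1$, $g\equiv\sqrt2$, $f(y)=-y\log(2+y^2)/(1+y^2)$. All hypotheses hold, and the invariant density is of order $e^{-(\log|y|)^2}$. Muckenhoupt's criterion then shows that this reversible diffusion has no spectral gap, so it does not converge exponentially fast. Consequently neither the geometric Harris theorem nor your bound $|\EE Z(x,y_t^x)-\int Z\,d\mu^x|\le Ce^{-ct}(1+|y|^{\wdh n})$ is available.

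The repair is to use what the hypotheses actually give: polynomial mixing of Veretennikov type (cf.\ \cite{Ver97}, uniform in the parameter as in \cite{PV01}). Equivalently, apply the sub-geometric Harris theorem to the drift condition above together with your uniform minorization on balls. Either way, for every $k$ there are $C_k$ and $m_k$, independent of $x$, such that
\[
\big|\EE Z(x,y_t^x(y))-\mu^x(Z)\big|\le C_k(1+t)^{-k}(1+|y|^{m_k}),
\]
where $m_k$ grows with $k$ and with the growth exponent of $Z$. Every later use of Step 1 in your argument needs only integrability in $t$:
\begin{itemize}
\item convergence of $\int_0^\infty$ and the polynomial bound on $u$;
\item the limit $T\to\infty$ in Step 2;
\item uniqueness;
\item the bound on $\delta u$ in Step 3.
\end{itemize}
So $k=2$ suffices, at the price of a larger $\wdh n$.

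Two smaller points.
\begin{itemize}
\item In Step 2, ``$\CL_y^xu=-Z$ in the distributional sense'' is not meaningful for a non-divergence operator whose second-order coefficient is only H\"older, because its formal adjoint needs two derivatives of $gg^\top$. Instead, identify $u$ on each ball with the classical solution of the Dirichlet problem with boundary data $u$, via Dynkin's formula at the exit time. Only then apply interior Schauder estimates.
\item In Step 3 the right-hand side is in fact automatically $\mu^{x_1}$-centred, since it equals $-\CL_y^{x_1}\delta u$ with $\delta u$ of polynomial growth. The only recentring needed is the constant $\mu^{x_2}(u(x_2,\cdot))-\mu^{x_1}(u(x_2,\cdot))$, which your perturbation identity controls correctly.
\end{itemize}
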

\begin{rem}\label{rem:trun-R}\rm{
A key assumption in \thmref{thm:Poi} is that, for each fixed $y \in \rr^{d_2}$, the function $Z(\cdot, y)$ is uniformly bounded with respect to the slow variable $x$. In \secref{sec:SK}, we apply \thmref{thm:Poi} to the functions $Z_1$ and $Z_2$ defined in \eqref{Z12}. However, $Z_1$ exhibits linear growth in $x$, uniformly in $y$, and is locally Lipschitz continuous in both $x$ and $y$ (see \eqref{lip-la-b}). This corresponds to the parameter regime $\vartheta = 1$, $\dl = 1$, and $\wdh n = 0$ in \thmref{thm:Poi}.

To satisfy the uniform boundedness requirement for $x$ in \thmref{thm:Poi}, we employ a truncation argument. For any fixed $R>0$, we define truncated versions of $b_\la$ and $\bar{b}_\la$ as
\beq{def-bla-R}
b_\la^R(x,y):= b_\la(x,y) \indi_{\{|x|\leq R\}} \text{ and } \bar{b}_\la^R(x)= \int_{\rr^{d_2}} b_\la^R(x,y)\mu^x(dy)
\eeq
Define $Z_1^R(x,y):=b_\la^R(x,y)-\bar{b}_\la^R(x)$.
  We would study the Poisson equation associated with $Z_1^R$. Consequently, the constant $C_0$ and $C_1$ ($C_\vartheta$ with $\vartheta=1$) in \thmref{thm:Poi} would depend on $R$ giving $C_0(R)$ and $C_1(R)$. We refer to the work \cite{CDGOS26}, which extends the analysis to allow for polynomial growth in both $x$ and $y$.}
\end{rem}

We apply It\^{o}'s formula to the solution of the Poisson equation evaluated at $(x_t^m, y_t^\e)$, which requires first-order derivatives of $u$ with respect to $x$ and second-order derivatives with respect to $y$. In view of \thmref{thm:Poi}, the required regularity in the fast variable $y$ is ensured by the uniform ellipticity of the operator $\CL_y^x$ in \eqref{CL-x}.

However, under our assumptions, $Z_1$ and $Z_1^R$ are only Lipschitz continuous in $x$. Consequently, \thmref{thm:Poi} yields at most Lipschitz regularity of $u$ in the $x$-variable, which is insufficient for a direct application of It\^{o}'s formula. To address this issue, we mollify $u$ following a standard procedure in the literature. Let $\varkappa : \rr^{d_1} \to [0,1]$ be a radial mollifier satisfying $\int_{\rr^{d_1}} \varkappa(x),dx = 1$, and such that for each $k \geq 1$ there exists a constant $C_k > 0$ with $|\nabla_x^k \varkappa(x)| \leq C_k \varkappa(x)$. For each $n \in \NN^*$, define
\bea
\varkappa_n(x):= n^{d_1} \varkappa(nx).
\eea
Given a function $u(x,y)$, we define the mollification of $u$ for $x$ variable as by 
\beq{mol-u}
u_n(x,y)= u(x,y) * \varkappa_n= \int_{\rr^{d_1}} u(x-x',y) \varkappa(x')dx'.
\eeq

We have the following estimates in \cite[Lemma 4.1]{RX21}.

\begin{lem}\label{lem:mol-est}
Let $u\in C_p^{\vartheta,\dl}$ with $0<\vartheta\leq 2, 0<\dl \leq 1$. Define $u_n$ by \eqref{mol-u}. Then we have 
\beq{u-un}
|u(\cdot, y)- u_n(\cdot, y)|_\infty \leq C_0 n^{-\vartheta} (1+|y|^{\wdh n})\text{ and } |\nabla_x^k u_n(\cdot, y)|_\infty \leq C_0 n^{k-\vartheta}(1+|y|^{\wdh n}),
\eeq
for $k=1,2$ and $C_0$ is a constant independent of $n$.
\end{lem}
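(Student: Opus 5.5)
The proof is a direct computation with the convolution in the $x$-variable only; $y$ stays frozen throughout. I read \eqref{mol-u} with the scaled kernel, $u_n(x,y)=\int_{\rr^{d_1}}u(x-x',y)\varkappa_n(x')\,dx'$. Since the H\"older quotient of $u(\cdot,y)$ is bounded by $C(1+|y|^{\wdh n})$, every estimate below carries this factor, with a constant independent of $n$.

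I will use three elementary kernel facts. I assume $\varkappa$ has finite moments of all orders (e.g.\ compact support), which the construction allows.
\begin{itemize}
\item[(a)] $\int\varkappa_n=1$.
\item[(b)] $\int|x'|^{\vartheta}\varkappa_n(x')\,dx'=n^{-\vartheta}\int|z|^\vartheta\varkappa(z)\,dz$ and $\int|x'|^{\vartheta}|\nabla^k\varkappa_n(x')|\,dx'=n^{k-\vartheta}\int|z|^\vartheta|\nabla^k\varkappa(z)|\,dz$. The last integral is finite because $|\nabla^k\varkappa|\le C_k\varkappa$.
\item[(c)] Cancellations: $\int x'\varkappa_n(x')\,dx'=0$ by radial symmetry, $\int\nabla^k\varkappa_n=0$ for $k\ge1$, and $\int x'_j\,\partial_i\partial_l\varkappa_n(x')\,dx'=0$ by one integration by parts together with $\int\partial\varkappa_n=0$.
\end{itemize}

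\emph{Approximation bound.} I write $u(x,y)-u_n(x,y)=\int[u(x,y)-u(x-x',y)]\varkappa_n(x')\,dx'$.
\begin{itemize}
\item If $0<\vartheta\le1$, I bound the integrand by $C(1+|y|^{\wdh n})|x'|^\vartheta$ and apply (b).
\item If $1<\vartheta\le2$, I first subtract $\nabla_xu(x,y)\cdot x'$, which integrates to zero by (c). The Taylor remainder $u(x-x',y)-u(x,y)+\nabla_xu(x,y)x'$ is bounded by $[\nabla_xu]_{\vartheta-1}|x'|^{\vartheta}$, since $\nabla_x u$ is $(\vartheta-1)$-H\"older. Then (b) again gives $C n^{-\vartheta}(1+|y|^{\wdh n})$.
\end{itemize}

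\emph{Derivative bounds.} I write $\nabla^k_xu_n(x,y)=\int u(x-x',y)\nabla^k\varkappa_n(x')\,dx'$. Using (c), I subtract $u(x,y)$, and also $-\nabla_xu(x,y)x'$ when $\vartheta>1$ and $k=2$. The integrand is then again $O(|x'|^{\vartheta})$, and (b) yields $C n^{k-\vartheta}(1+|y|^{\wdh n})$.

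There is one delicate spot: the case $k<\vartheta$, namely $k=1$ with $\vartheta\in(1,2]$. There $\int x'\partial\varkappa_n\neq0$, so the linear term cannot be removed. Indeed the claimed decay $n^{1-\vartheta}\to0$ fails for $u(x,y)=x$. In this case I would instead write $\nabla_xu_n=(\nabla_xu)*\varkappa_n$ and bound it by $|\nabla_xu(\cdot,y)|_\infty\le C(1+|y|^{\wdh n})$. So the estimate should be read as $C_0\max(1,n^{k-\vartheta})(1+|y|^{\wdh n})$, which is all that is needed when the mollified Poisson solutions are fed into It\^o's formula.

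I expect this bookkeeping of which cancellation is available for which pair $(k,\vartheta)$ to be the only real obstacle. Everything else is the scaling identity (b).
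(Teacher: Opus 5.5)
Your proof is correct. The paper gives no argument of its own: it simply imports \cite[Lemma 4.1]{RX21}. Your computation is the standard one that citation stands for. It combines the scaling of the moments of $\varkappa_n$ and $\nabla^k\varkappa_n$ with the cancellations $\int\nabla^k\varkappa_n=0$, $\int x'\varkappa_n=0$ and $\int x'_j\partial_i\partial_l\varkappa_n=0$. It covers every pair $(k,\vartheta)$ for which the statement is actually true.

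Your objection to the case $k=1$, $\vartheta\in(1,2]$ is also correct. The lemma as printed is false there, and the right exponent is $(k-\vartheta)\vee 0$. Under the paper's literal definition of $C_p^{\vartheta,\dl}$ for $\vartheta\ge 1$, which constrains only $\partial_x^{[\vartheta]}$, your example $u=x$ is admissible. If one also requires $u$ itself to be bounded in $x$, take $u(x,y)=\sin x_1$ instead. Then $\partial_{x_1}u_n(0,y)=\int\cos(x_1')\varkappa_n(x')\,dx'\to 1$, while $n^{1-\vartheta}\to 0$. This does no damage downstream. The paper applies the lemma only with $\vartheta=1$, where $k=1$ gives exactly the $n$-independent bound used in \eqref{uni-Dl}.

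There is one small slip in your kernel assumptions. A smooth, nonnegative, compactly supported $\varkappa\not\equiv 0$ cannot satisfy $|\nabla\varkappa|\le C_1\varkappa$: along a segment leaving the support, Gr\"onwall's inequality would keep $\varkappa$ strictly positive. So ``e.g.\ compact support'' and ``finite because $|\nabla^k\varkappa|\le C_k\varkappa$'' describe incompatible kernels. Either hypothesis alone suffices for your step (b). With the paper's kernel, which is necessarily everywhere positive (e.g.\ a normalized $e^{-\sqrt{1+|x|^2}}$), the finite-moment assumption you added is genuinely needed. The bound $|\nabla^k\varkappa|\le C_k\varkappa$ alone permits $\varkappa\propto(1+|x|^2)^{-(d_1+1)/2}$, whose first moment diverges.
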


\begin{rem}\rm{
Again, our setting implies $\vartheta=1,\dl=1,\wdh n=0$ and the associated constant in \lemref{lem:mol-est} depends on $R$, which is denoted by $C_0(R)$. For any fixed $R>0$, since $C_0(R)<\infty$, we have $C_0(R)\e\to 0$.}
\end{rem}

\section{Smoluchowski-Kramers Approximation in Random Environments} \label{sec:SK}
In this section, we prove \thmref{thm:SK-avg} by applying \thmref{thm:KP}. In \secref{sec:rep-int}, we derive a representation of the solution $x^m$ via an integration-by-parts argument. In \secref{sec:rep-poi}, we further develop a new representation of $x^m$ in the form \eqref{xtm-fH} using solutions to the associated Poisson equations. Several auxiliary lemmas are established in \secref{sec:aux-SK}. Finally, the proof of \thmref{thm:SK-avg} is completed in \secref{sec:proof-SK-avg}.

\subsection{First Representation of the Solution $x_t^m$}\label{sec:rep-int}
We rewrite the equation for $v_t^m$ as 
\bea
\la(x_t^m, y_t^\e) v_t^m dt = b(x_t^m, y_t^\e)dt + \sg(x_t^m, y_t^\e)dW_t - m dv_t^m.
\eea
By the assumptions that $\la(x,y)$ is invertiable in (A2), we obtain
\bea
dx_t^m = v_t^m dt = \la^{-1}(x_t^m, y_t^\e) b(x_t^m, y_t^\e)dt + \la^{-1}(x_t^m, y_t^\e)dW_t - m\la^{-1}(x_t^m, y_t^\e) dv_t^m.
\eea
Equivalently, the following integral form holds
\beq{xt-m}\barray
x_t^m \ad = x_0 + \int_0^t \la^{-1}(x_s^m, y_s^\e) b(x_s^m, y_s^\e)ds \\
\aad\quad + \int_0^t \la^{-1}(x_s^m, y_s^\e)\sg(x_s^m, y_s^\e) dW_s-\int_0^t m \la^{-1}(x_s^m, y_s^\e)dv_s^m.  
\earray\eeq

For the last term on the right-hand side of \eqref{xt-m}, applying the integration-by-parts formula followed by It\^{o}'s formula to $\lambda^{-1}(x,y)$ yields
\beq{m-la-v}\barray\ad
\int_0^t m \Big[(\la^{-1})_{ij}(x_s^m, y_s^\e)\Big] d(v_s^m)_j \\
\ad = (\la^{-1})_{ij}(x_t^m, y_t^\e) m (v_t^m)_j - (\la^{-1})_{ij}(x_0^m, y_0^\e) m (v_0^m)_j \\
\aad\quad - \int_0^t \frac{\partial}{\partial x_\ell}\Big[(\la^{-1})_{ij}(x_s^m, y_s^\e)\Big] m (v_s^m)_j (v_s^m)_\ell ds \\
\aad\quad - \int_0^t \frac{\partial}{\partial y_\ell} \Big[(\la^{-1})_{ij}(x_s^m, y_s^\e) \Big] m (v_s^m)_j \frac{1}{\e} f_\ell(x_s^m, y_s^\e) ds \\
\aad\quad -\half \int_0^t \frac{\partial}{\partial y_\ell \partial y_k} \Big[(\la^{-1})_{ij}(x_s^m, y_s^\e) \Big] m (v_s^m)_j \frac{1}{\e} g_\ell(x_s^m, y_s^\e) g_k (x_s^m, y_s^\e) ds \\
\aad\quad -\int_0^t \frac{\partial}{\partial y_\ell} \Big[(\la^{-1})_{ij}(x_s^m, y_s^\e) \Big] m (v_s^m)_j \frac{1}{\sqe} \big[g(x_s^m, y_s^\e) d B_s \big]_\ell.
\earray\eeq
To handle the term in the third line of the above equality, namely, 
\bea\ad
\int_0^t \frac{\partial}{\partial x_\ell} \Big[(\la^{-1})_{ij}(x_s^m, y_s^\e) \Big] m (v_s^m)_j (v_s^m)_\ell ds,
\eea
we follow \cite{HMVW15} and consider the product $m v_s^m (m v_s^m)^\top$. An application of It\^{o}'s formula gives
\beq{mv}\barray
d\Big[m v_s^m (mv_s^m)^\top \Big] \ad = \Big[m b(x_s^m, y_s^\e) (v_s^m)^\top - m \la(x_s^m, y_s^\e) v_s^m (v_s^m)^\top \Big]ds \\
\aad\quad + m (\sg(x_s^m, y_s^\e)dW_s) (v_s^m)^\top \\
\aad\quad + \Big[ mv_s^m b(x_s^m, y_s^\e)^\top - m v_s^m (v_s^m)^\top \la^\top(x_s^m, y_s^\e) \Big] ds \\
\aad\quad + mv_s^m \big(\sg(x_s^m, y_s^\e)dW_s \big)^\top + \sg(x_s^m, y_s^\e)\sg^\top(x_s^m, y_s^\e)ds.
\earray\eeq

Define 
\beq{wdt-U}
\wdt \CU_t^m :=\int_0^t m v_s^m b^\top(x_s^m, y_s^\e)ds + \int_0^t m v_s^m (\sg(x_s^m, y_s^\e)dW_s)^\top.
\eeq
We rewrite \eqref{mv} as 
\beq{mvv}\barray
\ad  m v_s^m (v_s^m)^\top ds \big[-\la^\top(x_s^m, y_s^\e) \big] + \big[- \la(x_s^m, y_s^\e) \big] m v_s^m (v_s^m)^\top ds \\
\ad = d \big[ m v_s^m (mv_s^m)^\top \big] - \sg(x_s^m, y_s^m) \sg^\top(x_s^m, y_s^m)ds - d\,\wdt \CU_s^m - d\,(\wdt \CU_s^m)^\top.
\earray\eeq
Denoting $m v_s^m (v_s^m)^\top ds$ by $V$, $-\la(x_s^m, y_s^\e)$ by $A$, and the right-hand side of equation \eqref{mvv} by $\Sg$, then \eqref{mvv} satisfies the so-called Lyapunov function
\beq{lya}
A V + V A^\top =\Sg.
\eeq
Under our assumptions, the real parts of the eigenvalues of $A$ are negative, \eqref{lya} has a unique solution given by
\beq{V}
V = - \int_0^\infty e^{Az} \Sg e^{A^\top z}dz.
\eeq
Then we obtain
\beq{mvv-1}\barray
m v_t^m (v_t^m)^\top dt \ad = -\int_0^\infty e^{-\la(x_t^m, y_t^\e) z } d\Big[m v_t^m (m v_t^m)^\top \Big] e^{-\la^\top(x_t^m, y_t^\e) z} dz \\
\aad\quad + \int_0^\infty e^{-\la(x_t^m, y_t^\e) z} \Big(\sg(x_t^m, y_t^\e)\sg^\top(x_t^m, y_t^\e) dt \Big) e^{-\la^\top(x_t^m, y_t^\e)z} dz \\
\aad\quad + \int_0^\infty e^{-\la(x_t^m, y_t^\e) z} \Big( d\, \wdt \CU_t^m + d\,(\wdt \CU_t^m)^\top \Big) e^{-\la^\top(x_t^m, y_t^\e)z}dz \\
\aad =:  d\,\Sg_t^1 + d\,\Sg_t^2 + d\,\Sg_t^3.
\earray\eeq
For the first term in the last line of \eqref{mvv-1}, we obtain
\bea\ad 
d (\Sg_t^1)_{ij} = -\int_0^\infty \Big(e^{-\la(x_t^m, y_t^\e)z} \Big)_{i k_1} \Big(e^{-\la^\top(x_t^m, y_t^\e)z} \Big)_{k_2 j} dz\, d \Big[ m (v_t^m)_{k_1} (m v_t^m)_{k_2}^\top\Big].
\eea
For the second term, we have $d\,\Sg_t^2 = J(x_t^m, y_t^\e) dt$, where $J: \rr^{d_1} \times \rr^{d_2} \to \rr^{d_1\times d_1}$ is the solution of the following Lyapunov equation 
\bea
J \la^\top + \la J = \sg \sg^\top.
\eea
This identity follows from differentiating the corresponding Lebesgue integrals in 
\bea\ad
\int_0^t \Big[J(x_s^m, y_s^\e) \la^\top(x_s^m, y_s^\e)+ \la(x_s^m, y_s^\e) J(x_s^m, y_s^\e) \Big]ds = \int_0^t \sg(x_s^m, y_s^\e) \sg^\top(x_s^m, y_s^\e) ds.
\eea
For the third term in the last line of \eqref{mvv-1}, using \eqref{wdt-U}, we express the entries of $\Sg_t^3$ as 
\bea
(\Sg_t^3)_{ij} \ad = \int_0^t \int_0^\infty \big(e^{-\la(x_s^m, y_s^\e)z} \big)_{i k_1} \Big\{ \big[m v_s^m b^\top(x_s^m, y_s^\e)\big]_{k_1 k_2} ds \\
\aad\quad + \big[m v_s^m \Big(\sg(x_s^m, y_s^\e)dW_s \Big)^\top \big]_{k_1 k_2} + \big[b(x_s^m, y_s^\e) (m v_s^m)^\top \big]_{k_1 k_2}ds \\
\aad\quad + \big[\sg(x_s^m, y_s^\e) dW_s (m v_s^m)^\top \big]_{k_1 k_2} \Big\}  \big( e^{-\la(x_s^m, y_s^\e)z} \big)_{k_2 j} dz \\

\aad =  \int_0^t \int_0^\infty \big(e^{-\la(x_s^m, y_s^\e)z}\big)_{i k_1} \big( e^{-\la^\top(x_s^m, y_s^\e)z}\big)_{k_2 j}dz \Big\{ \big[m v_s^m b^\top(x_s^m, y_s^\e)\big]_{k_1 k_2}ds  \\
\aad \quad + \big[m v_s^m (\sg(x_s^m, y_s^\e) dW_s)^\top\big]_{k_1 k_2} + \big[ b(x_s^m, y_s^\e) (m v_s^m)^\top \big]_{k_1 k_2} ds \\
\aad \quad + \big[\sg(x_s^m, y_s^\e)dW_s (m v_s^m)^\top\big]_{k_1 k_2} \Big\}.
\eea

Substituting \eqref{mvv-1} for $m v_t^m (v_t^m)^\top dt$ into \eqref{xt-m} and using \eqref{m-la-v}, we obtain
\beq{x-1}\barray
(x_t^m)_i \ad = (x_0)_i + (\wdh \CU_t^m)_i + \int_0^t \big(\la^{-1}(x_s^m, y_s^\e) b(x_s^m, y_s^\e)\big)_i ds \\
\aad\quad + \bigg(\int_0^t \la^{-1}(x_s^\e, y_s^\e) \sg (x_s^m, y_s^\e)dW_s \bigg)_i \\
\aad\quad + \int_0^t \frac{\partial}{\partial x_\ell}\Big[(\la^{-1})_{ij}(x_s^m, y_s^\e)\Big]  J_{j \ell}(x_s^m, y_s^\e) ds \\
\aad\quad + \int_0^t \frac{\partial}{\partial x_\ell} \Big[(\la^{-1}_{ij}(x_s^m, y_s^\e)) \Big]  \\
\aad \qquad\quad \times \bigg\{ \int_0^\infty \big(e^{-\la(x_s^m, y_s^\e)z}\big)_{j k_1} \big(e^{-\la^\top(x_s^m, y_s^\e)}\big)_{k_2 \ell} dz \bigg\}  d \big[(m v_s^m)_{k_1} (m v_s^m)_{k_2} \big] \\
\aad\quad + \int_0^t \frac{\partial}{\partial y_\ell} \Big[(\la^{-1})_{ij}(x_s^m, y_s^\e)\Big]  m (v_s^m)_j \frac{1}{\e} f_\ell(x_s^m, y_s^\e)ds \\
\aad \quad +\half \int_0^t \frac{\partial^2}{\partial y_\ell \partial y_k} \Big[
(\la^{-1})_{ij} (x_s^m, y_s^\e)\Big]  m (v_s^m)_j \frac{1}{\e} g_\ell (x_s^m, y_s^\e) g_k (x_s^m, y_s^\e) ds \\
\aad \quad + \int_0^t \frac{\partial}{\partial y_\ell} \Big[(\la^{-1})_{ij}(x_s^m, y_s^\e)\Big]  m (v_s^m)_j \frac{1}{\sqe} \big[g(x_s^m, y_s^\e)dB_s\big]_\ell\\
\aad =: (x_0)_i + (\wdh \CU_t^m)_i + I_1^\e(t) + I_2^\e(t) + I_3^\e(t) + I_4^\e(t) + I_5^\e(t) + I_6^\e(t) + I_7^\e(t),
\earray\eeq
where $\wdh \CU_t^m =\{(\wdh \CU_t^m)_i\}, i=1,\dots, d_1$ is defined by
\bea
(\wdh \CU_t^m)_i \ad := (\la^{-1})_{ij}(x_t^m, y_t^\e) m (v_t^m)_j - (\la^{-1})_{ij}(x, y) m (v_0^m)_j \\
\aad\quad + \int_0^t \frac{\partial}{\partial x_\ell} \Big[(\la^{-1})_{ij} (x_s^m, y_s^\e) \Big] \\
\aad\quad \times \bigg\{ \int_0^\infty \big(e^{-\la(x_s^m, y_s^\e) z}\big)_{j k_1} \big(e^{-\la^\top(x_s^m, y_s^\e) z}\big)_{k_2 \ell} dz \\
\aad\quad \times \Big(\big[m v_s^m b^\top(x_s^m, y_s^\e)\big]_{k_1 k_2}ds + \big[m v_s^m (\sg(x_s^m, y_s^\e)dW_s)^\top \big]_{k_1 k_2} \\
\aad\quad + \big[b(x_s^m, y_s^\e) (m v_s^m)^\top \big]_{k_1 k_2} ds + \big[\sg(x_s^m, y_s^\e) dW_s (m v_s^m)^\top \big]_{k_1 k_2} \Big) \bigg\}. 
\eea

\begin{rem}\label{rem:me2}\rm{ We note that
\begin{itemize}
\item[(i)] Compared with the representation of $x_t^m$ in \cite{HMVW15}, the expansion of \eqref{x-1} contains three additional terms, namely, $I_5^\e, I_6^\e, I_7^\e$, which arise from the  dependence of the fast  process $y^\e$ in $\la$. 
\item[(ii)] As in \cite[Lemma 3]{HMVW15}, we show that for each $t \in [0,T]$, $\sqrt{m}, v_t^m$ is uniformly bounded (uniformly in $m$ and $\e$), and that $m|v_t^m| \to 0$ in $L^2(\Omega)$ as $m \to 0$. Consequently, in order to ensure that $I_7^\e \to 0$ in $L^2(\Omega; C([0,T]; \rr))$, we require the existence of $\varrho > 0$ such that $\sqrt{m}/\sqrt{\e} = \e^{\varrho} \to 0$ as $\e \to 0$, which leads to the scaling $m = \e^{1+2\varrho}$.

For simplicity, we take $\varrho = \tfrac{1}{2}$, yielding $m = \e^2$. The resulting additional factor of $\sqrt{\e}$ ensures that $I_7^\e$ vanishes in the limit. Moreover, since $m |v_t^m| \to 0$ in $L^2(\Omega)$, it follows that $I_4^\e \to 0$ as $\e \to 0$.
\end{itemize}
}
\end{rem}

\begin{rem}\label{rem:I56}\rm{
    We also note that 
\begin{itemize}
\item[(i)]
For the terms $I_5^\e$ and $I_6^\e$, our scaling yields
\beq{ga-f}\barray\ad
\int_0^t \frac{\partial}{\partial y_\ell}\Big[(\la^{-1})_{ij}(x_s^m, y_s^\e) \Big]m  \frac{1}{\e} f_\ell(x_s^m, y_s^\e) (v_s^m)_j ds \\
\aad = \sqrt{m} \int_0^t \frac{\partial}{\partial y_\ell} \Big[(\la^{-1})_{ij}(x_s^m, y_s^\e) \Big] f_\ell(x_s^m, y_s^\e) d(x_s^m)_j,
\earray\eeq
and
\beq{ga-gg}\barray\ad
\half \int_0^t  \frac{\partial^2}{\partial y_\ell \partial y_k} \Big[(\la^{-1})_{ij}(x_s^m, y_s^\e) \Big] m  \frac{1}{\e} g_\ell (x_s^m, y_s^\e) g_k(x_s^m, y_s^\e) (v_s^m)_j ds \\
\aad = \half \sqrt{m} \int_0^t \frac{\partial^2}{\partial y_\ell \partial y_k} \Big[(\la^{-1})_{ij}(x_s^m, y_s^\e) \Big] g_\ell (x_s^m, y_s^\e) g_k(x_s^m, y_s^\e) d(x_s^m)_j.
\earray\eeq
\item[(ii)] The key observation is that neither $I_5^\e$ nor $I_6^\e$ converges to zero in $L^2(\Omega; C([0,T];\rr))$ as $\e\to 0$, since $\sqrt{m}v_t^m$ is only \textit{bounded} in $L^2(\Omega)$. This motivates rewriting $(v_s^m)_j ds$ as $d(x_s^m)_j$ and transferring the terms in the above identities to the left-hand side of \eqref{x-1} in differential form, thereby obtaining a new representation of the solution $x_t^m$. 
\end{itemize}
}
\end{rem}

\subsection{Second Representation of the Solution}\label{sec:rep-poi}
To derive a representation of $x_t^m$ in the form \eqref{xtm-fH}, we first address the fast component $y_t^\e$ appearing in the terms $I_1^\e$, $I_2^\e$, $I_3^\e$, and $I_4^\e$.

For the term $I_1^\e(t)$, for any fixed $R>0$, we rewrite it as 
\beq{la-b}\barray\ad
\int_0^t (\la^{-1}b)_i (x_s^m, y_s^\e) ds  \\
\aad = \int_0^t (\la^{-1}b)_i (x_s^m, y_s^\e)-(\lbar{\la^{-1}b})_i(x_s^m) ds + \int_0^t (\lbar{\la^{-1}b})_i (x_s^m)ds \\
\aad = \int_0^t \big[(\la^{-1}b)_i(x_s^m, y_s^\e)-(\lbar{\la^{-1}b})_i(x_s^m)\big] \indi_{\{|x_s^m| \leq R\}}ds \\
\aad\quad+ \int_0^t \big[(\la^{-1}b)_i(x_s^m, y_s^\e)-(\lbar{\la^{-1}b})_i(x_s^m)\big] \indi_{\{|x_s^m|> R\}} ds + \int_0^t (\lbar{\la^{-1}b})_i(x_s^m)ds \\
\aad=: I_{1,1}^\e(t)+ I_{1,2}^\e(t) + I_{1,3}^\e(t),
\earray\eeq
where $(\la^{-1}b)_i(x,y):= (\la^{-1}(x,y) b(x,y))_i$ and $\lbar{\la^{-1}}b$ is defined in \eqref{bar-b-sg}.

We introduce a truncation parameter $R$ to ensure that the integrand in $I_{1,1}^\e$ is uniformly bounded with respect to the slow variable $x$; see \remref{rem:trun-R}. The term $I_{1,1}^\e$ is then rewritten using the solution of an associated Poisson equation, while $I_{1,3}^\e$ contributes to the effective small-mass limit.

Let $\phi^R(x,y)=(\phi_1^R(x,y),\dots, \phi_{d_1}^R(x,y))$ be  the solution to the Poisson equation 
\beq{Poi-phi}
\CL_y^x \phi(x, y) = - \big[(\la^{-1}b)(x,y) - (\lbar{\la^{-1}b})(x)\big] \indi_{\{|x|\leq R\}},
\eeq
where the superscript $R$ indicates the dependence on the parameter $R$. By the definition of $(\lbar{\la^{-1}b})(x)$ in \eqref{bar-b-sg}, the right-hand side of \eqref{Poi-phi} satisfies the ``centering condition'' \eqref{cen-cond}. 

By \thmref{thm:Poi}, for each fixed $x\in \rr^{d_1}$ and $R>0$, there exists a unique solution $\phi^R(x,y)$ satisfying the regularity and boundedness properties stated therein. In particular, $\phi^R(x,y)$ is uniformly bounded in both $x$ and $y$ (with bounds depending on $R$, but independent of $m$ and $\e$), and is twice continuously differentiable with respect to $y$. 

To apply It\^{o}'s formula, we mollify $\phi^R$ as described in \secref{sec:Poi}, obtaining a smooth approximation $\phi^{R,n}$. Applying the It\^{o} formula to $\phi_i^{R,n}(x_s^m,y_s^\e)$ yields
\beq{Ito-phi}\barray
\phi_i^{R,n}(x_s^m, y_s^
\e)-\phi_i^{R,n}(x_0, y_0) \ad = \int_0^t \frac{\partial}{\partial x_j}\phi_i^{R,n}(x_s^m, y_s^\e) (v_s^m)_j ds + \frac{1}{\e} \int_0^t \CL_y^{x_s^m} \phi_i^{R,n} (x_s^m, y_s^\e) ds\\ 
\aad\quad + \frac{1}{\sqe} \int_0^t \frac{\partial}{\partial y_\ell} \phi^{R,n}(x_s^m, y_s^\e) [g(x_s^m, y_s^\e)dB_s]_\ell.
\earray\eeq
Multiplying both sides of above equation by $\e$ and using \eqref{Poi-phi}, we obtain 
\beq{ga-b}\barray\ad 
\int_0^t \big[ (\la^{-1}b)_i(x_s^m, y_s^\e) -(\lbar{\la^{-1}b})_i (x_s^m)\big] \indi_{\{|x_s^m| \leq R\}} ds \\
\aad = \e \int_0^t \frac{\partial}{\partial x_j}\phi_i^{R,n} (x_s^m, y_s^\e) d(x_s^m)_j + \sqe \int_0^t \frac{\partial}{\partial y_\ell} \phi_i^{R,n} (x_s^m, y_s^\e) [g(x_s^m, y_s^\e)dB_s]_\ell \\
\aad\quad -\e \big[\phi_i^{R,n}(x_s^m, y_s^\e)- \phi_i^{R,n}(x_0, y_0)\big]+ \int_0^t \CL_y^{x_s^m} \phi_i^R(x_s^m, y_s^\e)- \CL_y^{x_s^m} \phi_i^{R,n}(x_s^m, y_s^\e)ds.
\earray\eeq

In the above, we write $d(x_s^m)_j$ in place of $(v_s^m)_j\,ds$ for the first integral on the second line of \eqref{ga-b}, for the same reason explained in \remref{rem:I56}. Such a term does not vanish in $L^2(\Omega; C([0,T]; \rr))$. The underlying reason is that the discrepancy between a term and its average over the fast process is only of order $O(\e)$, whereas the decay of $|v_s^m|$ occurs at rate $O(m)=O(\e^2)$; see \lemref{lem:mv}. The remaining terms on the right-hand side of \eqref{ga-b} vanish in $L^2(\Omega)$ as $\e \to 0$ and $n \to \infty$, for any fixed $R>0$.

For $I_2^\e(t)$, since $\la^{-1}\sg$ is independent of the fast variable $y$, the term simplifies to 
\beq{ga-sg-W}\barray\ad 
\Big(\int_0^t (\la^{-1}\sg)(x_s^m, y_s^\e)dW_s \Big)_i  = \Big(\int_0^t [\la^{-1}\sg](x_s^m) dW_s \Big)_i.
\earray\eeq

For $I_3^\e(t)$, by the definition of $S$ and $\bar S$ in \eqref{S-xy} and \eqref{bar-S}, we have
\bea \disp
\int_0^t S_i(x_s^m, y_s^\e)ds \ad = 
\int_0^t S_i(x_s^m, y_s^\e) - \bar{S}_i(x_s^m) ds +\int_0 \bar{S}_i(x_s^m)ds =: I_{3,1}^\e(t) + I_{3,2}^\e(t).
\eea
The term $I_{3,2}^\e(t)$ contributes to the effective drift in the small-mass limit, while $I_{3,1}^\e(t)$ is treated in the same manner as $I_{1,1}^\e(t)$. Let $\psi(x,y) = (\psi_i(x,y))_{i=1}^{d_1}$ denote the solution to the following Poisson equation:
\beq{Poi-psi} 
\CL_y^x \psi(x, y)= - \big[ S(x, y) - \lbar{S}(x) \big].
\eeq
By \thmref{thm:Poi}, there exists a  unique solution $\psi$ to \eqref{Poi-psi}. We mollify $\psi$ and denote the resulting smooth approximation by $\psi^{n}$. Applying the It\^{o} formula to $\psi_i^{n}(x_s^m, y_s^\e)$ and rearranging the resulting terms, we obtain
\beq{ga-J}\barray\ad 
\int_0^t \big[S_i(x_s^m, y_s^\e) - \lbar{S}_i(x_s^m)\big] ds \\
\aad = \e \int_0^t \frac{\partial}{\partial x_j}\psi_i^{n} (x_s^m, y_s^\e) d(x_s^m)_j + \sqe \int_0^t \frac{\partial}{\partial y_\ell}\psi_i^{n} (x_s^m, y_s^\e) [g(x_s^m, y_s^\e) dB_s]_\ell \\
\aad \quad - \e \big[\psi_i^{n} (x_s^m, y_s^\e)- \psi_i^{n}(x_0, y_0) \big]+ \int_0^t \CL_y^{x_s^m} \psi_i (x_s^m, y_s^\e) - \CL_y^{x_s^m} \psi_i^{n}(x_s^m, y_s^\e)ds.
\earray\eeq 

Let us now consider the term $I_4^\e$. Define
\bea 
D_{k_1 k_2}^i (x_s^m, y_s^\e)\ad := \frac{\partial}{\partial x_\ell} \big[(\la^{-1}_{ij}(x_s^m, y_s^\e))\big] \bigg[\int_0^\infty \big(e^{-\la(x_s^m, y_s^\e)z}\big)_{j k_1} \big(e^{-\la^\top(x_s^m, y_s^\e)z}\big)_{k_2 \ell} dz \bigg], \\
\lbar{D}_{k_1 k_2}^i(x) \ad:= \int_{\rr^{d_2}} D_{k_1 k_2}^i(x,y)\mu^x(dy), \quad \forall x\in \rr^{d_1}, y\in \rr^{d_2}.
\eea
Therefore, we can rewrite $I_4^\e(t)$ as
\bea\ad\!\!\!\!\!\!
\int_0^t D_{k_1 k_2}^i(x_s^m, y_s^\e) d \big[(m v_s^m)_{k_1} (m v_s^m)_{k_2} \big] \\
\aad\!\!\!\!\!\! = \int_0^t \Big\{D_{k_1 k_2}^i(x_s^m, y_s^\e) -\lbar{D}_{k_1 k_2}^i(x_s^m) \Big\}d \big[(m v_s^m)_{k_1} (m v_s^m)_{k_2}\big] + \int_0^t \lbar{D}_{k_1 k_2}^i(x_s^m)d \big[(m v_s^m)_{k_1} (m v_s^m)_{k_2}\big] \\
\aad\!\!\!\!\!\! =: I_{4,1}^\e(t) + I_{4,2}^\e(t).
\eea
For $I_{4,1}^\e(t)$, we show that it converges to zero in $L^2(\Omega; C([0,T]; \rr))$, using the boundedness of $D_{k_1 k_2}^i$ and $\lbar{D}{k_1 k_2}^i$, together with the fact that $m|v_t^m| \to 0$ in $L^2(\Omega)$ for each $t \in [0,T]$. The term $I{4,2}^\e(t)$ contributes to the integral appearing in the second line of \eqref{xtm-barF}.

To derive a new representation of $x_t^m$, we observe that the coefficients multiplying the differentials $d(x_t^m)_i$, which arise from \eqref{ga-f}, \eqref{ga-gg}, the left-hand side of \eqref{x-1}, and the leading terms on the right-hand sides of \eqref{ga-b} and \eqref{ga-J}, can be combined when expressed in differential form.

To proceed, noting that $\e = \sqrt{m}$, we define the matrix $M_{R,n}(x_s^m, y_s^\e)$ by
\beq{M}\barray
[M_{R,n}]_{ij}(x_s^m, y_s^\e) \ad = \dl_{ij} - \sqrt{m}\frac{\partial}{\partial y_\ell} \big[(\la^{-1})_{ij}(x_s^m, y_s^\e)\big] f_\ell(x_s^m, y_s^\e) \\
\aad\quad - \half\sqrt{m} \frac{\partial^2}{\partial y_\ell \partial y_k} \big[(\la^{-1})_{ij}(x_s^m, y_s^\e)\big] g_\ell (x_s^m, y_s^\e) g_k(x_s^m, y_s^\e) \\
\aad\quad - \sqrt{m} \bigg[\frac{\partial}{\partial x_j} \phi_i^{R,n} (x_s^m, y_s^\e)+ \frac{\partial}{\partial x_j} \psi_i^n(x_s^m, y_s^\e) \bigg],
\earray\eeq
where $\dl_{ij}$ denotes the Kronecker delta function. Hence, for each $x\in \rr^{d_1}$ and $y\in \rr^{d_2}$,  $M_{R,n}(x,y)$ can be expressed in the matrix form
\beq{M-mat}\barray
M_{R,n}(x,y)\ad := I - \sqrt{m}\nabla_y \la^{-1}(x,y)\cdot f(x,y) - \half\sqrt{m}\text{Tr}\big[\nabla_y^2 \la^{-1}(x,y)(gg^\top)(x,y)\big]\\
\aad\quad - \sqrt{m} \nabla_x \phi^{R,n}(x,y) - \sqrt{m} \nabla_x \psi^{n}(x, y).
\earray\eeq

Define
\beq{Dl-xy}\barray
\Dl_{R,n}(x,y)
\ad := \nabla_y \la^{-1}(x,y) \cdot f(x,y) + \half \text{Tr}\big[\nabla_y^2 \la^{-1}(x,y) (gg^\top)(x,y)\big]\\
\aad\quad +\nabla_x \phi^{R,n}(x,y)+ \nabla_x \psi^{n}(x,y).
\earray\eeq
Then $M_{R,n}(x,y)=I-\sqrt{m} \Dl_{R,n}(x,y)$. By \lemref{lem:mol-est} and \assmref{ass:gd-lip}, there exists a constant $C_R>0$ (depending only on $R$) such that
\beq{uni-Dl}
\sup_{x\in \rr^{d_1},y\in \rr^{d_2}} |\Dl_{R,n}(x,y)| \leq C_R.
\eeq
Consequently, for any fixed $R>0$, we have $\sqrt{m}|\Dl_{R,n}(x,y)|\leq \e\, C_R\to 0$ as $\e \to 0$. It implies that the matrix $M_{R,n}(x_s^m, y_s^\e)$ is positive definite for sufficiently small $\e$. In particular, $M_{R,n}(x_s^m, y_s^\e)$ admits an inverse, which is denoted by $M_{R,n}^{-1}(x_s^m, y_s^\e)=([M_{R,n}^{-1}]_{ij}(x_s^m, y_s^\e))$.

By considering the differential forms of \eqref{x-1}, \eqref{ga-f}, \eqref{ga-gg}, \eqref{Ito-phi}, and \eqref{ga-J}, we transfer the terms in \eqref{ga-f} and  \eqref{ga-gg}, as well as the first terms on the right-hand sides of \eqref{Ito-phi} and \eqref{ga-J}, to the left of \eqref{x-1}. Applying the inverse of $M_{R,n}$ and integrating from $0$ to $t$ yield
\beq{xt-j} \barray
(x_t^m)_j \ad = (x_0)_j + (\wdh \CU_t^{M,m})_j + \int_0^t \big[M^{-1}_{R,n} \big]_{ij}(x_s^m, y_s^\e) \Big((\lbar{\la^{-1} b})(x_s^m)\Big)_i ds \\
\aad + \int_0^t \big[M^{-1}_{R,n}\big]_{ij} (x_s^m, y_s^\e) \Big((\la^{-1}\sg)(x_s^m) dW_s \Big)_i + \int_0^t \big[M^{-1}_{R,n} \big]_{ij}(x_s^m, y_s^\e) \lbar{S}_i (x_s^m) ds \\
\aad + \int_0^t  \big[M^{-1}_{R,n} \big]_{ij}(x_s^m, y_s^\e) \lbar{D}_{k_1 k_2}^i(x_s^m) d \big[(m v_s^m)_{k_1} (m v_s^m)_{k_2} \big],
\earray\eeq
for some stochastic process $\wdh \CU_t^{M,m}$.

By decomposing $[M_{R,n}^{-1}]_{i,j}(x,y)= [M_{R,n}^{-1}]_{i,j}(x,y)-\dl_{ij}+\dl_{ij}$ for any $x\in \rr^{d_1},y\in \rr^{d_2}$, the representation \eqref{xt-j} of $x_t^m$ can be further rewritten as  
\beq{xt-j-new}\barray
(x_t^m)_j \ad = (x_0)_j + (\CU_t^{M,m})_j + \int_0^t \dl_{ij} \Big((\lbar{\la^{-1}b})(x_s^m)\Big)_i ds  + \int_0^t \dl_{ij} \Big((\la^{-1}\sg)(x_s^m) dW_s\Big)_i \\
\aad\quad + \int_0^t \dl_{ij} \lbar{S}_i(x_s^m)ds  + \int_0^t \dl_{ij} \lbar{D}_{k_1 k_2}^i(x_s^m) d \big[(m v_s^m)_{k_1} (m v_s^m)_{k_2} \big],
\earray\eeq
where $\CU_t^{M,m}=(\CU_t^{M,m})_{j=1}^{d_1}$ is given by 
\begingroup
\allowdisplaybreaks
\begin{align}\label{U-Mm}
(\CU_t^{M,m})_j & = \int_0^t \big[M^{-1}_{R,n} \big]_{ij}(x_s^m, y_s^\e) d \big[(\la^{-1})_{i\iota}(x_s^m, y_s^\e) m(v_s^m)_\iota \big] \notag \\
& + \int_0^t \big[M^{-1}_{R,n} \big]_{ij}(x_s^m, y_s^\e) \frac{\partial}{\partial x_\ell} \big[(\la^{-1})_{i\iota}(x_s^m, y_s^\e)\big] \notag\\
& \qquad \times \bigg[\int_0^\infty \Big(e^{-\la(x_s^m, y_s^\e)z}\Big)_{\iota k_1} \Big(e^{-\la^\top(x_s^m, y_s^\e)z}\Big)_{k_2 \ell}dz \notag\\
& \qquad \times \Big(\big[m v_s^m b^\top(x_s^m, y_s^\e)\big]_{k_1 k_2}ds + \big[m v_s^m (\sg(x_s^m, y_s^\e)dW_s)^\top \big]_{k_1 k_2} \notag \\
& \qquad + \big[b(x_s^m, y_s^\e) (m v_s^m)^\top \big]_{k_1 k_2} ds + \big[\sg(x_s^m, y_s^\e) dW_s (m v_s^m)^\top \big]_{k_1 k_2} \Big) \bigg] \notag \\
& + \sqe \int_0^t \big[M^{-1}_{R,n} \big]_{ij}(x_s^m, y_s^\e) \frac{\partial}{\partial y_\ell} \big[(\la^{-1})_{i\iota} (x_s^m, y_s^\e)\big]  \e (v_s^m)_\iota [g(x_s^m, y_s^\e) dB_s]_\ell \notag \\
& + \int_0^t \big[M^{-1}_{R,n} \big]_{ij}(x_s^\e, y_s^\e)  \big[(\la^{-1}b)_i(x_s^m, y_s^\e) - (\lbar{\la^{-1}b})_i(x_s^m)\big] \indi_{\{|x_s^m|> R\}} ds \notag \\
& + \sqe \int_0^t \big[M^{-1}_{R,n} \big]_{ij}(x_s^\e, y_s^\e) \frac{\partial}{\partial y_\ell} \phi_i^{R,n}(x_s^m, y_s^\e) [g(x_s^m, y_s^\e)dB_s]_\ell  \\
& + \sqe \int_0^t \big[M^{-1}_{R,n} \big]_{ij}(x_s^m, y_s^\e) \frac{\partial}{\partial y_\ell} \psi_i^n (x_s^\e, y_s^\e)[g(x_s^m, y_s^\e)dB_s]_\ell \notag\\
& - \e \int_0^t \big[M^{-1}_{R,n} \big]_{ij}(x_s^m, y_s^
\e) d \big[\phi_i^{R,n}(x_s^m, y_s^\e) \big] \notag\\
& - \e \int_0^t \big[M^{-1}_{R,n} \big]_{ij}(x_s^m, y_s^\e) d \big[\psi_i^n (x_s^m, y_s^\e)\big] \notag  \\
& + \int_0^t \big[M^{-1}_{R,n} \big]_{ij}(x_s^\e, y_s^\e)  \big[ \CL_y^{x_s^m} \phi_i^R (x_s^m, y_s^\e) - \CL_y^{x_s^m} \phi_i^{R,n} (x_s^m, y_s^\e) \big] ds \notag\\
& + \int_0^t \big[M^{-1}_{R,n} \big]_{ij}(x_s^\e, y_s^\e) \big[\CL_y^{x_s^m} \psi_i(x_s^m, y_s^\e)- \CL_y \psi_i^n(x_s^m, y_s^\e)\big] ds \notag\\ 
& + \int_0^t \big[M^{-1}_{R,n} \big]_{ij}(x_s^m, y_s^\e) \Big( D_{k_1 k_2}^i(x_s^m, y_s^\e) - \lbar{D}_{k_1 k_2}^i(x_s^m)\Big) d \big[(m v_s^m)_{k_1} (m v_s^m)_{k_2} \big] \notag\\
& + \int_0^t \Big[\big[M^{-1}_{R,n} \big]_{ij}(x_s^m, y_s^\e) - \dl_{ij}\Big] \big((\lbar{\la^{-1}b})(x_s^m)\big)_i ds \notag\\
& + \int_0^t \Big[\big[M^{-1}_{R,n} \big]_{ij}(x_s^m, y_s^\e) - \dl_{ij}\Big] \Big((\la^{-1}\sg)(x_s^m) dW_s\Big)_i \notag\\
& + \int_0^t \Big[\big[M^{-1}_{R,n} \big]_{ij}(x_s^m, y_s^\e) - \dl_{ij}\Big] \lbar{S}_i(x_s^m) ds \notag \\
&  + \int_0^t \Big[\big[M^{-1}_{R,n} \big]_{ij}(x_s^m, y_s^\e) -\dl_{ij} \Big] \lbar{D}_{k_1 k_2}^i(x_s^m) d \big[(m v_s^m)_{k_1} ( mv_s^m)_{k_2} \big] \notag\\
& =: \sum_{\tau=1}^{15} U_{\tau}^\e(t). \notag
\end{align}
\endgroup
The terms $\wdh \CU_t^{M,m}, \CU_t^{M,m}$, and $U_\tau^\e$ all depend on the parameter $R$ and $n$. For brevity, we suppress this dependence throughout the text.

We will show that $\CU_t^{M,m}$ converges to $0$ in $L^k(\Omega; C([0,T]; \rr))$ for $k=1$ or $k=2$, and hence in probability. It suffices to verify that each term $U_\tau^\e(t)$, $\tau = 1,\dots,15$, converges to zero in $L^k(\Omega; C([0,T]; \rr))$ by first letting $\e \to 0$, then $n \to \infty$, and finally $R \to \infty$. The corresponding arguments are given in \lemref{lem:U-tau}--\lemref{lem:U-phi-psi} in \secref{sec:aux-SK}.

\subsection{Auxiliary Lemmas}\label{sec:aux-SK}

The following key lemma shows that for any $t\in [0,T]$, $\sqrt{m}|v_t^m|$ is bounded and $m|v_t^m| \to 0$ in $L^2(\Omega)$ as $m\to 0$.

In contrast to the proof of \cite[Lemma 3]{HMVW15}, we do not assume that $x_t^m$ is bounded. Thus, the coefficients $b$ and $\sg$, evaluated at $x_t^m$, may be unbounded. Therefore, the argument in \cite{HMVW15} is not directly applicable, and new arguments are required.

\begin{lem}\label{lem:mv}
Suppose that \assmref{ass:mom-bdd} and \assmref{ass:gd-lip} hold. For each $t\in [0,T]$, the quantity $\sqrt{m} |v_t^m|$ is bounded in $L^2(\Omega)$. More precisely, there exists a constant $C_{T,\la_0,|v_0|}>0$, independent of $m$, such that 
\beq{bdd-mvs}
\EE\Big[m|v_t^m|^2\Big] \leq C_{T,\la_0,|v_0|}.
\eeq
Consequently,
\beq{0-mvs}
\EE\Big[|mv_t^m|^2\Big] \leq C_{T,\la_0,|v_0|}m.
\eeq
and hence $m|v_t^m| \to 0$ in $L^2(\Omega)$, thus in probability. Moreover, we also have
\beq{4-mv}
\EE|mv_t^m|^4 \leq C_{T,\la_0,|v_0|}.
\eeq
\end{lem}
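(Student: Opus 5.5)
The plan is to work with the kinetic-energy functional $E_t := \frac{m}{2}|v_t^m|^2$ and apply It\^{o}'s formula to it, using the velocity equation in \eqref{xvy}. Since $d v_t^m = \frac1m[b - \la v_t^m]dt + \frac1m \sg\, dW_t$, we get
\begin{equation*}
d\Big(m|v_t^m|^2\Big) = \Big[2\qv{v_t^m, b(x_t^m,y_t^\e)} - 2\qv{\la(x_t^m,y_t^\e) v_t^m, v_t^m} + \tfrac{1}{m}\text{Tr}(\sg\sg^\top)(x_t^m,y_t^\e)\Big]dt + 2\qv{v_t^m, \sg(x_t^m,y_t^\e) dW_t}.
\end{equation*}
Setting $\Phi_t := \EE[m|v_t^m|^2]$, I will use the coercivity in (A2), $\qv{\la v,v}\geq \la_0|v|^2$, together with Young's inequality $2|\qv{v,b}|\leq \la_0|v|^2 + \la_0^{-1}|b|^2$. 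Combined with the linear growth $|b(x,y)|\leq C(1+|x|)$ from (A1), the boundedness of $\sg$ from (A3), and \assmref{ass:mom-bdd} to control $\EE|b(x_t^m,y_t^\e)|^2\leq C(1+\sup_t\EE|x_t^m|^2)\leq C$, this should yield the differential inequality
\begin{equation*}
\frac{d}{dt}\Phi_t \leq -\frac{\la_0}{m}\Phi_t + \frac{C}{m},
\end{equation*}
after a localization argument that kills the martingale term in expectation (stop at $\tau_N=\inf\{t:|v_t^m|\geq N\}$, then let $N\to\infty$ by Fatou). The point is that the friction term enters as $-\la_0|v|^2 = -\frac{\la_0}{m}\cdot m|v|^2$, so the dissipation rate is of the same order $1/m$ as the It\^{o} correction $\frac1m\text{Tr}(\sg\sg^\top)$. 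Solving by variation of constants gives $\Phi_t\leq e^{-\la_0 t/m}m|v_0|^2 + \frac{C}{\la_0}(1-e^{-\la_0 t/m})\leq m|v_0|^2 + C/\la_0$, which is \eqref{bdd-mvs}. Multiplying by $m$ then gives \eqref{0-mvs} immediately, and hence the convergence $m|v_t^m|\to0$ in $L^2(\Omega)$ and in probability.

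For \eqref{4-mv}, I would either apply the same scheme to $(m|v_t^m|^2)^2$, or work directly with $\Psi_t=\EE|mv_t^m|^4 = m^2\EE(m|v|^2)^2$. It\^{o} applied to $(m|v|^2)^2$ gives a drift of the form
\begin{equation*}
2m|v|^2\Big[2\qv{v,b}-2\qv{\la v,v}+\tfrac1m\text{Tr}(\sg\sg^\top)\Big] + \tfrac{4}{m}\cdot m^2|\sg^\top v|^2.
\end{equation*}
The bad terms are dominated as before: $m|v|^2\cdot|v||b|\leq \frac{\la_0}{2}m|v|^4+Cm|b|^2|v|^2$, and $m|v|^2|b|^2\leq \frac{\la_0}{4m}(m|v|^2)^2+ Cm|b|^4$. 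This should produce $\frac{d}{dt}\EE(m|v|^2)^2\leq -\frac{c}{m}\EE(m|v|^2)^2+\frac{C}{m}$, which uses the fourth moment of $x^m$ from \assmref{ass:mom-bdd}. Consequently $\EE(m|v|^2)^2$ is bounded, so in fact $\EE|mv_t^m|^4\leq Cm^2$, which is stronger than \eqref{4-mv}.

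The main obstacle is making sure the constants remain uniform in $m$ once $b$ is unbounded in $x$. The estimate must not be closed via Gr\"onwall with rate $1/m$ growing: absorbing the cross term $\qv{v,b}$ has to use only half the dissipation. This is exactly where the moment bound on $x^m$ replaces the boundedness assumption used in \cite{HMVW15}. The localization for the stochastic integral is routine, since $\sg$ is bounded and $v^m$ has finite moments for fixed $m$.
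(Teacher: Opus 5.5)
Your proof is correct, but it departs from the paper's at the step that matters. Both arguments apply It\^o's formula to the kinetic energy and use the coercivity in (A2). You absorb the cross term with the $\la_0$-weighted Young inequality $2|\qv{v,b}|\le \la_0|v|^2+\la_0^{-1}|b|^2$. This spends only half of the dissipation $\frac{2\la_0}{m}\,m|v|^2$ and leaves a scalar linear inequality with rate $\la_0/m$ and forcing $C/m$, which variation of constants closes.

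The paper instead writes the Duhamel formula with the factor $e^{-2\la_0(t-s)/m}$ and splits the cross term with the unweighted inequality $ac\le (a^2+c^2)/2$. This leaves a Volterra inequality with kernel $\CK(r)=\frac{2}{m}e^{-2\la_0 r/m}$, which it closes with the generalized Gr\"onwall inequality of \thmref{thm:g-Gron} and the resolvent bound of \lemref{lem:CH}.

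Your route is self-contained and gives the explicit bound $m|v_0|^2+C/\la_0$ for every $\la_0>0$. It is also more robust. The exact resolvent of the paper's kernel is $\frac{2}{m}e^{2(1-\la_0)r/m}$, whose integral over $[0,t]$ stays bounded as $m\to 0$ only if $\la_0>1$. Moreover, the estimate $\sum_n\CK^{*,n}(t)\le\CK(t)\exp(\int_0^t\CK)$ claimed in \lemref{lem:CH} fails for decaying exponentials: for $\CK(t)=e^{-t}$ the left side is identically $1$, while the right side is $e^{-t}\exp(1-e^{-t})<1$. Weighting Young's inequality by $\la_0$ is exactly what removes this restriction.

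For the fourth moment, the paper only sketches an analogous computation with $m^4|v|^4/4$ and claims boundedness. Your computation with $(m|v|^2)^2$ gives the sharper bound $\EE|mv_t^m|^4\le Cm^2$. One slip: the It\^o correction there is $4|\sg^\top v|^2=\frac{4}{m}\,m|\sg^\top v|^2$, not $\frac4m\,m^2|\sg^\top v|^2$. This is harmless, since it is at most $\frac{C}{m}\,m|v|^2$ and is absorbed by Young's inequality or by the second-moment bound. For rigor, apply It\^o's formula to $e^{\la_0 t/m}m|v_t^m|^2$ (respectively its square) before stopping at $\tau_N$ and passing to the limit with Fatou, rather than localizing a differential inequality for $\Phi_t$.
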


\begin{proof} 
Let $\Psi(v)= m|v|^2/2$ denote the kinetic energy. Applying the It\^{o} formula to $\Psi$ yields
\bea
d\Psi(v_t^m) \ad = \bigg\{ \frac{\text{Tr}(\sg(x_t^m, y_t^\e)\sg^\top(x_t^m, y_t^\e))}{2m}+ b^\top(x_t^m, y_t^\e) v_t^m - (\la(x_t^m,y_t^\e) v_t^m)^\top v_t^m \bigg\} dt \\
\aad\quad + (\sg(x_t^m, y_t^\e)v_t^mdW_t)^\top v_t^m.
\eea
By assumption (A2), we have
\bea
(\la(x_t^m, y_t^\e) v_t^m)^\top v_t^m\geq \la_0 |v_t^m|^2.
\eea
It follows that
\bea
d\Psi(v_t^m) \ad \leq \bigg\{\frac{\text{Tr}(\sg(x_t^m, y_t^\e)\sg^\top(x_t^m, y_t^\e))}{2m}+ b^\top(x_t^m, y_t^\e) v_t^m  - \frac{2\la_0}{m} \Psi(v_t^m) \bigg\} dt \\
\aad \quad + (\sg(x_t^m, y_t^\e) dW_t)^\top v_t^m.
\eea

Applying the Duhamel's principle and then taking expectations, we obtain
\beq{phi-v}\barray
\EE\Psi(v_t^m) \ad \leq e^{-\frac{2\la_0}{m}t}\, \EE \Psi(v_0^m) +\EE \int_0^t e^{-\frac{2\la_0}{m}(t-s)}  b^\top (x_s^m, y_s^\e) v_s^m ds \\
\aad\quad + \EE\int_0^t e^{-\frac{2\la_0}{m}(t-s)} \frac{\text{Tr}(\sg(x_s^m,y_s^\e)\sg^\top(x_s^m,y_s^\e))}{2m}ds.
\earray\eeq
For the second term on the right-hand side of \eqref{phi-v}, the elementary
inequality $a c\leq (a^2+c^2)/2$ for all $a,c\in \rr$ yields
\beq{bv}\barray\disp 
\EE\int_0^t e^{-\frac{2 \la_0}{m}(t-s)} b^\top (x_s^m, y_s^\e) v_s^m ds \ad \leq \EE\int_0^t |b(x_s^m, y_s^\e)|^2 ds + \EE \int_0^t e^{-\frac{4\la_0}{m}(t-s)} \frac{2}{m} \Psi(v_s^m) ds \\
\aad \leq C + \EE\int_0^t e^{-\frac{4\la_0}{m}(t-s)}\frac{2}{m} \Psi(v_s^m) ds,
\earray\eeq
where the last inequality follows from \assmref{ass:mom-bdd} and the linear growth of $b$ in (A1).

For the last term in \eqref{phi-v}, 
the boundedness of $\sg$ implies that 
\beq{Ephi-3}\barray\ad 
\EE\Big|\int_0^t e^{-\frac{2\la_0}{m}(t-s)}\frac{\text{Tr}\big[\sg(x_s^m, y_s^\e)\sg^\top(x_s^m, y_s^\e)\big] }{2m} ds \Big| \leq \frac{C}{2m}\int_0^t e^{-\frac{2\la_0}{m}(t-s)} ds\leq C_{T,\la_0}.
\earray\eeq

Consequently, combining the estimates \eqref{phi-v}, \eqref{bv}, and \eqref{Ephi-3} yields
\bea
\EE\Psi(v_t^m) \ad \leq \frac{m}{2}e^{-\frac{2\la_0}{m}t}|v_0|^2+C_{T,\la_0,|v_0|} + \int_0^t e^{-\frac{4\la_0}{m}(t-s)}\frac{2}{m} \EE\Psi(v_s^m)ds \\
\aad \leq C_{T,\la_0,|v_0|} + \int_0^t \frac{2}{m}e^{-\frac{2\la_0}{m}(t-s)} \EE \Psi(v_s^m)ds,
\eea
where we note that, in the last line above, $C_{T,\la_0,|v_0|}$ denotes a generic constant that may vary from line to line. By the generalized \gronwall  inequality in \thmref{thm:g-Gron} together with \lemref{lem:CH}, taking $\CK(t,s)=\CK(t-s)=2e^{-2\la_0(t-s)/m}/m$, we obtain
\bea
\EE\Psi(v_t^m) \ad \leq C_{T,\la_0,|v_0|} + \int_0^t \CH(t,s)  C_{T,\la_0,|v_0|} ds\\
\aad \leq C_{T,\la_0,|v_0|} +\int_0^t \CK(t-s) \exp \bigg(\int_0^{t-s} \CK(r)dr\bigg) C_{T,\la_0,|v_0|} ds\\
\aad \leq C_{T,\la_0,|v_0|}\Big(1+e^{2/\la_0}/\la_0\Big).
\eea
Thus, \eqref{bdd-mvs} follows, and consequently \eqref{0-mvs} holds.

For \eqref{4-mv}, we take $\Psi(v)=m^4|v|^4/4$ and repeat an analogous argument to that used in \eqref{bdd-mvs}. It yields the desired bound. This completes the proof of the lemma.
\end{proof}

To proceed, we provide estimates for the matrix $M_{R,n}^{-1}(x,y)-I$. 
\begin{lem}\label{lem:est-M}
Fix $R>0$. For each $x\in \rr^{d_1}$ and $y \in \rr^{d_2}$, let $M_{R,n}(x,y)$ and $\Dl_{R,n}(x,y)$ be defined in \eqref{M-mat} and \eqref{Dl-xy}, respectively. Then 
\beq{M-I}
|M_{R,n}^{-1}(x,y)-I| \leq \frac{\sqrt{m} |\Dl_{R,n}(x,y)|}{1-\sqrt{m}|\Dl_{R,n}(x,y)|}.
\eeq
Moreover, for any fixed $R>0$, the estimate
\beq{MRn-bdd}
|M_{R,n}^{-1}(x,y)|\leq 2,
\eeq
hold for all sufficiently small $\e$ (equivalently, for all sufficiently small m).
\end{lem}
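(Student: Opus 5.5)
The plan is to use the Neumann series for $(I-\sqrt{m}\Dl_{R,n})^{-1}$, which is justified by the uniform bound \eqref{uni-Dl}. Fix $R>0$ and write $M_{R,n}(x,y)=I-\sqrt{m}\,\Dl_{R,n}(x,y)$. By \eqref{uni-Dl} we have $\sqrt{m}\,|\Dl_{R,n}(x,y)|\le \e C_R$ uniformly in $(x,y)$ and in $n$. Since $\e C_R\to 0$, there is $\e_R>0$ with $\e C_R<1$ (indeed $\le 1/2$) for all $\e<\e_R$; we work in this range. Because the operator norm is submultiplicative, $|(\sqrt{m}\Dl_{R,n})^k|\le (\sqrt{m}|\Dl_{R,n}|)^k$. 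Hence the series $\sum_{k\ge0}(\sqrt{m}\Dl_{R,n})^k$ converges absolutely in $\rr^{d_1\times d_1}$.

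Next we identify the sum with the inverse. Multiplying the partial sums by $I-\sqrt{m}\Dl_{R,n}$ telescopes to $I-(\sqrt{m}\Dl_{R,n})^{N+1}$, which tends to $I$. So $M_{R,n}$ is invertible and
\[
M_{R,n}^{-1}(x,y)=\sum_{k=0}^\infty \big(\sqrt{m}\,\Dl_{R,n}(x,y)\big)^k .
\]
This also settles invertibility directly, without the positive-definiteness remark made before the lemma.

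For \eqref{M-I}, subtract the $k=0$ term and apply the triangle inequality:
\[
|M_{R,n}^{-1}-I|\le \sum_{k\ge1}\big(\sqrt{m}|\Dl_{R,n}|\big)^k=\frac{\sqrt{m}|\Dl_{R,n}|}{1-\sqrt{m}|\Dl_{R,n}|}.
\]
For \eqref{MRn-bdd}, write $|M_{R,n}^{-1}|\le 1+|M_{R,n}^{-1}-I|\le 1/(1-\sqrt{m}|\Dl_{R,n}|)$. When $\e\le \e_R':=\min(\e_R,1/(2C_R))$ we have $\sqrt{m}|\Dl_{R,n}|\le 1/2$, so the right side is at most $2$ for all $x,y,n$.

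The main point to check carefully is the uniformity behind \eqref{uni-Dl}, and in particular its independence of $n$. The mollified terms $\nabla_x\phi^{R,n}$ and $\nabla_x\psi^n$ are bounded by \lemref{lem:mol-est} with $k=1$, $\vartheta=1$, $\wdh n=0$, giving bounds $C_0(R)n^{0}$ that do not depend on $n$. The terms $\nabla_y\la^{-1}\cdot f$ and $\mathrm{Tr}[\nabla_y^2\la^{-1}gg^\top]$ are bounded by (A2), (B1), (B2). Here one uses that $\la\in C_b^{1,2}$ together with $\la\ge\la_0$ gives bounded $y$-derivatives of $\la^{-1}$, through the identities $\nabla\la^{-1}=-\la^{-1}(\nabla\la)\la^{-1}$ and $|\la^{-1}|\le \la_0^{-1}$. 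Once this is confirmed, the threshold $\e_R'$ depends only on $R$, as the statement requires.
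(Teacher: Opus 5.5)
Your proposal is correct and follows the paper's proof. Both use \eqref{uni-Dl} to get $\sqrt{m}|\Dl_{R,n}|<1$ and hence the Neumann series, then submultiplicativity and the geometric sum for \eqref{M-I}, and finally $|M_{R,n}^{-1}|\le 1/(1-\sqrt{m}|\Dl_{R,n}|)\le 1/(1-\e C_R)\le 2$ for small $\e$; your extra checks (the telescoping identification of the series with the inverse, and the $n$-independence of $C_R$ via \lemref{lem:mol-est} with $k=\vartheta=1$) cover details the paper takes for granted, and they are sound.
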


\begin{proof}
By \eqref{uni-Dl}, for sufficiently small $\e$, we have $\sqrt{m}|\Dl_{R,n}(x,y)| <1$. Thus, $M_{R,n}$ is invertiable. The Neumann series expansion for the matrix inverse yields
\bea
M_{R,n}^{-1}(x,y)-I \ad = \big(I-\sqrt{m} \Dl_{R,n}(x,y)\big)^{-1} -I = \sum_{k=1} ^\infty \big(\sqrt{m}\Dl_{R,n}(x,y) \big)^k.
\eea
Using the submultiplicativity of the induced matrix norm, we obtain
\bea\ad
|M_{R,n}^{-1}(x,y)-I| \leq  \sum_{k=1}^\infty m^{k/2}|\Dl_{R,n}(x,y)|^k = \frac{\sqrt{m}|\Dl_{R,n}(x,y)|}{1-\sqrt{m}|\Dl_{R,n}(x,y)|},
\eea
which proves \eqref{M-I}. Consequently,
\bea
|M_{R,n}^{-1}(x,y)| \ad \leq |I|+ |M_{R,n}^{-1}(x,y)-I|  \leq 1+ \frac{\sqrt{m}|\Dl_{R,n}(x,y)|}{1-\sqrt{m}|\Dl_{R,n}(x,y)|} =\frac{1}{1-\sqrt{m}|\Dl_{R,n}(x,y)|}.
\eea
By \eqref{uni-Dl}, we have 
\bea\ad
|M_{R,n}^{-1}(x,y)| \leq \frac{1}{1-\sqrt{m} C_R}.
\eea
Hence, for any fixed $R>0$, since $\sqrt{m}C_R = \e C_R \to 0$ as $\e \to 0$, we have $1-\sqrt{m}C_R \geq \tfrac{1}{2}$ for all sufficiently small $\e$, which implies
\bea
|M_{R,n}^{-1}(x,y)| \leq 2.
\eea
This completes the proof.
\end{proof}

The following lemmas devote to establishing that each $\CU_\tau^\e$ in \eqref{U-Mm} converges to zero in the space $L^2(\Omega;C([0,T];\rr))$. Proofs of \lemref{lem:U-tau}, \lemref{lem:U-dl}, and \lemref{lem:U-phi-psi} are postponed to \appref{app:lem55}--\appref{app:lem57}, respectively.

\begin{lem}\label{lem:U-tau}
For each $\e>0$ and any fixed $R>0$, let $e_1, e_2: \rr^{d_1} \times \rr^{d_2} \to \rr$ be continuous functions satisfying $|e_1(x,y)| \leq C_{e_1}(1+|x|)$ and $|e_2(x,y)| \leq C_{e_2}$ for some constant $C_{e_1}, C_{e_2} > 0$.  Then,  for each $i,j, \iota =1,\dots, d_1$ and $\ell=1,\dots, d_2$, we have
\beq{inner-mv}
\lim_{\e \to 0} \EE \bigg[ \bigg( \sup_{t\in [0,T]} \Big|\int_0^t \big[M^{-1}_{R,n} \big]_{ij}(x_s^m, y_s^\e) e_1(x_s^m, y_s^\e) m (v_s^m)_i ds \Big|\bigg)^2 \bigg]=0,
\eeq
\beq{mv-W}
\lim_{\e \to 0} \EE \bigg[\bigg(\sup_{t \in [0,T]} \bigg| \int_0^t  \big[M^{-1}_{R,n} \big]_{ij}(x_s^m, y_s^\e) e_1(x_s^m, y_s^\e) m (v_s^m)_i d(W_s)_j \bigg| \bigg)^2 \bigg]=0,
\eeq
and 
\beq{ev-B}
\lim_{\e \to 0} \EE \bigg[\bigg(\sup_{t\in [0,T]} \bigg| \sqe \int_0^t \big[M^{-1}_{R,n} \big]_{ij}(x_s^m, y_s^\e) e_2(x_s^m, y_s^\e) \sqrt{m} (v_s^m)_\iota d(B_s)_\ell \bigg|\bigg)^2 \bigg]=0.
\eeq
\end{lem}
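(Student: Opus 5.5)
All three limits follow the same pattern. The integrand is controlled by the uniform bound $|M_{R,n}^{-1}|\le 2$ from \lemref{lem:est-M} (valid for $\e$ small, $R$ fixed), the growth bounds on $e_1,e_2$, the moment bounds of \assmref{ass:mom-bdd}, and the velocity estimates of \lemref{lem:mv}. We then move the supremum inside (Lebesgue integral) or remove it by Doob's inequality (stochastic integrals), and apply Cauchy--Schwarz.

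For \eqref{inner-mv}, bound the supremum by $\int_0^T 2 C_{e_1}(1+|x_s^m|)\, m|v_s^m|\,ds$. By Cauchy--Schwarz in time,
\begin{equation*}
\EE\Big[\Big(\sup_{t\le T}|\cdots|\Big)^2\Big]\le 4C_{e_1}^2 T\int_0^T \EE\big[(1+|x_s^m|)^2 |m v_s^m|^2\big]ds .
\end{equation*}
Split the expectation by Cauchy--Schwarz into $(\EE(1+|x_s^m|)^4)^{1/2}(\EE|mv_s^m|^4)^{1/2}$. The first factor is bounded by \assmref{ass:mom-bdd}. For the second, \eqref{4-mv} gives only boundedness, so instead interpolate: $\EE|mv|^4\cdot$ is not directly small, but $(\EE|mv|^4)^{1/2}$ can be replaced via H\"older with exponents chosen so that one factor is $\EE|mv_s^m|^2\le Cm$ from \eqref{0-mvs}. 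Concretely, write
\begin{equation*}
\EE\big[(1+|x|)^2|mv|^2\big]\le \big(\EE(1+|x|)^{2p'}\big)^{1/p'}\big(\EE|mv|^{2p}\big)^{1/p},
\end{equation*}
and interpolate $\EE|mv|^{2p}$ between $\EE|mv|^2\le Cm$ and $\EE|mv|^4\le C$ for $1<p<2$. This gives $\EE|mv|^{2p}\le C m^{2-p}$, so the whole bound is $O(m^{(2-p)/p})\to0$ uniformly in $s\le T$. Dominated convergence in $s$ then gives \eqref{inner-mv}.

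For \eqref{mv-W}, use Doob's (or BDG) inequality:
\begin{equation*}
\EE\sup_t|\cdots|^2\le 4\,\EE\int_0^T |M^{-1}_{R,n}|^2 e_1^2 |mv_s^m|^2 ds\le 16C_{e_1}^2\int_0^T\EE\big[(1+|x_s^m|)^2|mv_s^m|^2\big]ds .
\end{equation*}
This is exactly the quantity already shown to vanish.

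For \eqref{ev-B}, Doob's inequality similarly gives the bound $4\e\cdot 4C_{e_2}^2\int_0^T \EE[m|v_s^m|^2]\,ds\le C\e\, T\, C_{T,\la_0,|v_0|}$ by \eqref{bdd-mvs}, which tends to $0$. Here boundedness of $e_2$ means no moment of $x$ is needed, and the explicit factor $\sqe$ supplies the decay.

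The main obstacle is the first step: \eqref{4-mv} yields only boundedness, while the unbounded factor $1+|x|$ prevents using \eqref{0-mvs} directly. The interpolation between the second and fourth moments of $mv$ is what I would try first. If one prefers to avoid it, a fallback is to localize on $\{\sup_t|x_t^m|\le K\}$, where \eqref{0-mvs} applies directly, and to control the complement by Chebyshev together with \assmref{ass:mom-bdd} and the bound \eqref{4-mv}. The uniformity in $n$ is harmless, since \eqref{MRn-bdd} does not depend on $n$.
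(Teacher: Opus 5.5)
Your proposal is correct and follows the paper's proof. Cauchy--Schwarz in time for \eqref{inner-mv}, and BDG/Doob for \eqref{mv-W}, together with $|M_{R,n}^{-1}|\le 2$, reduce both to $\int_0^T\EE[(1+|x_s^m|)^2|mv_s^m|^2]\,ds$, while \eqref{ev-B} follows from \eqref{bdd-mvs} and the explicit factor $\e$. The only difference is in how that expectation is bounded. The paper uses exactly your fallback: it splits on $\{1+|x_s^m|^2\le K\}$, applies Chebyshev, then lets $\e\to0$ followed by $K\to\infty$. Your H\"older interpolation $\EE|mv_s^m|^{2p}\le(\EE|mv_s^m|^2)^{2-p}(\EE|mv_s^m|^4)^{p-1}\le Cm^{2-p}$ is equally valid. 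It also gives an explicit rate $O(m^{(2-p)/p})$, uniform in $s$, and avoids the auxiliary limit in $K$.
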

The proof of \lemref{lem:U-tau} is deferred to \appref{app:lem55}. By applying \lemref{lem:U-tau} with appropriate choices of the functions $e_1$ and $e_2$, we conclude that $U_2^\e$ and $U_3^\e$ converge to zero in $L^2(\Omega; C([0,T]; \rr))$.

For the term $U_{11}^\e$, we first expand $d\big[(m v_s^m){k_1}(m v_s^m){k_2}\big]$ using \eqref{mv}, rewriting it in a form analogous to $U_2^\e$, and then apply a similar argument as in \lemref{lem:U-tau}. This yields $U_{11}^\e \to 0$ in $L^2(\Omega; C([0,T]; \rr))$ as $\e \to 0$.

\begin{lem}\label{lem:M-e1-R}
Suppose that \assmref{ass:mom-bdd} holds. Let $e_1: \rr^{d_1} \times \rr^{d_2} \to \rr$ be a  continuous function such that $|e_1(x,y)| \leq C_{e_1}(1+|x|)$ for some constant $C_{e_1}$. Then for each $i,j=1,\dots, d_1$, we have
\beq{la-b-R}
\lim_{R \to \infty} \lim_{\e\to 0} \EE\bigg|\sup_{t\in [0,T]} \int_0^t [M_{R,n}^{-1}]_{ij}(x_s^m, y_s^\e) e_1(x_s^m,y_s^\e) \indi_{\{|x_s^m| > R\}} ds \bigg|^2=0.
\eeq
\end{lem}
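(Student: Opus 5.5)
The plan is to bound the supremum crudely by the integral of the absolute value and then to use the moment bounds of \assmref{ass:mom-bdd} together with Chebyshev's inequality. The limit in $R$ will be handled through the tail of the slow process.

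\textbf{Step 1: crude pathwise bound.} For fixed $R>0$, \lemref{lem:est-M} gives $|M_{R,n}^{-1}(x,y)|\le 2$ for all sufficiently small $\e$, uniformly in $(x,y)$ and $n$. So each entry satisfies $|[M_{R,n}^{-1}]_{ij}|\le 2$. Pathwise, for all small $\e$,
\[
\sup_{t\in[0,T]}\Big|\int_0^t [M_{R,n}^{-1}]_{ij}(x_s^m,y_s^\e)e_1(x_s^m,y_s^\e)\indi_{\{|x_s^m|>R\}}ds\Big|
\le 2C_{e_1}\int_0^T (1+|x_s^m|)\indi_{\{|x_s^m|>R\}}ds .
\]

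\textbf{Step 2: second moment.} Squaring and applying Cauchy--Schwarz in time, the expectation in \eqref{la-b-R} is bounded by
\[
4C_{e_1}^2T\int_0^T \EE\big[(1+|x_s^m|)^2\indi_{\{|x_s^m|>R\}}\big]ds .
\]
By Cauchy--Schwarz in $\Omega$ and Chebyshev's inequality, the integrand is at most
\[
\big(\EE(1+|x_s^m|)^4\big)^{1/2}\,\PP(|x_s^m|>R)^{1/2}
\le \big(\EE(1+|x_s^m|)^4\big)^{1/2}\,\frac{(\EE|x_s^m|^2)^{1/2}}{R}.
\]
Alternatively, one can use directly that $(1+|x|)^2\indi_{\{|x|>R\}}\le (1+|x|)^4/R^2$ for $R\ge1$.

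\textbf{Step 3: uniformity and the limits.} By \eqref{mom-bdd} with $k=2,4$, the bounds $\sup_{t}\EE|x_t^m|^k\le C$ are uniform in $m\in(0,m_0)$. The whole expression is therefore at most $C T^2/R$, or $CT^2/R^2$ with the alternative bound, with $C$ independent of $\e$ and $n$. Taking $\limsup_{\e\to0}$ gives a bound of order $1/R$, and this tends to $0$ as $R\to\infty$.

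The only delicate point is the order of limits. The bound $|M^{-1}_{R,n}|\le2$ holds only for $\e$ small depending on $R$, through the condition $\e C_R\le 1/2$. Since $\e\to0$ is taken first with $R$ fixed, this causes no difficulty. Otherwise the argument is routine, and the key input is the uniform moment assumption, \assmref{ass:mom-bdd}.
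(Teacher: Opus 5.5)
Your proof is correct and follows essentially the same route as the paper. Both arguments bound $|M_{R,n}^{-1}|\le 2$ via \lemref{lem:est-M} for small $\e$ with $R$ fixed, apply Cauchy--Schwarz in time and then in $\Omega$ together with Chebyshev to extract a factor $1/R$ from $\PP(|x_s^m|>R)^{1/2}$, and conclude from the uniform moments in \assmref{ass:mom-bdd}; the only cosmetic difference is that you use $\sup_t\EE|x_t^m|^k$ where the paper uses $\EE\sup_t|x_t^m|^k$.
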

\begin{proof}
By \lemref{lem:est-M} and H\"{o}lder inequality, we have 
\bea\ad
\EE\bigg[\sup_{t\in [0,T]} \int_0^t [M_{R,n}^{-1}]_{ij}(x_s^m, y_s^\e) e_1(x_s^m, y_s^\e) \indi_{\{|x_s^m|>R\}} ds\bigg]^2 \\
\aad \leq \frac{4 T C_{e_1}^2}{(1-\sqrt{m}C_R )^2}   \EE \int_0^T (1+|x_s^m|^2) \indi_{\{|x_s^m|>R\}}ds \\
\aad \leq \frac{4 T C_{e_1}^2}{R \big( 1-\sqrt{m}C_R \big)^2}  \bigg\{ \EE \bigg (1+\sup_{s\in [0,T]}|x_s^m|^4 \bigg) \bigg\}^{1/2} \bigg\{ \EE \bigg(1+\sup_{s\in [0,T]}|x_s^m|^2 \bigg) \bigg\}^{1/2}.
\eea
Letting $\e\to 0$ and then $R\to \infty$, the desired limit follows.
\end{proof}
Applying \lemref{lem:M-e1-R} with suitable choices of $e_1$ to $U_4^\e$, we conclude that $U_4^\e \to 0$ in $L^2(\Omega; C([0,T]; \rr))$.

For the terms $U_\tau^\e$, $\tau = 12,13,14,15$, involving $\big[M^{-1}{R,n}\big]{ij}(x_s^m, y_s^\e) - \dl_{ij}$, we derive the following estimates.

\begin{lem}\label{lem:U-dl}
Suppose that \assmref{ass:mom-bdd} and \assmref{ass:gd-lip} hold.  Let  $e_1: \rr^{d_1} \times \rr^{d_2} \to \rr$ be a function satisfying $|e_1(x,y)|\leq C_{e_1}(1+|x|)$ for some constant $C_{e_1}>0$. Then, for each $i,j=1,\dots d_1$, we obtain
\beq{M-I-ds}
\lim_{\e \to 0} \EE\Big|\sup_{t\in [0,T]} \int_0^t \big( [M_{R,n}^{-1}]_{ij}(x_s^m, y_s^\e)-\dl_{ij}\big) e_1(x_s^m, y_s^\e) ds \Big|^2 =0
\eeq
and
\beq{M-I-dW} 
\lim_{\e \to 0} \EE\Big|\sup_{t\in [0,T]} \int_0^t \big( [M_{R,n}^{-1}]_{ij}(x_s^m, y_s^\e)-\dl_{ij}\big) e_1(x_s^m, y_s^\e) dW_s^i \Big|^2 =0
\eeq 
\end{lem}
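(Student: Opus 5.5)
The plan is to reduce both limits to the pointwise bound of \lemref{lem:est-M} and the moment bounds in \assmref{ass:mom-bdd}. Fix $R>0$ and $n$. By \eqref{uni-Dl} and \eqref{M-I}, for all sufficiently small $\e$ (so that $\sqrt{m}C_R\le \tfrac12$) we have, uniformly in $(x,y)$,
\begin{equation*}
\big|[M_{R,n}^{-1}]_{ij}(x,y)-\dl_{ij}\big| \leq |M_{R,n}^{-1}(x,y)-I| \leq \frac{\sqrt{m}\,C_R}{1-\sqrt{m}\,C_R}\leq 2C_R\,\e .
\end{equation*}
This deterministic factor of order $\e$ is what produces the decay. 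The remaining work is to check that the other factor, $e_1(x_s^m,y_s^\e)$, has second moments bounded uniformly in $\e$.

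For \eqref{M-I-ds}, I will take the supremum inside: $\sup_{t\le T}\big|\int_0^t(\cdots)ds\big|\le \int_0^T |[M_{R,n}^{-1}]_{ij}-\dl_{ij}|\,|e_1|\,ds$. Then Cauchy--Schwarz in time and the bound above give
\begin{equation*}
\EE\Big|\sup_{t\in[0,T]}\int_0^t(\cdots)ds\Big|^2 \leq T\,(2C_R\e)^2 C_{e_1}^2\,\EE\int_0^T(1+|x_s^m|)^2ds \leq 8T^2C_R^2C_{e_1}^2\,\e^2\Big(1+\sup_{s\in[0,T]}\EE|x_s^m|^2\Big).
\end{equation*}
By \eqref{mom-bdd} with $k=2$, this bound is $C(R,T)\e^2\to0$ as $\e\to0$.

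For \eqref{M-I-dW}, the integrand $\big([M_{R,n}^{-1}]_{ij}(x_s^m,y_s^\e)-\dl_{ij}\big)e_1(x_s^m,y_s^\e)$ is adapted and continuous in $s$, so the stochastic integral is a continuous square-integrable martingale. Doob's $L^2$ maximal inequality (or Burkholder--Davis--Gundy) together with the It\^o isometry gives
\begin{equation*}
\EE\sup_{t\le T}\Big|\int_0^t(\cdots)dW_s^i\Big|^2\leq 4\,\EE\int_0^T\big|[M_{R,n}^{-1}]_{ij}-\dl_{ij}\big|^2|e_1|^2ds\leq 16C_R^2C_{e_1}^2\e^2\int_0^T(1+\EE|x_s^m|^2)\,ds .
\end{equation*}
This is again $O(\e^2)$ by \assmref{ass:mom-bdd}. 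The square-integrability of the integrand, which the martingale argument needs, follows from the same bound.

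The only delicate point is the order of limits and which constants depend on which parameters. The constant $C_R$ in \eqref{uni-Dl} depends on $R$, and through $\nabla_x\phi^{R,n}$ and $\nabla_x\psi^n$ it could a priori also depend on $n$. By \lemref{lem:mol-est} with $\vartheta=1$, $k=1$, we have $|\nabla_x u_n|_\infty\le C_0(R)$ uniformly in $n$, so $C_R$ is in fact independent of $n$. In any case $R$ and $n$ are held fixed while $\e\to0$, so the factor $\e^2$ is enough. The threshold ``$\e$ sufficiently small'' also depends on $R$, but this causes no difficulty since the limit is taken in $\e$ first. I do not expect a genuine obstacle beyond this bookkeeping.
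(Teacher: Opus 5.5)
Your proposal is correct and follows essentially the same route as the paper's proof. Both arguments combine the uniform $O(\e)$ bound on $|M_{R,n}^{-1}-I|$ from \lemref{lem:est-M} and \eqref{uni-Dl} with the moment bound of \assmref{ass:mom-bdd}, using Cauchy--Schwarz in time for the $ds$-integral and BDG (your Doob $L^2$ inequality plays the same role) for the $dW$-integral; the only blemish is that, since $(1+|x|)^2\le 2(1+|x|^2)$, the constant $16$ in your martingale bound should be $32$, which does not affect the conclusion.
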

The proof of \lemref{lem:U-dl}  is postponed to \appref{app:lem56}. Applying \lemref{lem:U-dl} with appropriate choices of $e_1$, we deduce that $U_\tau^\e \to 0$ in $L^2(\Omega; C([0,T]; \rr))$ for $\tau = 12,13,14$. For $U_{15}^\e$, we expand $d\big[(m v_s^m){k_1}(m v_s^m){k_2}\big]$ using \eqref{mv}, and then apply \lemref{lem:U-dl} to obtain $U_{15}^\e \to 0$ in $L^2(\Omega; C([0,T]; \rr))$ as $\e \to 0$.

We next turn to the terms involving $\phi_i^{R,n}$ and $\psi_i^n$, namely $U_\tau^\e$ for $\tau = 5,6,7,8,9,10$.

\begin{lem}\label{lem:U-phi-psi}
Suppose that \assmref{ass:mom-bdd} holds. Fix $R>0$. Let $\phi_i^R(x,y)$ and $\psi_i(x,y)$ denote the solutions of Poisson equations \eqref{Poi-phi} and \eqref{Poi-psi}, respectively. Their mollifications are denoted by $\phi_i^{R,n}(x,y)$ and $\psi_i^n(x,y)$. Then, for each $i,j=1,\dots,d_1$ and $\ell =1,\dots, d_2$
\beq{e3n-B}
\lim_{\e \to 0}
\EE\bigg| \sup_{t\in [0,T]} \sqe \int_0^t \big[M^{-1}_{R,n} \big]_{ij}(x_s^m, y_s^\e)\frac{\partial}{\partial y_\ell} \phi_{i}^{R,n}(x_s^m, y_s^\e) \big[g(x_s^m, y_s^\e)dB_s \big]_\ell \bigg|^2 =0,
\eeq
\beq{d-e3n}
\lim_{\e \to 0} \EE \bigg|\sup_{t\in [0,T]} \e \int_0^t \big[M^{-1}_{R,n} \big]_{ij}( x_s^m, y_s^\e) d \big[\phi_{i}^{R,n}(x_s^m, y_s^\e) \big] \bigg|^2 =0,
\eeq
and
\beq{e3n-CL}
\lim_{n \to \infty} \sup_{\e>0}\EE\bigg| \sup_{t\in [0,T]} \int_0^t \CL_y^{x_s^m} \phi_{i}^R(x_s^m, y_s^\e) -\CL_y^{x_s^m} \phi_{i}^{R,n}(x_s^m, y_s^\e)ds \bigg|^2 =0.
\eeq
Moreover, the above results remain valid if $\phi_i^{R,n}$ is replaced by $\psi_i^n$.
\end{lem}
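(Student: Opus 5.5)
We prove the three limits in \lemref{lem:U-phi-psi} separately. The key inputs are the uniform bound $|M_{R,n}^{-1}|\le 2$ from \lemref{lem:est-M}, the polynomial-in-$y$ bounds of \thmref{thm:Poi}, the mollification estimates of \lemref{lem:mol-est}, and uniform-in-$\e$ moment bounds $\sup_{\e}\sup_{t\le T}\EE|y_t^\e|^k<\infty$ for the fast process. The moment bounds for $y^\e$ follow in the standard way from (B1)--(B2) via a Lyapunov function $|y|^{2k}$. The dissipativity \eqref{prod-yf} gives a drift $\frac{1}{\e}(\text{negative} + C)$, and the $1/\e$ only accelerates relaxation. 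Throughout, $R$ and $n$ are fixed, so every Poisson-equation constant is some $C_0(R)$ or $C_1(R)$ times $n$-dependent factors. These are finite once $\e\to0$ is taken first, which is exactly the order required.

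For \eqref{e3n-B}, we apply Doob's (or BDG) maximal inequality. This gives the bound
\[
4\e\,\EE\int_0^T \big|[M^{-1}_{R,n}]_{ij}\big|^2\,\big|\nabla_y\phi_i^{R,n}\big|^2\,|g|^2\,ds.
\]
Mollification in $x$ preserves the bound $|\nabla_y\phi^{R,n}(x,y)|\le C_0(R)(1+|y|^{\wdh n})$ from \eqref{u-der-y}, and $g$ is bounded. Combined with $|M^{-1}_{R,n}|\le 2$ and the $y^\e$ moments, this gives a bound $C(R)\e\to0$.

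For \eqref{d-e3n}, we must avoid the naive expansion of $d\phi_i^{R,n}$. By \eqref{Ito-phi}, that differential contains $\frac1\e\CL_y\phi^{R,n}\,ds$, and after multiplying by $\e$ this term does not vanish. Instead we integrate by parts:
\[
\e\int_0^t M^{-1}\,d\phi = \e\big[M^{-1}\phi\big]_0^t - \e\int_0^t \phi\,dM^{-1} - \e\,\langle M^{-1},\phi\rangle_t.
\]
The boundary term is $O(\e)$ in $L^2$ uniformly in $t$, because $\phi^{R}$ and hence $\phi^{R,n}$ are bounded for fixed $R$ (here $\wdh n=0$ as in \remref{rem:trun-R}; otherwise we use the moments). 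For the remaining terms, note that $M^{-1}-I=O(\sqrt m)$ with $\sqrt m=\e$. Its differential therefore carries a prefactor $\e$ in front of $\frac1\e(\cdots)ds+\frac1{\sqe}(\cdots)dB+\nabla_x(\cdots)v\,ds$. The result is an $O(1)$ drift, an $O(\sqe)$ martingale, and a term $\e\,v\,ds=\e\,dx$. The $\e\,dx$ piece is controlled, after multiplication by the extra $\e$, using $\EE[m|v|^2]\le C$ from \lemref{lem:mv}, since $\e^2|v|=\e\cdot\sqrt m|v|$. The quadratic-covariation term is $\e\cdot O(\e\cdot \frac1\e)=O(\e)$. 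Overall we get $C(R,n)\sqe\to0$.

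\textbf{Main obstacle.} This step requires $\nabla_x^2$ and $\nabla_y\nabla_x$ bounds on $\phi^{R,n}$, $\psi^n$, and on the $y$-derivatives of $\la^{-1}$ up to third order, all entering through $dM^{-1}$. We would take them from \lemref{lem:mol-est}, which gives $C_0(R)n$, and from (A2). Since these bounds are finite for fixed $n,R$, they are harmless. If the third $y$-derivatives of $\la^{-1}$ are unavailable, a fallback is to mollify $M^{-1}$ in $y$ as well.

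For \eqref{e3n-CL}, the bound is uniform in $\e$, so we avoid the $\sup_t$ by bounding the integrand in absolute value. Since $\CL_y^x$ acts only in $y$ with coefficients $f(x,y)$ and $G(x,y)$, we have
\[
\CL_y^x\phi^R-\CL_y^x\phi^{R,n}=\int\big[\CL_y^{x}-\CL_y^{x-x'}\big]\phi^R(x-x',y)\varkappa_n(x')dx' + \big(\CL_y^{\cdot}\phi^R(\cdot,y)\big)-\big(\CL_y^{\cdot}\phi^R(\cdot,y)\big)*\varkappa_n.
\]
In the second difference, $\CL_y^{x}\phi^R(x,y)=-Z_1^R(x,y)$ is Lipschitz in $x$, so \lemref{lem:mol-est} gives the bound $C_0(R)n^{-1}(1+|y|^{\wdh n})$. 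The first term uses H\"older continuity of $f,g$ in $x$ (assumed via $f,g\in C_b^{\vartheta,\dl}$ in \thmref{thm:Poi}) together with the polynomial bound on $\nabla_y\phi,\nabla_y^2\phi$. This yields $C n^{-\vartheta}(1+|y|^{\wdh n})$. Squaring, integrating in time, and using the uniform $y^\e$ moments gives $\sup_\e(\cdot)\le C(R)T^2n^{-2\vartheta}\to0$. The same arguments apply verbatim with $\psi_i^n$, where $Z_2$ is bounded and globally Lipschitz and no truncation is needed.
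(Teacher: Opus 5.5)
Your arguments for \eqref{e3n-B} and \eqref{e3n-CL} are fine. The first is the paper's BDG argument. For the second, commuting $\CL_y^x$ with the $x$-mollification is a valid alternative to the paper's direct bound on $\nabla_y^\ell\phi_i^R-\nabla_y^\ell\phi_i^{R,n}$ via \lemref{lem:mol-est}; it uses $x$-H\"older continuity of $f$, $g$, $Z_1^R$ instead of that of $\nabla_y^2\phi_i^R$. The gap is in \eqref{d-e3n}.

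Your integration by parts needs $M^{-1}_{R,n}(x_s^m,y_s^\e)$ to be a semimartingale whose differential comes from It\^o's formula. That requires $M_{R,n}$ to be $C^1$ in $x$ and $C^2$ in $y$. But $M_{R,n}$ contains $f$, $gg^\top$ and $\nabla_y^2\la^{-1}$ explicitly. So you would need $\nabla_x f$, $\nabla_y^2 f$, $\nabla_x(gg^\top)$, $\nabla_y^2(gg^\top)$, and four (not three) $y$-derivatives of $\la^{-1}$. Under (B1)--(B2), $f$ and $g$ are only bounded and continuous, and (A2) gives only $\la\in C_b^{1,2}$. These derivatives need not exist, so saying the bounds are ``finite for fixed $n,R$'' misses the point.

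Your fallback of mollifying $M^{-1}_{R,n}$ into some $\widetilde M$ does not close the gap either. It leaves the error $\e\int_0^t (M^{-1}_{R,n}-\widetilde M)\,d\phi_i^{R,n}$. The It\^o expansion of this error again contains $\int_0^t (M^{-1}_{R,n}-\widetilde M)\,\CL_y^{x_s^m}\phi_i^{R,n}\,ds$, with no small prefactor. You can only control it by noticing $M^{-1}_{R,n}-\widetilde M=O(\e)$ pointwise. That observation already finishes the proof, with no integration by parts needed.

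The missing idea is that the naive expansion you set out to avoid is harmful only for the identity part of $M^{-1}_{R,n}$. Split $M^{-1}_{R,n}=I+(M^{-1}_{R,n}-I)$, as the paper does.

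The identity part is an exact differential: $\e\int_0^t d\phi_j^{R,n}=\e\,[\phi_j^{R,n}(x_t^m,y_t^\e)-\phi_j^{R,n}(x_0,y_0)]$. This is $O(\e)$ uniformly in $t$, since $\phi^{R,n}$ is bounded for fixed $R$.

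For the remainder, expand $d\phi_i^{R,n}$ by It\^o's formula. By \eqref{M-I}, $|M^{-1}_{R,n}-I|\le 2C_R\e$ for small $\e$. This has three effects:
\begin{itemize}
\item it cancels the $1/\e$ in front of $\CL_y^{x}\phi_i^{R,n}$, leaving an $O(\e)$ drift;
\item it makes the $dB$-integral $O(\e^{3/2})$ after BDG;
\item it turns the $\nabla_x\phi_i^{R,n}\cdot v_s^m\,ds$ term into $O(\e)\cdot\sqrt m\,|v_s^m|$, which vanishes in $L^2$ by \lemref{lem:mv}.
\end{itemize}
Only pointwise bounds on $M^{-1}_{R,n}$ are used, so no regularity of $M_{R,n}$ is required.
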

\begin{proof}
The proof is postponed to \appref{app:lem57}.
\end{proof}

For the term $U_1^\e$, we can decompose it as
\bea\ad
U_1^\e = \int_0^t [M_{R,n}^{-1}]_{ij}(x_s^m, y_s^\e) d \big[(\la^{-1})_{i\iota}(x_s^m, y_s^\e) m(v_s^m)_\iota \big] =U_{1,1}^\e(t)+U_{1,2}^\e(t),
\eea
where
\bea 
U_{1,1}^\e(t)\ad := \int_0^t [M_{R,n}^{-1}]_{ij}(x_s^m, y_s^\e) m(v_s^m)_\iota d[(\la^{-1})_{i\iota}(x_s^m, y_s^\e)],\\
 U_{1,2}^\e(t)\ad := \int_0^t [(M_{R,n}^{-1})]_{ij}(x_s^m, y_s^\e) (\la^{-1})_{i\iota}(x_s^m, y_s^\e)d(mv_s^m)_\iota.
\eea
Using arguments analogous to those in \lemref{lem:U-tau}--\lemref{lem:U-phi-psi}, we can show that $U_{1,1}^\e$ and $U_{1,2}^\e$ both converge to zero in $L^2(\Omega; C([0,T];\rr))$ as $\e\to 0$. The details are omitted.

\subsection{Proof of \thmref{thm:SK-avg}} \label{sec:proof-SK-avg}
To prove \thmref{thm:SK-avg}, it remains to verify conditions of  \thmref{thm:KP}. We rewrite \eqref{xt-j-new} as the following compact form
\beq{xt-comp}
x_t^m = x_0 + \CU_t^{M,m} + \int_0^t \bar{F}(x_t^m) d\CH_t^m.
\eeq
where $\bar F: \rr^{d_1} \to \rr^{d_1\times (1+n_1+1+d_1^2)}$ is defined by
\bea
\bar F(x):= \big((\lbar{\la^{-1}b})(x), (\la^{-1}\sg)(x), \lbar{S}(x), \bar{F}^1(x), \dots, \bar F^{d_1}(x) \big).
\eea
Here $\lbar{S}: \rr^{d_1} \to \rr^{d_1}$ is given component-wise by 
\bea\ad 
\lbar{S}_i(x) = \int_{\rr^{d_2}} \frac{\partial}{\partial x_\ell} \big[(\la_{ij}^{-1}(x,y))\big] J_{j\ell}(x,y) \mu^x(dy),
\eea
and the components $\bar F^i(x): \rr^{d_1} \to \rr^{d_1 \times d_1}$ are defined by
\bea\ad 
\bar F^i_{k_1 k_2}(x):= \int_{\rr^{d_2}} \frac{\partial}{\partial x_\ell} \big[\la_{ij}^{-1}(x,y)\big] \bigg\{ -\int_0^\infty \Big(e^{-\la(x,y)z}\Big)_{j k_1} \Big(e^{-\la^\top(x,y)z}\Big)_{k_2 \ell} dz \bigg\} \mu^x(dy),
\eea
for $k_1,k_2 =1,\dots d_1$. 

Moreover, $\CH_t^m$ and $\CH_t$, with paths in $C([0,T];\rr^{1+n_1+1+d_1^2})$, are defined as  
\bea
\CH_t^m=\begin{bmatrix}t\\W_t\\ t\\ (m v_t^m)_1 (m v_t^m)- (m v_0)_1 (m v_0) \\ \vdots \\ (m v_t^m)_{d_1} (m v_t^m)- (m v_0)_d (m v_0)
\end{bmatrix},
\quad
\CH_t = \begin{bmatrix}t \\W_t\\ t\\ 0\\ \vdots \\ 0
\end{bmatrix}.
\eea
By \lemref{lem:mv}, we have $\CH^m \to \CH$ in probability in $C([0,T]; \rr^{1+n_1+1+d_1^2})$ as $\e \to 0$. Hence, we obtain $(\CU^{M,m}, \CH^m) \to (0,\CH)$ in probability in $C([0,T]; \rr^{d_1} \times \rr^{1+n_1+1+d_1^2})$. Consequently, condition $\text{(KP1)}$ in \thmref{thm:KP} is satisfied. 

We now verify the condition (KP2) in \thmref{thm:KP}. We seek the Doob-Meyer decomposition of $\CH_t^m$ and show that the total variations are stochastically bound, uniformly in $m$. Denote by $\CA_t^m$ the bounded-variation part. Using the expression $d\,[m v_s^m (mv_s^m)^\top]$ in \eqref{mv} and noting that the stochastic integrals are local martingales, we obtain
\bea
\CA_t^m = \begin{bmatrix}
t \\ 0 \\ t\\ (\wdt \CA_t^m)_1 \\ \vdots\\ (\wdt\CA_t^m)_{d_1}
\end{bmatrix}
\eea
where 
\beq{wdt-Am}\barray\ad
((\wdt \CA_t^m)_1, \dots, (\wdt \CA_t^m)_{d_1})\\
\aad = \int_0^t b(x_s^m, y_s^\e) m(v_s^m)^\top ds + \int_0^t m v_s^m b(x_s^m, y_s^m)^\top ds - \int_0^t \la(x_s^m, y_s^\e) mv_s^m (v_s^m)^\top ds \\
\aad  - \int_0^t m v_s^m (\la(x_s^m, y_s^\e) v_s^m)^\top ds + \int_0^t \sg(x_s^m, y_s^\e) \sg^\top(x_s^m, y_s^\e) ds.
\earray\eeq

It remains to show that $\CA_t^m$ is stochastically bounded. For the first two terms, H\"{o}lder's inequality and the fact that $mv_s^m \to 0$ in $L^2(\Omega)$ yield that, as $\e \to 0$,
\bea
\EE|b(x_s^m, y_s^\e) mv_s^m|
\ad \leq \Big(\EE|b(x_s^m, y_s^\e)|^2\Big)^{1/2}\Big(\EE|mv_s^m|^2\Big)^{1/2} \\
\ad \leq C \bigg(1+\EE\sup_{s\in [0,T]} |x_s^m|^2 \bigg)^{1/2} \Big(\EE|m v_s^m|^2\Big)^{1/2} \to 0.
\eea
Hence, the first two terms in \eqref{wdt-Am} vanish in probability. For the remaining terms, it suffices to show that 
$\EE|\sg(x_s^m, y_s^\e) \sg^\top(x_s^m, y_s^\e)|$ and $\EE|m (v_s^m)_i (v_s^m)^\top|$ are bounded uniformly in $m$ for $i=1,\dots, d_1$. By the boundedness of $\sg$ and \lemref{lem:mv}, we have 
\bea
\EE|\sg(x_s^m, y_s^\e) \sg^\top(x_s^m, y_s^\e)| \ad \leq C \text{ and }
\EE |m (v_s^m)_i v_s^m|  \leq \EE |m v_s^m|^2 \leq C.
\eea
Therefore, by the Chebyshev inequality, $\{V_t(\CA^m)\}$ is stochastically bounded, and $\CH_t^m$ satisfies condition (KP2). Consequently, we proved $x_t^m \to \bar x_t$ in probability as $\e \to 0$. This completes the proof of \thmref{thm:SK-avg}.

\section{Example: Brownian Particle in a Stochastic Heat Bath}\label{sec:exm-num}
\subsection{Brownian Particle in a Stochastic Heat Bath}\label{sec:BM-shb}
We consider a colloidal particle with small but nonzero mass $m$ suspended in a cylinder filled with a viscous fluid. The surrounding fluid acts as a heat bath, exerting both dissipative (frictional) and fluctuating (noisy) forces on the particle. The particle dynamics are given  by
\beq{exm}
\left\{\barray
dx_t^m \ad = v_t^m dt \\
dv_t^m \ad = \bigg\{ \frac{F(x_t^m)}{m}-\frac{\kappa_B \TT(y_t^\e)}{m D(x_t^m)} v_t^m  \bigg\} dt + \frac{\sqrt{2} \kappa_B \TT(y_t^\e)}{m \sqrt{D(x_t^m)}} dW_t \\
dy_t^\e \ad = \frac{1}{\e}f(x_t^m, y_t^\e)dt + \frac{\sg}{\sqe} dB_t,
\earray\right.
\eeq
where $m=\e^2$ denotes the small mass, $\e$ is the time scale separation parameter, 
$\kappa_B$ is the Boltzmann constant, and $\TT\cd$ represents the temperature of the stochastic heat bath.

In prior works such as \cite[Section 4.1]{HMVW15} and \cite{VHBWB10,BVHWB11}, the temperature $\TT\cd$ is assumed to be constant. However, micro-scale systems such as colloidal suspensions, microfluidic flows, and active fluids are frequently far from thermal equilibrium. In such environments, the heat bath itself may exhibit stochastic fluctuations that lead to time-varying thermal properties.

In this work, we allow the environmental properties, specifically the temperature in \eqref{exm}, to depend on a fast-varying stochastic process $y_t^\epsilon$. This process captures local fluid fluctuations arising from thermal conduction noise, optical trapping, random energy exchange, and the nonequilibrium behavior inherent in crowded fluids and active matter. Accordingly, $y_t^\epsilon$ represents rapidly varying environmental fluctuations and can be modeled by an Ornstein-Uhlenbeck process or, more generally, by a stochastic differential equation.

For fixed $x\in \rr^{d_1}$, the fast-varying random environment $y^\e$ in \eqref{exm} has a unique invariant measure $\mu^x(dy)$. Define $
\lbar{\TT^{-1}}(x): = \int_{\rr^{d_2}} 1/\TT(y) \mu^x(dy)$.
By \thmref{thm:SK-avg}, the limiting dynamics, as $\e \to 0$, becomes
\bea\ad 
dx_t = \bigg\{\frac{D(x_t) F(x_t)}{\kappa_B}\lbar{\TT^{-1}}(x_t) + \nabla_x D(x_t)\bigg\} dt + \sqrt{2 D(x_t)}dW_t.
\eea
The noise-induced drift $S(x)=\nabla_x D(x)$ coincides with the result in \cite[Example 4.1]{HMVW15}. When the temperature is constant $\TT\cd\equiv \TT_0$, the small-mass limit reduces to the classical limit:
\bea\ad
d\wdh x_t = \bigg\{\frac{D(\wdh x_t) F(\wdh x_t)}{\kappa_B \TT_0} + \nabla_x D(\wdh x_t)\bigg\} dt + \sqrt{2D(\wdh x_t)} dW_t.
\eea

\subsection{Numerical Experiments}\label{sec:num}
We present a numerical experiment for the system in \eqref{exm}. For simplicity, we set the Boltzmann constant $\kappa_B=1.0$ and the constant temperature $\TT_0 =1$. The coefficients are chosen as $
F(x) =-x, D(x)=1+0.05 \cos(2\pi x), \TT(y)=1.0+0.5\tanh(y)$, $
f(x,y) =-\alpha \tanh(0.5y)+ \beta \tanh(0.5x)$.
Thus, $\nabla_x D(x)=-0.1 \pi \sin(2 \pi x)$. We set parameters $\al=\beta=\sg=1.0$, terminal time $T=1.0$, initial values $x_0^m=v_0^m=y_0^\e=x_0=\wdh{x}_0=0$. We take Euler-Maruyama scheme for the simulation of stochastic differential equations. For this choice of $f(x,y)$, for any fixed $x\in \rr^{d_1}$, the fast-varying process with frozen slow variable has a unique Gibbsian invariant measure $\mu^x(dy)= \rho^x(y)dy$, where
\bea\ad
\rho^x(dy)= \frac{1}{\ZZ(x)}\big[\cosh(0.5y)\big]^{-\frac{4\alpha}{\sg^2}} \exp \bigg\{\frac{2\beta\tanh(0.5x) y}{\sg^2} \bigg\},
\eea 
and $\ZZ(x)$ is the normalization constant given by 
\bea 
\ZZ(x)\ad =\int_\rr \big[\cosh(0.5y)\big]^{-\frac{4\alpha}{\sg^2}} \exp\bigg\{\frac{2\beta\tanh(0.5x)}{\sg^2}y\bigg\} dy \\
\ad = 2^{\frac{4\al}{\sg^2}}\, \text{Beta}\bigg(\frac{2\al+2\beta \tanh(0.5x)}{\sg^2}, \frac{2\al-2\beta\tanh(0.5x)}{\sg^2}\bigg).
\eea
Here $\text{Beta}(\cdot,\cdot)$ denotes the Beta function.

The \figref{fig:SK-avg-path} shows the sample path of $x_t^m, x_t$, and $\wdh x_t$ for a fixed realization of the Brownian motion with time scale parameter $\e=0.02$. The \figref{fig:SK-avg-yT} shows the sample path of fast-varying process $y_t^\e$ and the corresponding temperature $\TT(y_t^\e)$. We can see that as $\e$ gets smaller, the sample paths of $x_t^m$ and $x_t$ get closer to those of $x_t^m$ and $\wdh x$, respectively, which is consistent with our theoretical results. For $\e \in \{0.1, 0.05, 0.02, 0.01, 0.005, 0.002,0.001\}$, we compute the mean squared error $\EE|x-x^m|^2$ and $\EE|\wdh x-x^m|^2$ using $N_{\omega}=1000$ Monte Carlo samples. Here $N$ denote the number of time discretization points on $[0,T]$, and  $dt$ represents the corresponding time step. The numerical results are summarized in Table~\ref{tab:error_data} and \figref{fig:error-plot}.

\begin{figure}[ht]
\centering
\includegraphics[width=0.68\textwidth]{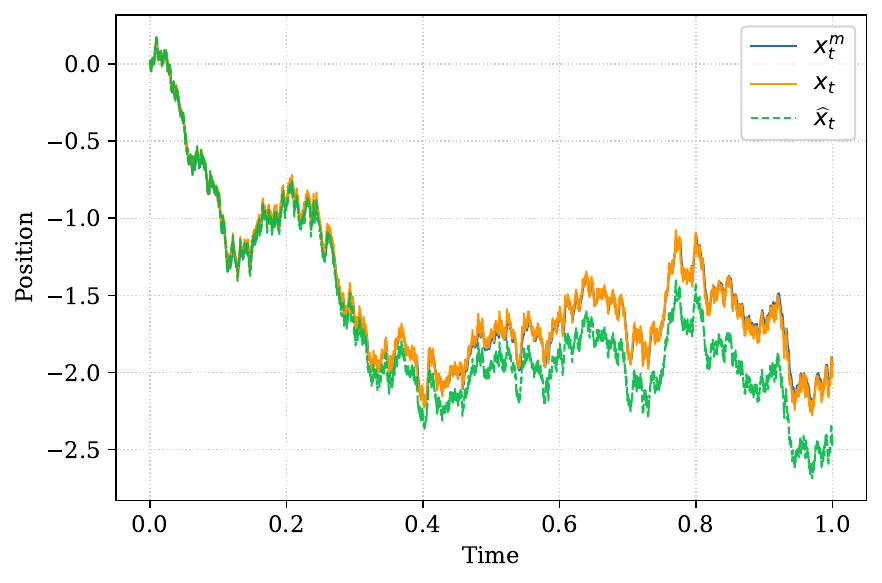}
\caption{Sample paths of $x_t^m, x_t$, and $\wdh x_t$ with $\e=0.02$.}\label{fig:SK-avg-path}

\end{figure}

\begin{figure}[ht]
\centering
\includegraphics[width=0.48\textwidth]{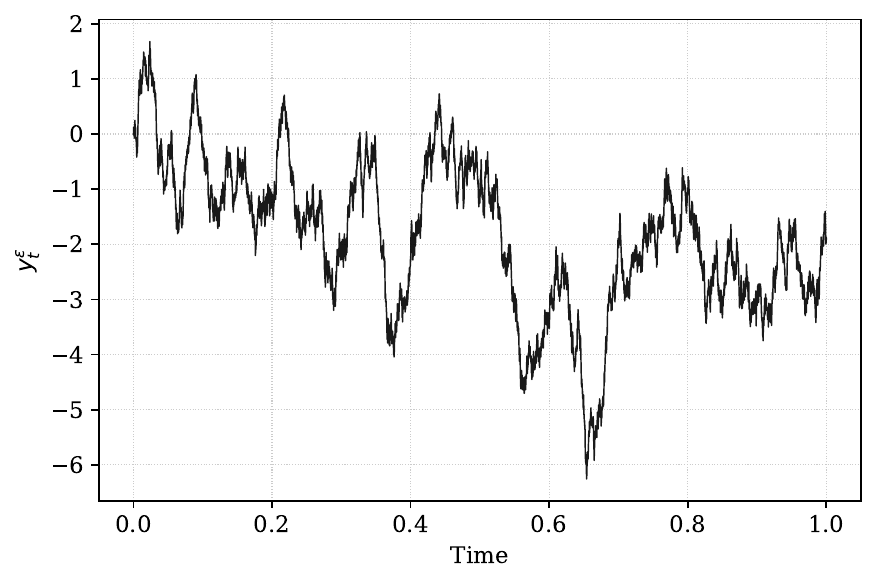}
\includegraphics[width=0.48\textwidth]{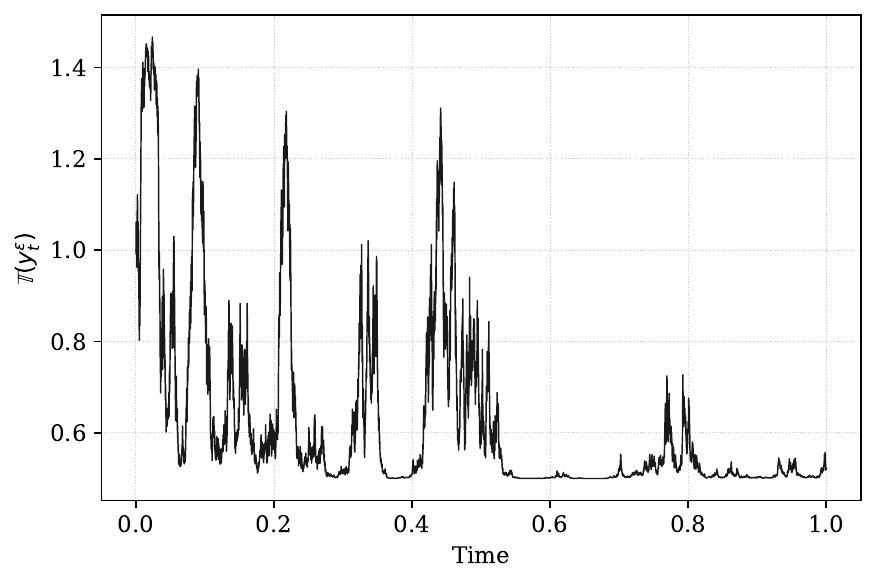}
\caption{Sample paths of $y_t^\e$ (left) and corresponding temperature $\TT(y_t^\e)$ (right) with $\e=0.02$.}
\label{fig:SK-avg-yT}
\end{figure}

\begin{table}[H]
    \centering    
    \caption{Mean Squared Error of $x-x^m$ and $\wdh x-x^m$ for various $\e$.}
    \label{tab:error_data}
    \begin{tabular}{c|c|c|c|c|}
        $\e$ & $N$ &  $dt$ &  $\EE|x-x^m|^2$ & $\EE|\wdh{x}-x^m|^2$ \\ 
\hline 
        0.100 & 10000   & 1.000000e-04 & 0.017933 & 0.017800 \\
        0.050  & 10000   & 1.000000e-04 & 0.006336 & 0.008132 \\
        0.020  & 10000   & 1.000000e-04 & 0.002021 & 0.005436 \\
        0.010 & 20000   & 5.000000e-05 & 0.000984 & 0.005185 \\
        0.005 & 80000   & 1.250000e-05 & 0.000567 & 0.005042\\
        0.002 & 500000  & 2.000000e-06 & 0.000292 & 0.005304 \\
        0.001 & 2000000 & 5.000000e-07 & 0.000189 & 0.005361
    \end{tabular}

\end{table}

\begin{figure}[H]
\centering
\includegraphics[width=0.68\textwidth]{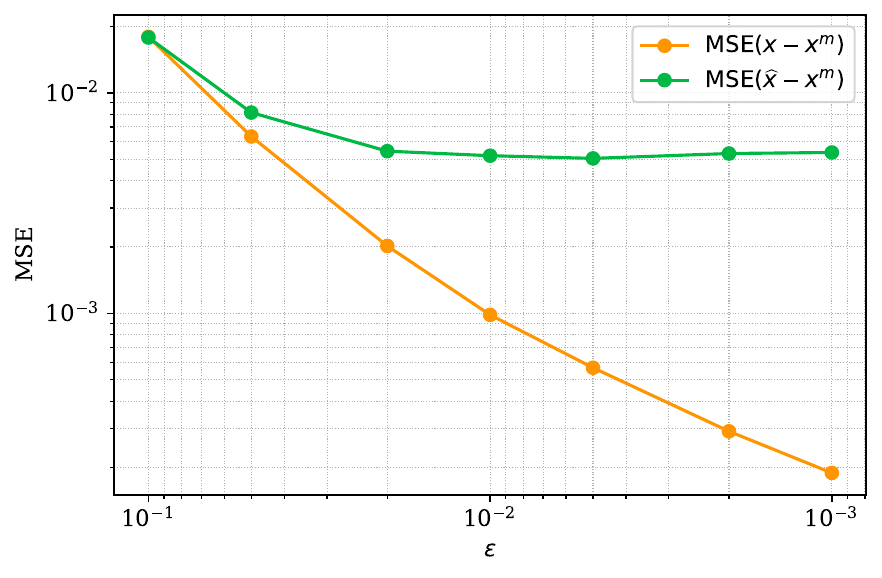}
\caption{Mean squared error log-log plot of $x-x^m$ and $\wdh x-x^m$ for $\e=0.001,0.002,0.005,0.01,0.02,0.05,0.1$.}
\label{fig:error-plot}
\end{figure}

\section{Concluding Remarks and Future Work}\label{sec:rem}
This work establishes the Smoluchowski--Kramers approximation for second-order stochastic differential equations with state-dependent friction in a nonequilibrium random environment. The environment is modeled by fast-varying stochastic differential equations, leading to a fully coupled fast--slow second-order system.

A natural direction for future research is to consider more intricate multiscale frameworks in which both the second-order system and the random environment involve additional rapidly oscillating coefficients, giving rise to homogenization effects; see \cite{RX21,RX21a}. A key question is to understand the behavior of the small-mass limit when environmental fluctuations and homogenization phenomena interact simultaneously. It is also of interest to identify the appropriate scaling under which central limit theorems hold, and to determine whether large and moderate deviation principles can be established in such regimes.

Another direction is to investigate strong convergence for the Smoluchowski--Kramers approximation in random environments, as in \cite{HLL26}. Establishing  convergence rates would provide a more refined quantitative understanding of the approximation and complement the asymptotic results developed in this work.

\appendix

\section{A Generalized Gr\"{o}nwall Inequality}\label{app:gronwall}

We recall the following generalized \gronwall inequality in \cite{CM67}.
\begin{thm}\label{thm:g-Gron}
Let the function $u$ and $f$ be continuous on the interval $[0,1]$ and let the function $\CK$ be continuous and non-negative on the triangle $0\leq s \leq t \leq 1$. If 
\beq{eq-uf}
u(t)\leq f(t)+ \int_0^t \CK(t,s) u(s)ds, \quad 0\leq t\leq 1,
\eeq
then 
\beq{gron-uf}
u(t) \leq f(t)+ \int_0^t \CH(t,s)f(s)ds, \quad 0\leq t \leq 1,
\eeq
where 
$\CH(t,s)=\sum_{n=1}^\infty \CK^{*,n}(t,s), 0\leq s \leq t \leq 1$, is the resolvent kernel, and the $\CK_n\,(n=1,2,\dots)$ are the iterated kernels of $\CK$.
\end{thm}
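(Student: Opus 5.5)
This is the classical Volterra resolvent argument: iterate the inequality \eqref{eq-uf} and show that the Neumann series of the kernel converges.

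Define the iterated kernels recursively by
\[
\CK^{*,1}=\CK,\qquad \CK^{*,n+1}(t,s)=\int_s^t \CK(t,r)\,\CK^{*,n}(r,s)\,dr .
\]
The first step is a bound on these kernels. Since $\CK$ is continuous on the compact triangle $0\le s\le t\le 1$, we have $0\le \CK\le K_0$ for some constant $K_0$. Induction then gives
\[
0\le \CK^{*,n}(t,s)\le \frac{K_0^n (t-s)^{n-1}}{(n-1)!}.
\]
Each $\CK^{*,n}$ is continuous and nonnegative, by continuity of parametric integrals. Consequently the series $\CH=\sum_{n\ge1}\CK^{*,n}$ converges uniformly on the triangle. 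Its sum is a continuous, nonnegative kernel bounded by $K_0e^{K_0}$.

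The second step is the iteration itself. Substitute \eqref{eq-uf} into its own integral term $N$ times, using nonnegativity of $\CK$ so that the inequality direction is preserved, and Fubini to reorder the nested integrals. This yields
\[
u(t)\le f(t)+\sum_{n=1}^{N}\int_0^t \CK^{*,n}(t,s)f(s)\,ds+\int_0^t \CK^{*,N+1}(t,s)u(s)\,ds .
\]
Formally this is an induction on $N$. The inductive step is to plug the inequality for $u(s)$ into the last integral and identify $\int_s^t \CK^{*,N+1}(t,r)\CK(r,s)\,dr$ with $\CK^{*,N+2}(t,s)$.

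That identification is the one delicate point. It requires the associativity of the kernel composition, namely that $\CK^{*,n+1}$ may equally be written as $\int_s^t \CK^{*,n}(t,r)\CK(r,s)\,dr$. This is proved by induction using Fubini on the simplex $s\le r_1\le r_2\le t$. Alternatively, one can avoid it altogether by defining the iterates so that the newly substituted kernel always sits on the side where the iteration adds it.

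Finally, let $N\to\infty$. The remainder term satisfies
\[
\Big|\int_0^t \CK^{*,N+1}(t,s)u(s)\,ds\Big|\le \frac{K_0^{N+1}}{N!}\,|u|_\infty\to0 ,
\]
since $u$ is bounded on $[0,1]$. Meanwhile the partial sums converge uniformly to $\CH$ and $f$ is bounded, so the sum of integrals converges to $\int_0^t \CH(t,s)f(s)\,ds$. Together these give \eqref{gron-uf}.

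I expect the main obstacle to be only the bookkeeping of the convolution order and of the factorial bound. The convergence itself is routine once the bound $K_0^n(t-s)^{n-1}/(n-1)!$ is in hand.
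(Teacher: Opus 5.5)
Your proof is correct and follows the same route as the paper. Both iterate the inequality, using $\CK\ge 0$ and Fubini to generate the iterated kernels, then show that the remainder $\int_0^t \CK^{*,N+1}(t,s)u(s)\,ds$ vanishes and the series $\sum_n \CK^{*,n}$ converges uniformly, where the paper only gestures at ``induction and a standard estimation procedure''; your factorial bound $K_0^n(t-s)^{n-1}/(n-1)!$ and your handling of the kernel composition order supply exactly the details the paper omits.
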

\begin{proof}
From \eqref{eq-uf}, one has 
\bea
u(t)\ad \leq f(t)+\int_0^t \CK(t,s) f(s)ds + \int_0^t \CK(t,s)\int_0^s \CK(s,r) u(r)dr ds \\
\aad = f(t)+ \int_0^t \CK_1(t,s)f(s) ds + \int_0^t \CK_2(t,s)u(s)ds,
\eea
for $0\leq t \leq 1$. The remainder follows by induction and a standard estimation procedure showing the resulting series to be uniformly convergence.
\end{proof}
\begin{lem}\label{lem:CH}
For $\CK:[0,T]\to [0,\infty)$ be locally integrable and define convolution powers
\bea\ad 
\CK^{*,1}=\CK,\quad \CK^{*,n+1}=\int_0^t \CK(t-s)\CK^{*,n}(s)ds
\eea
Then the following classical estimate for resolvent kernel holds
\bea\ad
\sum_{n=1}^\infty \CK^{*,n}(t) \leq \CK(t)\exp\bigg(\int_0^t \CK(s)ds \bigg).
\eea
\end{lem}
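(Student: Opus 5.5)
The approach is to compare each convolution power with an explicit majorant and then sum the series. The natural target is the pointwise bound
\begin{equation*}
\CK^{*,n}(t)\le \CK(t)\,\frac{1}{(n-1)!}\Big(\int_0^t \CK(s)\,ds\Big)^{n-1},\qquad n\ge 1 .
\end{equation*}
Summing over $n$ gives $\sum_{n\ge1}\CK^{*,n}(t)\le \CK(t)\exp\big(\int_0^t\CK(s)ds\big)$ by the exponential series, which is exactly the claim. Write $\Lambda(t):=\int_0^t\CK(s)ds$. Note that $\Lambda$ is absolutely continuous, since $\CK$ is locally integrable, with $\Lambda'=\CK$ a.e. Also $\int_0^t \CK(s)\Lambda(s)^{n-1}ds=\Lambda(t)^n/n$.

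The induction proceeds as follows. The case $n=1$ is trivial. For the step, I would use the symmetric form of the convolution, $\CK^{*,n+1}(t)=\int_0^t \CK^{*,n}(t-s)\CK(s)\,ds$, and insert the inductive hypothesis at time $t-s$:
\begin{equation*}
\CK^{*,n+1}(t)\le \frac{1}{(n-1)!}\int_0^t \CK(t-s)\,\Lambda(t-s)^{n-1}\,\CK(s)\,ds .
\end{equation*}
To close the induction one needs to extract the factor $\CK(t)$. With that factor out, the bound $\Lambda(t-s)^{n-1}\le\Lambda(t)^{n-1}$ and $\int_0^t\CK(s)ds=\Lambda(t)$ give only $\Lambda(t)^n/(n-1)!$. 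To reach $1/n!$, I would instead keep the variable $r=t-s$ and use $\int_0^t \CK(r)\Lambda(r)^{n-1}dr=\Lambda(t)^n/n$ on the other factor. Either way, the only estimate needed is the kernel comparison $\CK(t-s)\le \CK(t)$, or its dual $\CK(s)\le\CK(t)$, for $0\le s\le t$.

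This comparison is the main obstacle, and I do not expect it to hold for an arbitrary locally integrable $\CK$. For the decreasing kernel $\CK(t)=ae^{-bt}$ used in the proof of \lemref{lem:mv}, the resolvent is $ae^{(a-b)t}$. A second-order expansion at $t=0$ shows that this exceeds $\CK(t)e^{\Lambda(t)}$ for small $t>0$. So the lemma as literally worded cannot be proved in full generality, and what the induction actually yields is the following. Replace $\CK(t)$ in the majorant by $\CK^\sharp(t):=\operatorname{ess\,sup}_{0\le s\le t}\CK(s)$, and run the same induction with the hypothesis $\CK^{*,n}(t)\le \CK^\sharp(t)\Lambda(t)^{n-1}/(n-1)!$. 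The step then goes through because $\CK(t-s)\le\CK^\sharp(t-s)\le \CK^\sharp(t)$. This gives
\begin{equation*}
\sum_{n=1}^\infty\CK^{*,n}(t)\le \CK^\sharp(t)\exp\Big(\int_0^t\CK(s)ds\Big),
\end{equation*}
which coincides with the stated estimate whenever $\CK(t)=\CK^\sharp(t)$, for instance when $\CK$ is nondecreasing.

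For the application in \lemref{lem:mv} this weaker form suffices. There $\CK^\sharp\equiv 2/m$ and $\Lambda(t)\le 1/\la_0$, so $\int_0^t\CH(t,s)ds\le (2t/m)e^{1/\la_0}$. For a bound uniform in $m$ one should instead use the exact resolvent $ae^{(a-b)t}$ with $a=2/m$, $b=2\la_0/m$. This gives $\int_0^t \CH\,ds\le a/(b-a)=1/(\la_0-1)$, uniformly in $m$, provided $\la_0>1$, or after adjusting the Young's-inequality constant in \eqref{bv} so that the kernel coefficient is smaller than the decay rate. I would present the induction above as the proof and flag this correction.
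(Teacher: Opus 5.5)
You are right that the statement is false as written, and your counterexample is correct. With $\mathcal{K}(t)=ae^{-bt}$ one has $\mathcal{K}^{*,n}(t)=a^n e^{-bt}t^{n-1}/(n-1)!$, so the sum is $ae^{(a-b)t}$. Since $\frac ab(1-e^{-bt})<at$, the sum exceeds $\mathcal{K}(t)e^{\Lambda(t)}$ for every $t>0$, not only for small $t$. The paper's induction fails exactly where you locate the difficulty. It bounds $\int_0^t\mathcal{K}(t-s)\mathcal{K}(s)\,ds$ by citing its claim for $n=2$, which is never established (only $n=1$ is checked). That claim is also false for this kernel, since $a^2te^{-bt}>ae^{-bt}\cdot\frac ab(1-e^{-bt})$. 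Even granting it, once $\Lambda(s)^{n-1}$ has been replaced by $\Lambda(t)^{n-1}$ the argument only gives $\mathcal{K}(t)\Lambda(t)^n/(n-1)!$, so the asserted $1/n!$ is unjustified. Your device of keeping $\mathcal{K}(r)\Lambda(r)^{n-1}$ together and using $\int_0^t\mathcal{K}(r)\Lambda(r)^{n-1}dr=\Lambda(t)^n/n$ is the correct way to gain the factor $1/n$. For nondecreasing $\mathcal{K}$, your first induction is a complete proof.

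The gap is in the corrected statement you propose to present. The inequality $\sum_n\mathcal{K}^{*,n}(t)\le\mathcal{K}^\sharp(t)\exp\big(\int_0^t\mathcal{K}(s)ds\big)$ is false, and the inductive step does not go through. For $\mathcal{K}(t)=e^{-t}$, your hypothesis at $n=4$, $t=1$ reads $e^{-1}\le(1-e^{-1})^3$, i.e. $0.37\le 0.25$. For $\mathcal{K}(t)=2e^{-t}$ at $t=3$, the sum equals $2e^3\approx 40$ while the right-hand side is $2e^{2(1-e^{-3})}\approx 13$.

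The bound $\mathcal{K}(t-s)\le\mathcal{K}^\sharp(t)$ only extracts the outer factor. What is left is $\int_0^t\mathcal{K}^\sharp(s)\Lambda(s)^{n-1}ds$, or, with the convolution ordered the other way, $\int_0^t\mathcal{K}(s)\Lambda(t-s)^{n-1}ds$. Neither is bounded by $\Lambda(t)^n/n$. The exact integration needs the factor $\mathcal{K}(r)$ itself in the bound for $\mathcal{K}^{*,n}(r)$, and the $\sharp$-hypothesis has discarded it.

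What your induction actually proves is $\mathcal{K}^{*,n}(t)\le\mathcal{K}^\sharp(t)\Lambda^\sharp(t)^{n-1}/(n-1)!$ with $\Lambda^\sharp(t):=\int_0^t\mathcal{K}^\sharp(s)ds$. Hence $\sum_n\mathcal{K}^{*,n}(t)\le\mathcal{K}^\sharp(t)e^{\Lambda^\sharp(t)}$. Take the pointwise supremum here, not the essential one, or the case $n=1$ holds only almost everywhere.

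In \lemref{lem:mv} this yields only the $m$-dependent bound $\frac 2m e^{2t/m}$, so your intermediate estimate $(2t/m)e^{1/\lambda_0}$ has no basis. The argument to present there is the one you end with: the explicit resolvent $ae^{(a-b)t}$ with $b>a$, arranged by $\lambda_0>1$ or by a weighted Young inequality in \eqref{bv}. That part is correct.
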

\begin{proof}
We will prove it by induction. For every $n \geq 1$, we claim that 
\beq{est-Cki}
\CK^{*,n}(t)\leq \frac{\CK(t)}{(n-1)!} \bigg(\int_0^t \CK(s)ds \bigg)^{n-1}.
\eeq
By induction, for $n=1$, \eqref{est-Cki} holds. Suppose \eqref{est-Cki} holds for $i\leq n$, then 
\bea
\CK^{*,n+1}(t)
\ad = \int_0^t \CK(t-s) \CK^{*,n}(s)ds \leq \int_0^t \CK(t-s) \frac{\CK(s)}{(n-1)!} \bigg(\int_0^s \CK(r)dr \bigg)^{n-1} ds
\eea
By monotonicity of the integral, we have 
\bea\ad 
\bigg(\int_0^s \CK(r)dr \bigg)^{n-1} \leq \bigg(\int_0^t \CK(r)dr \bigg)^{n-1}.
\eea
Thus,
\bea
\CK^{*,n+1}(t)\ad \leq \frac{1}{(n-1)!}\bigg(\int_0^t \CK(r)dr \bigg)^{n-1} \int_0^t \CK(t-s)\CK(s)ds \\
\aad \leq \frac{1}{n!} \bigg(\int_0^t \CK(r)dr \bigg)^{n-1}\CK(t) \int_0^t \CK(s)ds
\eea
where the last inequality follows from \eqref{est-Cki} for $n=2$. Therefore, the proof is complete.
\end{proof}

\section{Proofs of Auxiliary Lemmas}\label{app:lem-proof}

\subsection{Proof of \lemref{lem:U-tau}}\label{app:lem55}
\begin{proof}
Let us first prove \eqref{mv-W}. By the BDG inequality in \cite{KS91}, Fubini's theorem, and \eqref{MRn-bdd} in \lemref{lem:est-M}, we obtain for sufficiently small $\e$
\beq{mv-w-2}\barray\ad 
\EE \bigg( \sup_{t\in [0,T]} \Big|\int_0^t \big[M^{-1}_{R,n} \big]_{ij}(x_s^m, y_s^\e) e_1(x_s^m, y_s^\e) m (v_s^m)_i d(W_s)_j \Big| \bigg)^2  \\
\aad \leq C \int_0^T \EE \big|\big[M^{-1}_{R,n} \big]_{ij}(x_s^m, y_s^\e) e_1(x_s^m, y_s^\e) m(v_s^m)_i \big|^2 ds \\ 
\aad \leq 2 \, C_{e_1} \int_0^T \EE \big(1+|x_s^m|^2 \big) |m (v_s^m)_i|^2 ds.
\earray\eeq

Let $K>0$ be any positive constant. Define $\Omega_1:=\{(1+|x_s^m|^2) \leq K\}$ and denote by $\Omega_1^c$ its complement. By the H\"{o}lder inequality and \eqref{0-mvs}, \eqref{4-mv} in \lemref{lem:mv}, we have
\beq{mv2-omg}\barray\ad
\EE\big\{ \big(1+|x_s^m|^2 \big) |m (v_s^m)_i|^2\big\} \\
\aad =\EE\Big\{ \big(1+|x_s^m|^2 \big) \indi_{\Omega_1} |m(v_s^m)_i|^2 \Big\} + \EE \Big\{ \big(1+|x_s^m|^2 \big) \indi_{\Omega_1^c} |m (v_s^m)_i|^2 \Big\} \\
\aad \leq K\, \EE|m(v_s^m)_i|^2 + \Big\{ \EE \Big[ \Big(1+|x_s^m|^2 \Big)^2 \indi_{\Omega_1^c}\Big] \Big\}^{1/2} \Big(\EE|m (v_s^m)_i|^4\Big)^{1/2} \\
\aad \leq K\,C_{T,\la_0,v_0} m + \sqrt{C_{T,\la_0,v_0}} \bigg\{ \EE \bigg[ \bigg(1+\sup_{s\in [0,T]} |x_s^m|^2 \bigg)^2 \indi_{\Omega_1^c} \bigg] \bigg\}^{1/2}.
\earray\eeq
The H\"{o}lder inequality and the Chebyshev inequality further yield that
\beq{omg-c}\barray\ad
\EE\bigg[ \bigg(1+\sup_{s\in [0,T]}|x_s^m|^2 
\bigg)^2 \indi_{\Omega_1^c}\bigg] \\
\aad \leq \bigg\{\EE\bigg(1+\sup_{s\in [0,T]}|x_s^m|^2 \bigg)^4 \bigg\}^{1/2} \bigg\{ \PP \bigg(1+|x_s^m|^2 > K\bigg) \bigg\}^{1/2} \\
\aad \leq \frac{C}{K} \bigg\{1+\EE \sup_{s\in [0,T]}|x_s^m|^8 \bigg\}^{1/2} \bigg\{1+\EE\sup_{s\in [0,T]}|x_s^m|^4 \bigg\}^{1/2} \leq \frac{C_{T,\la_0,v_0}}{K},
\earray\eeq
where the last line follows from \assmref{ass:mom-bdd} and $C_{T,\la_0,v_0}$ is a constant independent of $m$ and $\e$. Thus, combining \eqref{omg-c} and \eqref{mv2-omg}, letting $\e \to 0$ and then $K\to \infty$, we have
\bea\ad
\int_0^T \EE(1+|x_s^m|^2)|m(v_s^m)_i|^2 ds \leq C_{e_1} T \Big(K C_{T,\la_0,v_0}m+\sqrt{C_{T,\la_0,v_0}}/\sqrt{K} \Big) \to 0, 
\eea
which implies \eqref{mv-W}.

For \eqref{inner-mv}, 
the linear growth of $e_1(x,y)$ and \eqref{MRn-bdd} imply that for sufficiently small $\e>0$,
\bea\ad
\big|\big[M^{-1}_{R,n} \big]_{ij}(x_s^m, y_s^\e) e_1(x_s^m, y_s^\e) m (v_s^m)_i \big| \leq 2\, C_{e_1} (1+|x_s^m|)|m(v_s^m)_i|.
\eea 
The Cauchy-Schwarz inequality and the Fubini's theorem yield that
\beq{M-e1}\barray\ad 
\EE \bigg[\sup_{t\in [0,T]} \bigg|\int_0^t \big[M^{-1}_{R,n} \big]_{ij}(x_s^m, y_s^\e) e_1(x_s^m, y_s^\e) m (v_s^m)_i ds \bigg|^2\bigg]\\
\aad \leq  C_{T,e_1} \int_0^T  \EE(1+|x_s^m|)^2|m(v_s^m)_i|^2 ds \to 0, \quad \text{as } m\to 0,
\earray\eeq
where the last line follows from the argument of \eqref{mv-w-2}.

In the light of \eqref{ev-B}, the BDG inequality implies
\bea\ad 
\EE \bigg[\bigg(\sup_{t\in [0,T]} \bigg| \sqe \int_0^t \big[M^{-1}_{R,n} \big]_{ij}(x_s^m, y_s^\e) e_2(x_s^m, y_s^\e) \sqrt{m} (v_s^m)_\iota d(B_s)_\ell \bigg|\bigg)^2 \bigg] \\
\aad \leq \e \int_0^T \EE  \big|[M_{R,n}^{-1}]_{ij}(x_s^m, y_s^\e) e_2(x_s^m, y_e^\e) \sqrt{m} (v_s^m)_\iota \big|^2 ds \\
\aad \leq 2 C_{e_2} \e \int_0^T \EE \big|\sqrt{m} (v_s^m)_{\iota} \big|^2 ds \leq 2 T C_{e_2} C_{T,\la_0, |v_0|} \e \to 0.
\eea
Therefore, the proof of this lemma is complete.
\end{proof}

\subsection{Proof of \lemref{lem:U-dl}}\label{app:lem56}
\begin{proof}
For \eqref{M-I-ds}, \lemref{lem:est-M} implies 
\bea\ad
\EE\bigg|\sup_{t\in [0,T]} \int_0^t \Big([M_{R,n}^{-1}]_{ij}(x_s^m, y_s^\e)(x_s^m, y_s^\e) -\dl_{ij} \Big) e_1(x_s^m, y_s^\e) ds \bigg|^2 \\
\aad \leq C_T\, \EE \int_0^T \big|[M_{R,n}^{-1}]_{ij}(x_s^m, y_s^\e)-\dl_{ij} \big|^2 |e_1(x_s^m, y_s^\e)|^2 ds \\
\aad \leq C_{T,e_1} \frac{(\sqrt{m}C_R)^2 }{ (1-\sqrt{m}C_R)^2} \EE \int_0^T 1+|x_s^m|^2 ds \\
\aad \leq C_{T,e_1}  \frac{(\sqrt{m}C_R)^2 }{ (1-\sqrt{m}C_R)^2} \EE \bigg(1+\sup_{s\in [0,T]}|x_s^m|^2 \bigg) \to 0 \quad \text{ as } \e  \to 0.
\eea
Similarly, the BDG inequality implies
\bea\ad
\EE\bigg|\sup_{t\in [0,T]} \int_0^t \Big( [M_{R,n}^{-1}]_{ij}(x_s^m, y_s^\e)-\dl_{ij} \Big) e_1(x_s^m, y_s^\e) dW_s \bigg|^2 \\
\aad \leq \EE\int_0^T \big|[M_{R,n}^{-1}]_{ij}(x_s^m, y_s^\e)-\dl_{ij}\big|^2 |e_1(x_s^m, y_s^\e)|^2 ds \to 0, \quad \text{ as } \e \to 0.
\eea
Consequently, we complete the proof of this lemma.
\end{proof}

\subsection{Proof of \lemref{lem:U-phi-psi}}\label{app:lem57}
\begin{proof}
In terms of \eqref{e3n-B}, by the BDG inequality, statement (i) in \thmref{thm:Poi}, and the uniform boundedness of $g$, we obtain that as $\e\to 0$,
\bea\ad 
\EE\bigg|\sup_{t\in [0,T]} \sqe \int_0^t \big[M^{-1}_{R,n} \big]_{ij}(x_s^m, y_s^\e)\frac{\partial}{\partial y_\ell} \phi_{i}^{R,n}(x_s^m, y_s^\e) \big[g(x_s^m, y_s^\e)dB_s \big]_\ell \bigg|^2 \\
\aad \leq \e \, \EE \int_0^T  \Big|\big[M^{-1}_{R,n} \big]_{ij}(x_s^m, y_s^\e) \frac{\partial}{\partial y_\ell} \phi_{i}^{R,n}(x_s^m, y_s^\e)g_{\ell \iota}(x_s^m, y_s^\e) \Big|^2 ds  \\
\aad \leq  4 C_0^2(R) C_T\, \e \to 0.
\eea
For \eqref{d-e3n}, we split it into two terms
\bea\ad 
\e \int_0^t [M_{R,n}^{-1}]_{ij}(x_s^m, y_s^\e) d\big[\phi_{i}^{R,n}(x_s^m, y_s^\e)\big] \\
\aad = \e \int_0^t \Big([M_{R,n}^{-1}]_{ij}(x_s^m, y_s^\e) -\dl_{ij}\Big) d\big[\phi_{i}^{R,n}(x_s^m, y_s^\e)\big]+ \e \int_0^t d\big[\phi_{j}^{R,n}(x_s^m, y_s^\e)\big] \\
\aad =: \CE_1^\e(t)+ \CE_2^\e(t).
\eea
By the boundedness of $\phi_{j}^{R,n}$ in \thmref{thm:Poi}, for each fixed $R>0$, we have 
\bea\disp
\lim_{\e \to 0} \EE \bigg| \sup_{t\in [0,T]} \CE_2^\e(t) \bigg|^2  \ad  \leq \lim_{\e \to 0}\EE \Big|\sup_{t\in [0,T]}\e \Big(\phi_{j}^{R,n}(x_t^m, y_t^\e)-\phi_{j}^{R,n}(x_0^m, y_0^\e)\Big)\Big|^2 \\
\aad \leq \lim_{\e \to 0} 4 C_0^2(R)\,\e^2  = 0
\eea
To deal with $\CE_1^\e$, we apply the It\^{o} formula to $\phi_{i}^{R,n}(x_s^m, y_s^\e)$ and obtain
\bea
\CE_1^\e(t)\ad =\e
\int_0^t \Big([M_{R,n}^{-1}]_{ij}(x_s^m, y_s^\e) -\dl_{ij}\Big)
\nabla_x \phi_{i}^{R,n}(x_s^m, y_s^\e) v_s^m ds \\
\aad\quad + \e \int_0^t  \Big([M_{R,n}^{-1}]_{ij}(x_s^m, y_s^\e) -\dl_{ij}\Big) \nabla_y \phi_{i}^{R,n}(x_s^m, y_s^\e) \cdot \frac{1}{\e} f(x_s^m,y_s^\e) ds \\
\aad \quad + \e \int_0^t \Big([M_{R,n}^{-1}]_{ij}(x_s^m, y_s^\e) -\dl_{ij}\Big)  \frac{\text{Tr}(\nabla_y^2 \phi_{i}^{R,n}(x_s^m, y_s^\e) (gg^\top)(x_s^m, y_s^\e))}{2\e} ds\\
\aad \quad + \e \int_0^t \Big([M_{R,n}^{-1}]_{ij}(x_s^m, y_s^\e) -\dl_{ij}\Big)  \nabla_y \phi_{i}^{R,n}(x_s^m, y_s^\e)\cdot \frac{1}{\sqe} g(x_s^m, y_s^\e)d B_s \\
\aad =: \CE_{1,1}^\e(t)+\CE_{1,2}^\e(t) + \CE_{1,3}^\e(t)+\CE_{1,4}^\e(t).
\eea
By the $L^2(\Omega)$ estimates for $\e v_s^m$ for each $s\in [0,T]$ in \eqref{bdd-mvs} and \eqref{M-I}, we obtain
\bea\ad 
\lim_{\e \to 0}\EE \bigg|\sup_{t\in [0,T]} \CE_{1,1}^\e(t)\bigg|^2 =0.
\eea
The detail is omitted here. Employing an analogous argument with \lemref{lem:U-dl} for $\CE_{1,2}^\e(t),\CE_{1,3}^\e(t)$, and $\CE_{1,4}^\e(t)$, we are able to prove they converge to zero as $\e\to 0$ in $L^2(\Omega;C([0,T];\rr))$.

Finally, as for \eqref{e3n-CL}, the definition of $\CL_y^x$ in \eqref{CL-x} and assumptions (B1)-(B2) yield that
\bea\ad 
\EE\bigg| \sup_{t\in [0,T]} \int_0^t \CL_y^{x_s^m} e_3^R(x_s^m, y_s^\e) -\CL_y^{x_s^m} e_3^{R,n}(x_s^m, y_s^\e)ds \bigg|^2  \\
\aad \leq C \EE \bigg( \int_0^T  \sum_{\ell=1,2} |(\nabla_y^\ell e_{3,i}^R-\nabla_y^\ell \phi_{i}^{R,n})(x_s^m,\cdot)|_\infty^2 ds \bigg) \leq C_0(R) n^{-2} \to  0, \quad \text{ as } n \to \infty,
\eea
for any fixed $R>0$,  where the last line follows from \lemref{lem:mol-est}. 

Applying an argument analogous to the one used for $\phi_i^{R,n}$ to the case of $\psi_i^n$ yields similar convergence results. This completes the proof.
\end{proof}

\textbf{Acknowledgements.} The author would like to thank Professor Longjie Xie for bringing the manuscript \cite{HLL26} to our attention.
\smallskip

\textbf{Funding.}
This research did not receive any specific grant from funding agencies in the public, commercial, or not-for-profit sectors.
\medskip

\textbf{Declaration of competing interest.}
The author declares that there are no conflicts of interest regarding the publication of this paper.


\end{document}